\documentclass[11pt,a4paper]{article}

\usepackage{epsf,epsfig,amsfonts,amsgen,amsmath,amstext,amsbsy,amsopn,amsthm
%,lineno
}
\usepackage{amsmath,times,mathptmx}
\usepackage{amsfonts,amsthm,amssymb}
\usepackage{amsfonts}
\usepackage{graphics}
\usepackage{latexsym,bm}
\usepackage{amsfonts,amsthm,amssymb,bbding}
\usepackage{indentfirst}
\usepackage{graphicx}
\usepackage{color}
\usepackage[colorlinks=true,anchorcolor=blue,filecolor=blue,linkcolor=blue,urlcolor=blue,citecolor=blue]{hyperref}
\usepackage{float}
\usepackage{tikz}
%\allowdisplaybreaks[4]
%\usepackage{moresize}
\setlength{\textwidth}{150mm} \setlength{\oddsidemargin}{7mm}
\setlength{\evensidemargin}{7mm} \setlength{\topmargin}{-5mm}
\setlength{\textheight}{245mm} \topmargin -18mm

\pagestyle{myheadings} \markright{} \textwidth 150mm \textheight 235mm \oddsidemargin=1cm
\evensidemargin=\oddsidemargin\topmargin=-1.5cm

\newtheorem{thm}{Theorem}[section]

\newtheorem{ques}{Question}[section]
\newtheorem{lem}{Lemma}[section]
\newtheorem{cor}{Corollary}[section]

\newtheorem{remark}{Remark}[section]

\newtheorem{conj}{Conjecture}[section]
\newtheorem{claim}{Claim}[section]
\newtheorem{definition}{Definition}[section]

\addtocounter{section}{0}

\begin{document}
\title{Spectral extremal results on edge blow-up of graphs\footnote{Supported by the National Natural Science Foundation of China (Nos.\,12271162,\,12171066), and Natural Science Foundation of Shanghai (No. 22ZR1416300).}}
\author{ {\bf Longfei Fang$^{a,b}$},%{\bf Mingqing Zhai$^a$},
{\bf Huiqiu Lin$^{a}$}\thanks{Corresponding author: huiqiulin@126.com(H. Lin)}
\\
\small $^{a}$ School of Mathematics, East China University of Science and Technology, \\
\small  Shanghai 200237, China\\
\small $^{b}$ School of Mathematics and Finance, Chuzhou University, \\
\small  Chuzhou, Anhui 239012, China\\
}

\date{}
\maketitle
{\flushleft\large\bf Abstract}
Let ${\rm ex}(n,F)$ and ${\rm spex}(n,F)$ be the maximum size
and maximum spectral radius of an $F$-free graph of order $n$, respectively.
The value ${\rm spex}(n,F)$ is called the spectral extremal value of $F$.
Nikiforov [J. Graph Theory 62 (2009) 362--368] gave the spectral Stability Lemma,
which implies that for every $\varepsilon>0$, sufficiently large $n$ and a non-bipartite graph $H$ with chromatic number $\chi(H)$, the extremal graph for ${\rm spex}(n,H)$ can be obtained from the Tur\'{a}n graph $T_{\chi(H)-1}(n)$ by adding and deleting at most $\varepsilon n^2$ edges.
It is still a challenging problem to determine the exact spectral extremal values of many non-bipartite graphs.
Given a graph $F$ and an integer $p\geq 2$, the edge blow-up of $F$, denoted by $F^{p+1}$,
is the graph obtained from replacing each edge in $F$ by a $K_{p+1}$ where the new vertices of $K_{p+1}$ are all distinct.
In this paper, we determine the exact spectral extremal values of the edge blow-up of all non-bipartite graphs and provide the asymptotic spectral extremal values of the edge blow-up of all bipartite graphs for sufficiently large $n$,
which can be seen as a spectral version of the theorem on ${\rm ex}(n,F^{p+1})$ given by Yuan [J. Combin. Theory Ser. B 152 (2022) 379--398].
As applications, on the one hand, we generalize several previous results on ${\rm spex}(n,F^{p+1})$ for $F$ being a matching and a star for $p\geq 3$.
On the other hand, we obtain the exact values of ${\rm spex}(n,F^{p+1})$ for $F$ being
a path, a cycle and a complete graph.

\begin{flushleft}
\textbf{Keywords:} Extremal graph; Spectral radius; Edge blow-up
\end{flushleft}
\textbf{AMS Classification:} 05C35; 05C50

\section{Introduction}

\subsection{Prerequisite}
Given a graph family  $\mathcal{F}$, we call a graph \emph{$\mathcal{F}$-free}
if it does not contain any copy of $F\in \mathcal{F}$ as a subgraph.
The classic \emph{Tur\'{a}n's problem} aims to determine the maximum size in any $\mathcal{F}$-free graph of order $n$.
The aforementioned value is called the \emph{Tur\'{a}n number} of $\mathcal{F}$ and denoted by ${\rm ex}(n,\mathcal{F})$.
An $\mathcal{F}$-free graph is said to be \textit{extremal} for ${\rm ex}(n,\mathcal{F})$, if it has $n$ vertices and ${\rm ex}(n,\mathcal{F})$ edges.
Denote by ${\rm EX}(n,\mathcal{F})$ the family of extremal graphs for  ${\rm ex}(n,\mathcal{F})$.
For convenience, we use ${\rm ex}(n,F)$ (resp. ${\rm EX}(n,F)$) instead of ${\rm ex}(n,\mathcal{F})$ (resp. ${\rm EX}(n,\mathcal{F})$)
if $\mathcal{F}=\{F\}$.
As one of the earliest results in extremal graph theory, Tur\'{a}n's theorem \cite{Turan} states that
$T_p(n)$ is the extremal graph for ${\rm ex}(n,K_{p+1})$, where $T_p(n)$ denotes the complete $p$-partite graph on $n$ vertices with part sizes as equal as possible.
Many results are known in this area (see, for example, \cite{E1962,EFGG1995,EG1959,FS1975,FG2015,FS2013}).

Given a graph $G$, let $A(G)$ be its  adjacency matrix, and $\rho(G)$ be its spectral radius.
The spectral extremal value of a graph family $\mathcal{F}$, denoted by ${\rm spex}(n,\mathcal{F})$,
is the maximum spectral radius in any $\mathcal{F}$-free graph of order $n$.
An $\mathcal{F}$-free graph is said to be \textit{extremal} for ${\rm spex}(n,\mathcal{F})$, if it has $n$ vertices and its spectral radius is equal to ${\rm spex}(n,\mathcal{F})$.
Denote by ${\rm SPEX}(n,\mathcal{F})$ the family of extremal graphs for  ${\rm spex}(n,\mathcal{F})$.
For convenience, we use ${\rm spex}(n,F)$ (resp. ${\rm SPEX}(n,F)$) instead of ${\rm spex}(n,\mathcal{F})$ (resp. ${\rm SPEX}(n,\mathcal{F})$)
if $\mathcal{F}=\{F\}$.
In recent years, the investigation on ${\rm spex}(n,F)$ has become very popular (see, for example, \cite{Cioaba2,Cioaba0,Li,LIY,LIN1,LIN2,Nikiforov1,TAIT2,TAIT1,ZHAI}).
In 2022, Cioab\u{a}, Desai and Tait gave the following conjecture.

\begin{conj}\emph{(\cite{Cioaba2})}\label{conj2.1}
Let $p\geq 2$ and $F$ be any graph such that every graph in ${\rm EX}(n,F)$ is obtained from $T_p(n)$ by adding $O(1)$ edges.
Then, ${\rm SPEX}(n,F)\subseteq {\rm EX}(n,F)$ for sufficiently large $n$.
\end{conj}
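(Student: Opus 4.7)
\medskip
\noindent\textbf{Proof proposal.} Let $G^{*}\in\mathrm{SPEX}(n,F)$ with Perron eigenvector $\mathbf{x}$ and spectral radius $\rho=\rho(G^{*})$. The plan is to pass from Nikiforov's spectral Stability Lemma to a sharp structural description of $G^{*}$ via an iterated local-switching argument on $\mathbf{x}$, and then to reduce the problem to a rank-one perturbation calculation. Testing $\rho$ against any $H\in\mathrm{EX}(n,F)$, which by hypothesis is $T_{p}(n)$ plus $O(1)$ edges, gives $\rho\ge\rho(T_{p}(n))=(1-1/p)n-O(1)$. Nikiforov's lemma then yields a partition $V(G^{*})=V_{1}\cup\cdots\cup V_{p}$ with $|V_{i}|=n/p+o(n)$ and edit distance $o(n^{2})$ from $T_{p}(n)$.

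The core technical step is to amplify the $o(n^{2})$ edit distance down to $O(1)$. Normalize $\max_{v}x_{v}=1$. The eigenvector equation $\rho x_{v}=\sum_{u\sim v}x_{u}$ combined with $\rho=(1-1/p)n-O(1)$ forces each vertex of large weight to be adjacent to nearly all vertices outside its class and to essentially none inside it, while any vertex of small weight can be clone-replaced by a duplicate of a heavy vertex, an operation that strictly increases $\rho$ unless no such small-weight vertex remains. Iterating this clone-and-clean procedure while invoking $F$-freeness at each step (through the hypothesis that deleting the $O(1)$ defect edges from any $H\in\mathrm{EX}(n,F)$ kills every copy of $F$) narrows the set of atypical vertices to a bounded set $S$ of size $O(1)$, outside of which $G^{*}$ agrees with $T_{p}(n)$.

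Once $G^{*}$ is recognized as $T_{p}(n)$ plus a bounded number of extra edges sitting on or near $S$, comparing $G^{*}$ with graphs in $\mathrm{EX}(n,F)$ becomes a finite rank-one perturbation problem. A standard eigenvector computation yields $\rho(G^{*})=\rho(T_{p}(n))+\Theta(e_{\mathrm{extra}}(G^{*})/n)+O(n^{-2})$, so maximizing $\rho$ among nearly Tur\'an $F$-free graphs is, to leading order, equivalent to maximizing the number of extra edges; this forces $e(G^{*})=\mathrm{ex}(n,F)$ and hence $G^{*}\in\mathrm{EX}(n,F)$. The main obstacle lies squarely in the second step: spectral stability yields only $o(n^{2})$, and closing the gap to $O(1)$ requires one to simultaneously control the eigenvector entries of every vertex, preserve $F$-freeness under each switching operation, and leverage the precise assumption ``$\mathrm{EX}(n,F)$ equals $T_{p}(n)+O(1)$ edges'' to rule out spectrally competitive configurations that would otherwise let $G^{*}$ deviate from the Tur\'an skeleton. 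A weaker hypothesis (say, deviation $o(n)$ rather than $O(1)$) would be insufficient to conclude actual membership in $\mathrm{EX}(n,F)$, which is precisely why the conjecture is sensitive to the exact form of the Tur\'an extremal family.
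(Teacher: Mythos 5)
There is a genuine gap, and also a mismatch with what the paper actually does. First, note that the statement you are proving is stated in this paper only as Conjecture \ref{conj2.1}, quoted from Cioab\u{a}, Desai and Tait \cite{Cioaba2}; the paper offers no proof of it, remarking instead that it was completely resolved by Wang, Kang and Xue \cite{WANG}, and the paper's own theorems (Theorems \ref{thm1.001} and \ref{theorem2.2}) address the different Question \ref{prob1.1}, where the extremal graphs differ from $T_p(n)$ by $O(n)$ rather than $O(1)$ edges. So there is no in-paper argument to compare against; your proposal must stand on its own.

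It does not. Your outline is the standard one (spectral stability, then structural refinement, then a perturbative edge count), but the decisive step --- upgrading the $o(n^2)$ edit distance from Lemma \ref{lemma3.001}/\ref{lemma3.002} to an $O(1)$ description of $G^*$ --- is exactly the part you label as ``the main obstacle'' and never carry out; naming the difficulty is not the same as overcoming it. Moreover, two of the devices you invoke are unjustified as stated: (a) the ``clone-and-clean'' move of replacing a low-weight vertex by a duplicate of a heavy vertex need not preserve $F$-freeness (vertex duplication can create new copies of $F$, e.g.\ whenever $F$ has two vertices with the same neighbourhood pattern), and controlling this requires precisely the delicate local arguments of the kind carried out in Lemmas \ref{lemma3.07}--\ref{lemma3.14} of this paper or in \cite{WANG}, not a one-line appeal; and (b) you use the hypothesis in the form ``deleting the $O(1)$ defect edges from any $H\in\mathrm{EX}(n,F)$ kills every copy of $F$,'' which is not what the conjecture assumes --- the assumption is only that every extremal graph is $T_p(n)$ plus $O(1)$ edges, and extracting from this the forbidden-subgraph information needed to block spectrally competitive configurations is again the substance of the known proof. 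Finally, the concluding expansion $\rho(G^*)=\rho(T_p(n))+\Theta(e_{\mathrm{extra}}/n)+O(n^{-2})$ cannot by itself force $e(G^*)=\mathrm{ex}(n,F)$ unless you already know the extra edges sit on vertices of near-maximal eigenvector weight, which again presupposes the missing structural step. As written, the proposal is a plausible plan with its central lemma absent, not a proof.
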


The validity of Conjecture \ref{conj2.1} for $K_{p+1}$ was proved by Nikiforov \cite{Nikiforov5},
for friendship graph was proved by Cioab\u{a}, Feng, Tait and Zhang \cite{CFTZ2020}, for intersecting cliques was proved by Desai,  Kang, Li, Ni, Tait and Wang \cite{DKL2022}, and for intersecting odd cycles was proved by Li and Peng \cite{Li}.
Recently, Wang, Kang and Xue \cite{WANG} completely solved Conjecture \ref{conj2.1}.
Naturally, one may consider a variation by the following question.

\begin{ques}\label{prob1.1}
Which graph $F$ satisfying that every graph in ${\rm EX}(n,F)$ is obtained from Tur\'{a}n graph by adding $O(n)$ edges, can guarantee ${\rm SPEX}(n,F)\subseteq {\rm EX}(n,F)$ for sufficiently large $n$?
\end{ques}

Given a graph $F$ and an integer $p\geq 2$, the edge blow-up of $F$, denoted by $F^{p+1}$,
is the graph obtained from replacing each edge in $F$ by a $K_{p+1}$ where the new vertices of $K_{p+1}$ are all distinct.
Denote by $S_t$ a star on $t$ vertices and $M_{2t}$ the vertex-disjoint union of $t$ independent edges.
The validity of Question \ref{prob1.1}
for $S_{t+1}^{p+1}$ was proved by Cioab\u{a}, Feng, Tait and Zhang \cite{CFTZ2020} when $p=2$ and Desai, Kang, Li, Ni, Tait and Wang \cite{DKL2022} when $p\geq 3$,  for $M_{2t}^{p+1}$ was proved by Ni, Wang and Kang \cite{NWK2023}.
This motivates us to consider the validity of Question \ref{prob1.1} for $F^{p+1}$ with more general graphs $F$.
Yuan \cite{Y2022} showed that if $F$ is bipartite with $q<q(F)$ or $F$ is non-bipartite,
then for sufficient large $n$, every graph in ${\rm EX}(n,F^{P+1})$ is obtained from Tur\'{a}n graph by adding $O(n)$ edges (where parameters $\mu$, $\chi(F)$, $q$ and $q(F)$ with respect to $F$ alone are defined in Subsection \ref{sub1.2}, and more details can be seen in Theorems \ref{theorem3.1} and \ref{theorem3.2}).
Inspired by the work of Yuan, we give a partial answer to Question \ref{prob1.1} for $F^{p+1}$ as follows.
\begin{thm}\label{thm1.001}
Let $F$ be a graph and $p\geq \max\{4\mu-2, \chi(F)+1\}$.
If $F$ is bipartite with $q<q(F)$ or $F$ is non-bipartite,
then ${\rm SPEX}(n,F^{p+1})\subseteq {\rm EX}(n,F^{p+1})$ for sufficiently large $n$.
\end{thm}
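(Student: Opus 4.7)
The plan is to adapt the now-standard stability-plus-structure framework for spectral extremal problems, leveraging Yuan's Turán-type characterization cited as Theorems \ref{theorem3.1} and \ref{theorem3.2}. Let $G^{*}$ be a spectral extremal graph for $F^{p+1}$. Since $p\geq \chi(F)+1$, the edge blow-up $F^{p+1}$ contains $K_{p+1}$ and has chromatic number $p+1$, so Nikiforov's spectral stability lemma (quoted in the abstract) gives $|E(G^{*})\triangle E(T_{p}(n))|\leq \varepsilon n^{2}$ for any fixed $\varepsilon>0$ once $n$ is large. Combined with Yuan's theorem, which describes every member of ${\rm EX}(n,F^{p+1})$ as $T_{p}(n)$ plus $O(n)$ prescribed edges placed inside a single part, this supplies both the target structure and a workable starting estimate.

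The first analytic step is to refine stability using the Perron vector $\mathbf{x}$ of $G^{*}$. Because $\rho(G^{*})\geq \rho(T_{p}(n))\geq (1-1/p)n$, a standard argument shows that removing any vertex of ``small'' degree or small $\mathbf{x}$-weight is inconsistent with extremality; this forces $\delta(G^{*})\geq (1-1/p-o(1))n$ and yields a partition $V(G^{*})=V_{1}\cup\cdots\cup V_{p}$ with $|V_{i}|=n/p+o(n)$ on which the entries of $\mathbf{x}$ are essentially uniform within each $V_{i}$. By shifting a vertex $v\in V_{i}$ into the part maximizing $\sum_{u\in V_{j}}x_{u}$, one concludes that every vertex has all but $o(n)$ of its non-neighbors inside its own part.

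Next I would promote ``mostly complete between parts'' to ``contains $T_{p}(n)$ as a spanning subgraph.'' A missing crossing edge $uv$ costs roughly $2x_{u}x_{v}$ in the Rayleigh quotient, where both $x_{u},x_{v}$ are close to the maximum; no within-part compensation can match this under $F^{p+1}$-freeness once $p\geq 4\mu-2$ guarantees enough $K_{p+1}$ copies are automatically present in $T_{p}(n)$. Hence every additional edge of $G^{*}$ lies inside some $V_{i}$, and $F^{p+1}$-freeness together with Yuan's bound restricts their count to $O(n)$. At this point $G^{*}$ is a Turán graph decorated with $O(n)$ within-part edges, exactly the same ambient shape as the graphs in ${\rm EX}(n,F^{p+1})$.

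The main obstacle, and the crux of the proof, is the final spectral comparison: showing the decoration in $G^{*}$ must coincide with one of the Turán-extremal patterns. Since all candidates share the same vertex count and live in the same ambient Turán graph, only the arrangement of within-part edges matters. Using that the entries of $\mathbf{x}$ on the ``heavy'' part are nearly constant, the leading contribution of decoration to $\rho$ scales with its edge count, so any $G^{*}$ with strictly fewer within-part edges than the Turán optimum is beaten by an element of ${\rm EX}(n,F^{p+1})$. The subtle case is when $G^{*}$ attains the maximum edge count with a different arrangement; here the assumption $q<q(F)$ (in the bipartite regime) or the non-bipartite structure of $F$ rigidifies the admissible placements, and a second-order Perron eigenvector perturbation argument — weighting each within-part edge by the product of its endpoints' $\mathbf{x}$-entries — shows that the Turán-extremal arrangement is strictly better unless $G^{*}\in {\rm EX}(n,F^{p+1})$. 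This last rigidity step is where $p\geq 4\mu-2$ is essential.
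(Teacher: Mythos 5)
Your overall plan (Nikiforov-type spectral stability, Perron-vector refinement of the partition, then comparison against Yuan's Tur\'an-extremal family) is the same framework the paper uses to prove Theorem \ref{theorem2.2}, from which Theorem \ref{thm1.001} is deduced via Theorems \ref{theorem3.1}(ii) and \ref{theorem3.2}. However, your sketch has genuine gaps exactly where the work lies. The step ``promote to: $G^{*}$ contains $T_p(n)$ as a spanning subgraph'' is asserted, not proved, and is not how the argument can go: the paper never shows all crossing edges are present. Instead it isolates the set $W^*$ of vertices with large degree inside their own part, proves $|W^*|=q-1$ together with near-maximal eigenvector weight on $W^*$ (Lemma \ref{lemma3.10}), shows the residual within-part graphs satisfy $\Delta(G_i)\le\lambda-1$ and have $O(1)$ edges (Lemma \ref{lemma3.12}), that $e(W^*)\le e(Q_{q-1})$ (Lemma \ref{lemma3.13}, via the embedding in Claim \ref{claim3.4}), and that the parts are balanced (Lemma \ref{lemma3.14}); only then does the weighted comparison of Claim \ref{claim3.5} run, in which missing crossing edges appear as the term $e(G_m)$ and are played off against $e(G_{in})$, $e(D_{\lambda-1})$, $e(W^*)$ and $e(Q_{q-1})$ using the eigenvector estimates. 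Your claim that ``a missing crossing edge costs $2x_ux_v$ and no within-part compensation can match it'' is precisely the quantitative statement that must be proved (spectral radius is not monotone in the number of decoration edges), and your ``second-order Perron eigenvector perturbation argument'' names, but does not supply, this content.

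Two further points are wrong or missing. First, the hypothesis $p\ge 4\mu-2$ is not used to guarantee ``enough $K_{p+1}$ copies in $T_p(n)$,'' nor in the final rigidity step; it enters in Claim \ref{claim3.3}, where up to $2\mu-1$ hub vertices (and partners) must have many common neighbours outside their parts so that the split graph $F_b$, and hence $F^{p+1}$, can be embedded whenever $|W^*|\ge q$; this is the step that caps the number of near-dominating vertices at $q-1$, and without it the later comparison has no structure to compare. Second, to conclude ${\rm SPEX}(n,F^{p+1})\subseteq {\rm EX}(n,F^{p+1})$ one must verify that in the stated cases ${\rm ex}(n,F^{p+1})=e(H(n,p,q,\lambda-1,\mathcal{B}))$, equivalently ${\rm EX}(n,F^{p+1})\subseteq\mathcal{H}(n,p,q,\lambda-1,\mathcal{B})$ by \eqref{align1001}, which is exactly where the hypotheses ``$F$ bipartite with $q<q(F)$ or $F$ non-bipartite'' and Theorems \ref{theorem3.1}(ii), \ref{theorem3.2} are consumed; your proposal invokes these hypotheses only as a vague ``rigidification of admissible placements'' and never makes this identification, so even granting the structural part, the final equality $e(G^{*})={\rm ex}(n,F^{p+1})$ (and thus membership in ${\rm EX}$) is not reached.
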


\subsection{Definitions and Notations} \label{sub1.2}

To state our main theorems and related results, we first introduce some definitions and notations.
Given two disjoint vertex subsets $X,Y\subseteq V(G)$.
Let $G[X]$ be the subgraph induced by $X$, $G-X$
be the subgraph induced by $V(G)\setminus X$,
and $G[X,Y]$ be the bipartite subgraph on the vertex set $X\cup Y$
which consists of all edges with one
endpoint in $X$ and the other in $Y$.
For short, we write $e(X)=e(G[X])$ and $e(X,Y)=e(G[X,Y])$, respectively.
Let $F+H$ be the join and $F\cup H$ be the union, of $F$ and $H$, respectively.
Define $E_t$ as the empty graph on $t$ vertices.
Given a graph family  $\mathcal{H}$, set $p(\mathcal{H})=\min\{\chi(H)~|~H\in \mathcal{H}\}-1$,
where $\chi(H)$ denotes the chromatic number of $H$.
%Note that $p(\{F^{p+1}\})=p$ when $p\geq \chi(F)-1$.
To study ${\rm SPEX}(n,F^{p+1})$, we first introduce the decomposition family $\mathcal{M}(\mathcal{H})$,
which was given by Simonovits \cite{Simonovits1974}.

\begin{definition}\label{def2.1}
Given a graph family $\mathcal{H}$ with $p(\mathcal{H})=p\geq 2$,
 let $\mathcal{M}(\mathcal{H})$ be the family of minimal graphs $M$ satisfying that
 there exist an $H\in \mathcal{H}$ and a constant $t=t(\mathcal{H})$ such that
    $H\subseteq (M\cup E_t)+T_{p-1}((p-1)t)$.
We call $\mathcal{M}(\mathcal{H})$ the decomposition family of $\mathcal{H}$.
\end{definition}

This definition provides us a useful approach to characterize the extremal graph for ${\rm spex}(n,\mathcal{H})$ via the Tur\'{a}n graph $T_p(n)$,
that is, one can embed a maximal $\mathcal{M}(\mathcal{H})$-free graph into one class of $T_p(n)$ to obtain the extremal graph for ${\rm spex}(n,\mathcal{H})$.
Note that there exists an $H\in \mathcal{H}$ with $\chi(H)=p+1$.
Then $H\subseteq T_{p+1}((p+1)s)$ for some positive integer $s$.
This implies that $\mathcal{M}(\mathcal{H})$ always contains at least one bipartite graph.

Given a graph $F$,
for brevity, we write
$\mathcal{M}$ instead of $\mathcal{M}(\{F^{p+1}\})$.
A vertex split on some vertex $v\in V(F)$ is defined as follows:
replace $v$ by an independent set of size $d_F(v)$ in which each vertex is adjacent to exactly one distinct vertex in $N_F(v)$.
Let $F\nabla U$ be the graph obtained from $F$ by applying a vertex split on some vertex subset $U\subseteq V(F)$,
and let $\mathcal{F}(F)=\{F\nabla U~|~U\subseteq V(F)\}$.
It is easy to see that $F\nabla V(F)=M_{2e(F)}$ and $F\nabla\varnothing=F$.
The following lemma concerning the decomposition family $\mathcal{M}$ is due to Liu \cite{L2013}.

\begin{lem}\label{lemma3.01}\emph{(\cite{L2013})}
Given a graph $F$ with $2\leq \chi(F) \leq p-1$, we have $\mathcal{M}=\mathcal{F}(F)$.
In particular, $M_{2e(F)}\in\mathcal{M}$.
Moreover, if $M\in \mathcal{M}$, then after splitting any vertex set of $M$, the resulting graph also belongs to $\mathcal{M}$.
\end{lem}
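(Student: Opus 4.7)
The plan is to prove $\mathcal{M}=\mathcal{F}(F)$ by verifying both inclusions and then to deduce the particular and moreover statements. For $\mathcal{F}(F)\subseteq\mathcal{M}$, I would fix an arbitrary $U\subseteq V(F)$ and construct an explicit embedding of $F^{p+1}$ into $((F\nabla U)\cup E_t)+T_{p-1}((p-1)t)$ for a sufficiently large constant $t$, for instance $t=|V(F)|+|E(F)|$. Using a proper $(p-1)$-coloring $c$ of $F$, which exists since $\chi(F)\le p-1$, I would place each $v\in V(F)\setminus U$ at its copy inside $M=F\nabla U$, place each $v\in U$ at a fresh vertex of the part $P_{c(v)}$ of $T_{p-1}((p-1)t)$, and for each edge $e=uv\in E(F)$ route the $p-1$ new vertices of $K_{p+1}^e$ by sending one of them to the split copy $u_e\in V(M)$ if $u\in U$ and one to $v_e\in V(M)$ if $v\in U$, then filling the remaining new vertices into fresh vertices of the parts of $T_{p-1}((p-1)t)$ not yet used by $\phi(u)$ or $\phi(v)$. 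A case check over whether neither, one, or both of $u,v$ lie in $U$ then verifies every required edge of $K_{p+1}^e$, using the join structure, the edge of $F\nabla U$ corresponding to $e$, and the Tur\'an edges between distinct parts (where the properness of $c$ prevents $\phi(u)$ and $\phi(v)$ from landing in the same part when $u,v\in U$).

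Both the minimality of $F\nabla U$ in $\mathcal{M}$ and the reverse inclusion $\mathcal{M}\subseteq\mathcal{F}(F)$ rest on a counting observation. In any embedding of $F^{p+1}$ into $(M\cup E_t)+T_{p-1}((p-1)t)$, each clique $K_{p+1}^e$ contains at least two vertices of $V(M)$, since $\chi(K_{p+1})=p+1$ exceeds the $p-1$ colors available from $T_{p-1}((p-1)t)$ and vertices of $E_t$ are not adjacent to each other or to $M$; these two vertices must therefore span an edge of $M$, which I call the \emph{base edge} of $K_{p+1}^e$. Because distinct cliques $K_{p+1}^e$ and $K_{p+1}^{e'}$ share at most one vertex of $F^{p+1}$ (the common endpoint of $e$ and $e'$ in $F$, if any), their base edges are distinct, giving an injection $E(F)\hookrightarrow E(M)$ and thus $|E(M)|\ge|E(F)|=|E(F\nabla U)|$. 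Hence $F\nabla U$ achieves this lower bound with every edge being used by the canonical embedding, and removing any edge breaks it, so $F\nabla U$ is minimal. For the converse, given a minimal $M\in\mathcal{M}$ and an embedding $\phi$, I would define $U=\{v\in V(F):\phi(v)\notin V(M)\}$; tracking how the base edges of the cliques $K_{p+1}^e$ meet $V(M)$ exhibits a subgraph of $M$ isomorphic to $F\nabla U$, and the matching edge counts together with the subgraph-minimality of $M$ force equality, yielding $M\in\mathcal{F}(F)$.

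The moreover statement then follows easily: for $M=F\nabla U$, splitting an original vertex $v\in V(F)\setminus U$ produces $F\nabla(U\cup\{v\})$, while splitting a split-copy $u_e$ (which has degree one in $M$) leaves $M$ unchanged up to isomorphism, so iterating through any vertex subset of $M$ lands inside $\mathcal{F}(F)=\mathcal{M}$. The statement $M_{2e(F)}\in\mathcal{M}$ is the special case $U=V(F)$, for which $F\nabla V(F)$ is a perfect matching with $e(F)$ edges. The step I expect to be the main obstacle is the converse inclusion $\mathcal{M}\subseteq\mathcal{F}(F)$: one must simultaneously identify the correct $U$ from the embedding, show that the base edges of the cliques trace out an $F\nabla U$-subgraph with the right incidence pattern around split versus non-split vertices, and invoke subgraph-minimality to rule out any extraneous edges of $M$ outside that copy.
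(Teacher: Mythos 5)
The paper never proves this lemma: it is quoted directly from Liu \cite{L2013}, so there is no in-paper argument to compare yours against; judged on its own, your outline is correct and would compile into a complete proof. All three ingredients work. The embedding of $F^{p+1}$ into $((F\nabla U)\cup E_t)+T_{p-1}((p-1)t)$ via a proper $(p-1)$-coloring of $F$ is exactly where the hypothesis $\chi(F)\le p-1$ enters, and your case analysis over whether neither, one, or both endpoints of an edge lie in $U$ is the right check. In the base-edge step, state the exclusion of $E_t$ a touch more carefully: no vertex of a copy of $K_{p+1}$ can be placed in $E_t$ at all, since its $p$ clique-neighbours would then all have to lie in the $(p-1)$-partite Tur\'an part; hence each $K_{p+1}^e$ has at least two vertices in $V(M)$, which span an edge of $M$, and distinct cliques yield distinct base edges, giving $e(M)\ge e(F)$ for every admissible $M$. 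Minimality of $F\nabla U$ then follows because $F$ (hence $F\nabla U$) has no isolated vertices, so every proper subgraph has strictly fewer than $e(F)$ edges. For the converse your reduction is right and is in fact simpler than you anticipate: once the base edges exhibit a copy of $F\nabla U$ inside $M$ with $U=\{v\in V(F):\phi(v)\notin V(M)\}$ (distinctness of the new vertices of different cliques gives the required incidence pattern, and injectivity of the embedding keeps these images distinct from the $\phi(v)$'s), the first direction shows this subgraph already satisfies the defining containment, so subgraph-minimality of $M$ forces $M\cong F\nabla U$ with no further edge counting needed. The ``in particular'' and ``moreover'' claims follow as you say: $U=V(F)$ gives $M_{2e(F)}$, splitting an unsplit vertex $v$ of $F\nabla U$ yields $F\nabla(U\cup\{v\})$, and splitting a degree-one split copy changes nothing up to isomorphism, so $\mathcal{F}(F)$ is closed under further splits.
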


We need the following parameters $\beta, q, \mathcal{B}, \mu$ and $\lambda$ concerning $\mathcal{M}$ to state our main result.
A \emph{covering} of a graph is a set of vertices which meets all edges of the graph.
Set
            $$\beta=\min\{\beta(M)~|~M\in \mathcal{M}\},$$
where $\beta(M)$ is the minimum number of vertices in a covering of $M$.
An \emph{independent set} of a graph is a set of vertices any two of which are non-adjacent.
Similarly, an \emph{independent covering} of a bipartite graph is an independent set which meets all edges.
Set
            $$q=\min\{q(M)~|~M\in \mathcal{M}~~\text{is bipartite} \},$$
where $q(M)$ denotes the minimum number of vertices in an independent covering of $M$.
Clearly, $\beta\leq q$.
If $\beta=q$,
then set $\mathcal{B}=\{K_q\}$ and otherwise,
   $$\mathcal{B}=\{M[S]~|~M\in \mathcal{M},~S~\text{is a covering of}~M~\text{with}~|S|<q\}.$$

By the definition of $q$ and Lemma \ref{lemma3.01}, there exists an $U^*\subseteq V(F)$ such that
$F\nabla U^*$ is bipartite and $L_{U^*}$ is an independent covering of $F\nabla U^*$ with $|L_{U^*}|=q$.
Set $I_{U^*}:=L_{U^*}\cap (V(F)\setminus U^*)$. More precisely, $I_{U^*}$ is the set of vertices in $L_{U^*}$ which are not obtained by splitting vertices in $U^*$.
Thus, $I_{U^*}$ is an independent set of $F$.
Let $F'$ be the graph obtained from $F\nabla U^*$ by splitting all vertices in $V(F\nabla U^*)\setminus L_{U^*}$.
Then $F'$ is a star forest and $q(F')=q$.
Set $U'= V(F)\setminus I_{U^*}$.
We can further obtain that $F'\cong F\nabla U'\cong (\bigcup_{v\in I_{U^*}}S_{d_F(v)+1})\cup M_{2e(U')}$ and $q=|I_{U^*}|+e(U')$.
Let $\mathcal{U}$ be the set of vertex subsets $U\subseteq V(F)$ such that $V(F)\setminus U$ is an independent set of $F$ and $q(F\nabla U)=q$.
Clearly, $U'\in \mathcal{U}$, and hence $\mathcal{U}$ is non-empty.
For any $U\in \mathcal{U}$, since $e(F)>0$ and $V(F)\setminus U$ is an independent set of $F$,
we can see that $U$ is non-empty.
Set $$\mu=\min_{U\in \mathcal{U}}\max\{d_{F}(x)~|~x\in U\}.$$

Define $\mathcal{M}^*$ as the family of bipartite graphs $M\in \mathcal{M}$ with $q(M)=q$.
Let $$\lambda=\min_{M\in \mathcal{M}^*}\min\{d_{M}(x)~|~x\in L,~
L~\text{is an independent covering of}~M~\text{with}~|L|=q\}.$$

Given a simple graph $G$,
denote by $e(G)$ the number of edges, $\nu(G)$ the matching number, and
$\Delta(G)$ the maximum degree, respectively.
Define $f(\nu,\Delta)=\max\{e(G)~|~\nu(G)\leq \nu, \Delta(G)\leq \Delta\}$.
Chv\'{a}tal and Hanson \cite{CH1976} proved the following theorem.

\begin{thm}\emph{(\cite{CH1976})}\label{theorem2.1}
For every $\nu\geq 1$ and $\Delta\geq 1$,
   $$f(\nu,\Delta)=\nu\Delta+\left\lfloor\frac{\Delta}{2}\right\rfloor
           +\left\lfloor\frac{\nu}{\lceil\Delta/2\rceil}\right\rfloor\leq \nu \Delta +\nu.$$
\end{thm}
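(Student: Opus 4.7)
The plan is to establish the formula in two directions: an explicit construction supplying the lower bound, and an extremal-style argument supplying the matching upper bound. The auxiliary estimate $f(\nu,\Delta)\le \nu\Delta+\nu$ will then drop out of the closed-form expression by bounding the floor/ceiling correction term crudely.

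For the lower bound I would write $\nu=\lceil\Delta/2\rceil\cdot q+r$ with $0\le r<\lceil\Delta/2\rceil$ and display a graph $G_0$ on a bounded number of vertices as a disjoint union of $q$ copies of $K_{\Delta+1}$ together with a small residual piece chosen to realize matching number exactly $r$ at maximum degree $\Delta$. Since $K_{\Delta+1}$ has matching number $\lceil\Delta/2\rceil$ and $\binom{\Delta+1}{2}=\Delta\lceil\Delta/2\rceil+\lfloor\Delta/2\rfloor$ edges, a direct tally shows $\nu(G_0)\le\nu$, $\Delta(G_0)\le\Delta$, and $e(G_0)$ equals the claimed value. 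The residual piece should be chosen with care so that its edge count plus $r\Delta$ absorbs the remaining contribution from the formula; this is the only bookkeeping step in the construction.

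For the upper bound I would start with an extremal $G$ with $\nu(G)=\nu$, $\Delta(G)\le\Delta$, and $e(G)=f(\nu,\Delta)$, fix a maximum matching $M$, and split the vertex set as $A=V(M)$ and $B=V(G)\setminus A$. By maximality $B$ is independent, so every edge of $G$ lies in $G[A]$ or between $A$ and $B$. The key structural observation, which I would derive from augmenting-path arguments, is that for each matched pair $\{u,v\}\in M$ the set $N_G(\{u,v\})\cap B$ has at most one element: otherwise two distinct $b,b'\in B$ with $ub,vb'\in E(G)$ would allow $M$ to be augmented by replacing $uv$ with $\{ub,vb'\}$. Coupled with $d_G(x)\le\Delta$ for $x\in A$, this pins down a linear-in-$\nu$ bound on $e(A,B)$, while $e(G[A])$ is controlled by the same degree ceiling.

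The hard part will be turning these loose inequalities into the tight correction term $\lfloor\Delta/2\rfloor\lfloor\nu/\lceil\Delta/2\rceil\rfloor$ (which then yields the clean bound $\le\nu\Delta+\nu$ after the trivial estimate $\lfloor\Delta/2\rfloor\lfloor\nu/\lceil\Delta/2\rceil\rfloor\le\nu$). To achieve this I would proceed by induction on $\nu+\Delta$, with base cases $\nu=1$ (forcing $G$ to be a triangle or a $K_{\Delta+1}$-type cluster) and $\Delta=1$ (a matching). The inductive step would identify a dense cluster, essentially a copy of $K_{\Delta+1}$ together with its at most one distinguished $B$-neighbor, excise it, and apply the inductive hypothesis to the residual graph with parameters $(\nu-\lceil\Delta/2\rceil,\Delta)$. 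The main obstacle is verifying the existence of such a cluster in an extremal graph and ensuring that the arithmetic of floors and ceilings telescopes cleanly through the induction; this is where the structural observation about $N_G(\{u,v\})\cap B$ must be leveraged most carefully.
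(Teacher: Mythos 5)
The paper does not prove this statement at all---it is quoted from Chv\'atal--Hanson \cite{CH1976} (note that the displayed formula in the paper even contains a typo: the term $\lfloor\Delta/2\rfloor+\lfloor\nu/\lceil\Delta/2\rceil\rfloor$ should be the product $\lfloor\Delta/2\rfloor\cdot\lfloor\nu/\lceil\Delta/2\rceil\rfloor$, which is the form you correctly work with). So your plan must stand on its own, and it has two concrete flaws. First, the key structural observation in your upper bound is false: it is not true that a matched edge $uv$ satisfies $|N_G(\{u,v\})\cap B|\leq 1$. Take $G=S_{\Delta+1}$ with center $u$: the maximum matching is a single edge $uv$ to one leaf, and the remaining $\Delta-1$ leaves all lie in $B$ and are adjacent to $u$. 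What the augmenting argument actually gives is only that $u$ and $v$ cannot both have $B$-neighbors unless these coincide in one common vertex; a single endpoint may have up to $\Delta-1$ neighbors in $B$. This destroys the easy "linear-in-$\nu$" bound on $e(A,B)$ you rely on, and the genuinely hard counting needed for the exact correction term $\lfloor\Delta/2\rfloor\lfloor\nu/\lceil\Delta/2\rceil\rfloor$ is exactly what you defer ("the hard part"), so the upper bound is not established.

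Second, your lower-bound construction is wrong for odd $\Delta$. The identity $\binom{\Delta+1}{2}=\Delta\lceil\Delta/2\rceil+\lfloor\Delta/2\rfloor$ holds only for even $\Delta$; for odd $\Delta$ each copy of $K_{\Delta+1}$ falls short of the required count by $\lfloor\Delta/2\rfloor$, so disjoint cliques do not attain $f(\nu,\Delta)$. For example $f(2,3)=7$, attained by $C_5$ with two chords (an almost $3$-regular graph on $5$ vertices), whereas $K_4$ has only $6$ edges; in general the correct building blocks for odd $\Delta$ are (near-)$\Delta$-regular graphs on $\Delta+2=2\lceil\Delta/2\rceil+1$ vertices. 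This also undermines your inductive step, which excises a $K_{\Delta+1}$-cluster from an extremal graph: for odd $\Delta$ an extremal graph need not contain $K_{\Delta+1}$ at all. So both directions of the proposed proof have genuine gaps, and the inequality $f(\nu,\Delta)\leq\nu\Delta+\nu$, which is all the paper actually uses, is obtained only conditionally on the unproved exact formula.
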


Let $\mathcal{D}_{\lambda-1}$ denote the family of extremal graphs for $f(\lambda-1,\lambda-1)$ in  Theorem \ref{theorem2.1}.
In 1972, Abbott, Hanson and Sauer \cite{ AHS1972} determined $f(\lambda-1,\lambda-1)$.
Moreover, they showed that $\mathcal{D}_{\lambda-1}=\{2K_{\lambda}\}$ when $\lambda$ is odd,
and $\mathcal{D}_{\lambda-1}$ is the family of all the graphs with $2\lambda-1$ vertices,
$\lambda^2-\frac32\lambda$ edges and maximum degree $\lambda-1$ when $\lambda$ is even.

%Let $H(n,p,q)$ denote $T_p(n)$ for $q=1$, and $E_{q-1}+T_p(n-q+1)$ for $q\geq 2$.
%If $q=1$, then let $\mathcal{H}(n,p,q,\lambda-1,\mathcal{B})$ denote the family of graphs which are obtained from $T_p(n)$ by  embedding a copy of $D_{\lambda-1}\in \mathcal{D}_{\lambda-1}$ into one class of $T_p(n)$;
%If $q\geq 2$, then
%let $\mathcal{H}(n,p,q,\lambda-1,\mathcal{B})$ denote the family of graphs which are obtained from $H(n,p,q)$ by  embedding a copy of $D_{\lambda-1}\in \mathcal{D}_{\lambda-1}$ into one class of $T_p(n-q+1)$,
%and embedding a copy of $Q_{q-1}\in EX(q-1,\mathcal{B})$ into $E_{q-1}$.
%For convenience, we use $\mathcal{H}(n,p,q,\lambda-1,B)$ to denote $\mathcal{H}(n,p,q,\lambda-1,\mathcal{B})$ if $\mathcal{B}=\{B\}$.

Let $H(n,p,q)=E_{q-1}+T_p(n-q+1)$.
Denote by $\mathcal{H}(n,p,q,\lambda-1,\mathcal{B})$ the family of graphs which are obtained from $H(n,p,q)$ by  embedding a copy of $D_{\lambda-1}\in \mathcal{D}_{\lambda-1}$ into one class of $T_p(n-q+1)$,
and embedding a copy of $Q_{q-1}\in {\rm EX}(q-1,\mathcal{B})$ into $E_{q-1}$.
For convenience, we use $\mathcal{H}(n,p,q,\lambda-1,B)$ to denote $\mathcal{H}(n,p,q,\lambda-1,\mathcal{B})$ if $\mathcal{B}=\{B\}$.

\subsection{Main result and its applications}

The main result on ${\rm spex}(n,F^{p+1})$ is as follows.

\begin{thm}\label{theorem2.2}
Let $F$ be a graph and $p\geq \max\{4\mu-2, \chi(F)+1\}$, and let $H(n,p,q,\lambda-1,\mathcal{B})$ be any graph in $\mathcal{H}(n,p,q,\lambda-1,\mathcal{B})$. If $n$ is sufficiently large, then we have the following:\\
(i) If ${\rm ex}(n,F^{p+1})<e(H(n,p,q,\lambda-1,\mathcal{B}))$, then
$$\rho(H(n,p,q,0,\mathcal{B}))\leq {\rm spex}(n,F^{p+1})< \rho(H(n,p,q,\lambda-1,\mathcal{B})).$$
(ii) If ${\rm ex}(n,F^{p+1})=e(H(n,p,q,\lambda-1,\mathcal{B}))$, then%(that is, $EX(n,F^{p+1})\subseteq\mathcal{H}(n,p,q,\lambda-1,\mathcal{B})$)
 $${\rm SPEX}(n,F^{p+1})\subseteq {\rm EX}(n,F^{p+1})\subseteq \mathcal{H}(n,p,q,\lambda-1,\mathcal{B}).$$
\end{thm}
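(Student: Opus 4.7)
The plan is to derive Theorem \ref{theorem2.2} from Theorem \ref{thm1.001} combined with Yuan's Tur\'an-type characterization of ${\rm EX}(n,F^{p+1})$ (Theorems \ref{theorem3.1} and \ref{theorem3.2}), together with a Perron--Frobenius comparison. Since the hypotheses in Theorem \ref{theorem2.2} coincide with those of Theorem \ref{thm1.001}, every $G^*\in{\rm SPEX}(n,F^{p+1})$ is already edge-extremal, so the remaining task is to locate $G^*$ inside the family $\mathcal{H}$ and then compare spectral radii where needed. Part (ii) is then immediate: when ${\rm ex}(n,F^{p+1})=e(H(n,p,q,\lambda-1,\mathcal{B}))$, Yuan's theorem identifies ${\rm EX}(n,F^{p+1})\subseteq\mathcal{H}(n,p,q,\lambda-1,\mathcal{B})$, and combining with Theorem \ref{thm1.001} yields the nested inclusion.

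For the lower bound in part~(i), I would verify that $H(n,p,q,0,\mathcal{B})$ is $F^{p+1}$-free. By Definition \ref{def2.1} and Lemma \ref{lemma3.01}, any copy of $F^{p+1}$ inside the join $Q_{q-1}+T_p(n-q+1)$ must induce some member of the decomposition family $\mathcal{M}$ either in the $Q_{q-1}$-core or among the inner edges of a single Tur\'an class; the former is ruled out because $Q_{q-1}$ is $\mathcal{B}$-free by construction, and the latter because the Tur\'an classes carry no edges at all when $\lambda-1=0$. Hence $H(n,p,q,0,\mathcal{B})$ is a valid $F^{p+1}$-free witness, giving $\rho(H(n,p,q,0,\mathcal{B}))\le{\rm spex}(n,F^{p+1})$.

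For the strict upper bound, take $G^*\in{\rm SPEX}(n,F^{p+1})$. By Theorem \ref{thm1.001}, $e(G^*)\le{\rm ex}(n,F^{p+1})<e(H(n,p,q,\lambda-1,\mathcal{B}))$, so Yuan's theorem places $G^*\in\mathcal{H}(n,p,q,\lambda^*-1,\mathcal{B})$ with $\lambda^*\le\lambda-1$. Completing the embedded $D_{\lambda^*-1}$ to some $D_{\lambda-1}\in\mathcal{D}_{\lambda-1}$ realizes $G^*$ as a spanning proper subgraph of some $H^*\in\mathcal{H}(n,p,q,\lambda-1,\mathcal{B})$, so $\rho(G^*)<\rho(H^*)$ by Perron--Frobenius. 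The main obstacle is the passage from this single-representative comparison to the universal inequality $\rho(G^*)<\rho(H)$ for \emph{every} $H$ in the family, since distinct choices of $D_{\lambda-1}$ and $Q_{q-1}$ may yield non-isomorphic graphs with a priori different spectral radii. I expect to resolve this via an eigenvector perturbation analysis showing that for large $n$ every representative has spectral radius $\rho(T_p(n-q+1))+\Theta(1/n)$ with the leading correction determined only by the integer parameters $\lambda-1$ and $q$, hence uniformly strictly above $\rho(G^*)$ in the desired direction. A secondary subtlety is ensuring that the completion of $D_{\lambda^*-1}$ inside $\mathcal{D}_{\lambda-1}$ can actually be carried out, which should follow from the explicit Abbott--Hanson--Sauer description of $\mathcal{D}_{\lambda-1}$ recalled in the excerpt.
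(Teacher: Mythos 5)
Your plan is circular at its core. You propose to deduce Theorem \ref{theorem2.2} from Theorem \ref{thm1.001}, but in the paper Theorem \ref{thm1.001} is itself only a corollary of Theorem \ref{theorem2.2}(ii) combined with Theorems \ref{theorem3.1}(ii) and \ref{theorem3.2} (see the closing Remark); it has no independent proof you can invoke. The assertion that ``every $G^*\in{\rm SPEX}(n,F^{p+1})$ is already edge-extremal'' is precisely the content that must be established, and it is not even true in the setting of part (i): Theorem \ref{theorem2.2}(i) covers bipartite $F$ with $q=q(F)$, where Yuan's Theorem \ref{theorem3.1}(i) gives only the two-sided bound $e(H(n,p,q,0,\mathcal{B}))\leq {\rm ex}(n,F^{p+1})\leq e(H(n,p,q,\lambda-1,K_q))$ and does \emph{not} characterize ${\rm EX}(n,F^{p+1})$, so your step ``Yuan's theorem places $G^*\in\mathcal{H}(n,p,q,\lambda^*-1,\mathcal{B})$ with $\lambda^*\le\lambda-1$'' has no justification, and neither does the claim that $G^*$ embeds as a spanning subgraph of some $H^*\in\mathcal{H}(n,p,q,\lambda-1,\mathcal{B})$. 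Consequently the strict upper bound in (i), which is the hardest part of the theorem, is not proved; your proposed ``eigenvector perturbation analysis'' is only announced, not carried out, and it would anyway have to be applied to an arbitrary $F^{p+1}$-free graph of maximal spectral radius, about whose structure you have said nothing.

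What the paper actually does, and what your proposal is missing, is a direct stability analysis of a spectral extremal graph $G$: starting from Nikiforov's spectral stability (Lemmas \ref{lemma3.001}--\ref{lemma3.002}) one gets a near-Tur\'an partition (Lemma \ref{lemma3.03}), then one controls exceptional vertex sets $S$, $W$, $W^*$ (Lemmas \ref{lemma3.04}--\ref{lemma3.10}, showing $S=\varnothing$ and $|W^*|=q-1$ via embeddings of members of the decomposition family $\mathcal{M}$, which is where the hypothesis $p\geq 4\mu-2$ enters), bounds the inner structure by $\Delta(G_i)\leq\lambda-1$, $e(W^*)\leq e(Q_{q-1})$ and balancedness $n_1-n_p\leq 1$ (Lemmas \ref{lemma3.12}--\ref{lemma3.14}), and finally compares $\rho(G)$ with $\rho(H(n,p,q,\lambda-1,\mathcal{B}))$ through the Rayleigh quotient with the Perron vector of $G$ (Claim \ref{claim3.5}), which yields both (i) and (ii). Your verification that $H(n,p,q,0,\mathcal{B})$ is $F^{p+1}$-free (the paper simply records this as \eqref{align0001}, extracted from Yuan's proofs) is the only portion of your argument that stands, and it only gives the easy lower bound in (i). As written, the proposal does not constitute a proof of Theorem \ref{theorem2.2}.
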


%\begin{thm}\label{theorem2.6}
%Let $F$ be a bipartite graph with $\mu=1$ and $p=2$.
%If  $EX(n,F^{p+1})\subseteq\mathcal{H}(n,p,q,\lambda-1,\mathcal{B})$,
%then $$SPEX(n,F^{p+1})\subseteq\mathcal{H}(n,p,q,\lambda-1,\mathcal{B}).$$
%\end{thm}

Define a path, a cycle and a complete graph on $t$ vertices by $P_t,C_t$ and $K_{t}$, respectively.
Now we provide some applications of Theorem \ref{theorem2.2} to these special graphs.

{\bf Example 1.} $F=M_{2t}$.
Clearly, $\mathcal{M}=\{M_{2t}\}$.
Then $q=\beta=t$, $\lambda=1$, $\mu=1$, $\chi(F)=2$ and $\mathcal{B}=\{K_t\}$.
The extremal graph for ${\rm ex}(n,M_{2t}^{p+1})$ was determined
by Erd\H{o}s \cite{E1962} for $p=2$, and by Moon \cite{M1968} and Simonovits \cite{S1968} independently for general $p\geq 2$: for two integers $p,t\geq 2$ and sufficiently large integer $n$, ${\rm EX}(n,M_{2t}^{p+1})=\mathcal{H}(n,p,t,0,K_t)$.
Combining these with Theorem \ref{theorem2.2}, we have the following result.
\begin{cor}\label{cor4.1}\emph{(\cite{NWK2023})}
For two integers $t\geq 2,p\geq 3$ and sufficiently large integer $n$,
 $${\rm SPEX}(n,M_{2t}^{p+1})={\rm EX}(n,M_{2t}^{p+1})=\mathcal{H}(n,p,q,0,K_t)=\{K_{t-1}+T_p(n-t+1)\}.$$
\end{cor}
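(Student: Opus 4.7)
The plan is to apply Theorem~\ref{theorem2.2}(ii) directly to $F=M_{2t}$, after verifying the parameter values recorded in Example~1 and unpacking the family $\mathcal{H}(n,p,q,\lambda-1,\mathcal{B})$.

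First I would confirm the parameters. Since $M_{2t}$ is a disjoint union of $t$ edges, every vertex has degree $1$; splitting a degree-one vertex only adjoins isolated vertices, so by Lemma~\ref{lemma3.01} we get $\mathcal{M}=\mathcal{F}(M_{2t})=\{M_{2t}\}$. Picking one endpoint from each of the $t$ edges gives an independent minimum cover, so $\beta=q=t$, and hence $\mathcal{B}=\{K_t\}$. Every vertex in such a cover has degree exactly $1$ in $M_{2t}$, yielding $\lambda=1$; and since a vertex cover of $M_{2t}$ consists of degree-one vertices, the minimum over $\mathcal{U}$ of the maximum degree is also $1$, giving $\mu=1$. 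With $\chi(F)=2$, the threshold $\max\{4\mu-2,\chi(F)+1\}=3$ coincides exactly with the hypothesis $p\geq 3$ of the corollary.

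Next I would invoke Erd\H{o}s~\cite{E1962}, Moon~\cite{M1968} and Simonovits~\cite{S1968}, who proved that ${\rm EX}(n,M_{2t}^{p+1})=\mathcal{H}(n,p,t,0,K_t)$ for sufficiently large $n$, and in particular ${\rm ex}(n,M_{2t}^{p+1})=e(H(n,p,t,0,K_t))$. This is precisely the hypothesis of Theorem~\ref{theorem2.2}(ii) with the parameters computed above, so we obtain
$${\rm SPEX}(n,M_{2t}^{p+1})\subseteq{\rm EX}(n,M_{2t}^{p+1})\subseteq\mathcal{H}(n,p,t,0,K_t).$$

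To finish, I would observe that $\mathcal{H}(n,p,t,0,K_t)$ is a single graph. The parameter $\lambda-1=0$ means that no copy of $D_0$ is embedded into a class of $T_p(n-t+1)$, and since $K_{t-1}$ is the unique $K_t$-free graph on $t-1$ vertices attaining $\binom{t-1}{2}$ edges, the graph embedded into $E_{t-1}$ is forced to be $K_{t-1}$. Hence $\mathcal{H}(n,p,t,0,K_t)=\{K_{t-1}+T_p(n-t+1)\}$, and the three displayed inclusions collapse into the claimed chain of equalities. There is no substantive obstacle: the argument is a direct matching of definitions once Theorem~\ref{theorem2.2} is available, and the only minor point to check is that the $\mathcal{B}=\{K_t\}$ case of the extremal embedding into $E_{q-1}$ is uniquely realized by $K_{t-1}$.
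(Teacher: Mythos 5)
Your proposal is correct and follows essentially the same route as the paper: verify the parameters $q=\beta=t$, $\lambda=1$, $\mu=1$, $\chi(M_{2t})=2$, $\mathcal{B}=\{K_t\}$ (so the threshold $\max\{4\mu-2,\chi(F)+1\}=3$), quote the Erd\H{o}s--Moon--Simonovits result ${\rm EX}(n,M_{2t}^{p+1})=\mathcal{H}(n,p,t,0,K_t)$, and apply Theorem~\ref{theorem2.2}(ii), noting that ${\rm EX}(t-1,K_t)=\{K_{t-1}\}$ makes the family a single graph $K_{t-1}+T_p(n-t+1)$ so the inclusions collapse to equalities. The only cosmetic slip is the remark that splitting a degree-one vertex ``adjoins isolated vertices''---it simply returns an isomorphic copy of $M_{2t}$---but the conclusion $\mathcal{M}=\{M_{2t}\}$ is unaffected.
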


{\bf Example 2.} $F=S_{t+1}$.
Clearly, $\mathcal{M}=\{S_{t+1}, M_{2t}\}$ and $\mathcal{M}^*=\{S_{t+1}\}$.
Then $q=\beta=1$, $\lambda=t$, $\mu=1$, $\chi(F)=2$ and $\mathcal{B}=\{K_1\}$.
The extremal graph for ${\rm ex}(n,S_{k+1}^{p+1})$ was determined by
Erd\H{o}s, F\"{u}redi, Gould and Gunderson \cite{EFGG1995} for $p=2$, and by Chen, Gould, Pfender and Wei \cite{CGPW2003} for general $p\geq 3$: for two integers $p,t\geq 2$ and sufficiently large integer $n$,
 ${\rm EX}(n,S_{t+1}^{p+1})=\mathcal{H}(n,p,1,t-1,K_1)$.
Combining these with Theorem \ref{theorem2.2},
we have the following result.
%$SPEX(n,S_{t+1}^{p+1})\subseteq\mathcal{H}(n,p,1,t-1,K_1)$.
%This completes the proof of Corollary \ref{cor4.2}.

\begin{cor}\label{cor4.2}\emph{(\cite{DKL2022})}
For two integers $t\geq 2,p\geq 3$ and sufficiently large integer $n$,
 $${\rm SPEX}(n,S_{t+1}^{p+1})\subseteq{\rm  EX}(n,S_{t+1}^{p+1})=\mathcal{H}(n,p,1,t-1,K_1).$$
\end{cor}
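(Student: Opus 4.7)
The strategy is to apply Theorem \ref{theorem2.2} directly to $F=S_{t+1}$, so the work amounts to verifying the parameters listed immediately before the statement and then invoking case (ii). I would begin by computing the decomposition family. By Lemma \ref{lemma3.01}, $\mathcal{M}=\mathcal{F}(S_{t+1})$; since every leaf of $S_{t+1}$ has degree one, splitting a leaf is trivial, so the only non-equivalent outcomes are $S_{t+1}$ itself and $M_{2t}$ (obtained by splitting the center). This gives $\mathcal{M}=\{S_{t+1},M_{2t}\}$ and, restricting to bipartite members with $q(M)=q$, $\mathcal{M}^{*}=\{S_{t+1}\}$. From $\beta(S_{t+1})=q(S_{t+1})=1$ I read off $q=\beta=1$ and $\mathcal{B}=\{K_{q}\}=\{K_{1}\}$; since the center of $S_{t+1}$ is the unique size-$q$ independent covering and has degree $t$ in $S_{t+1}$, we have $\lambda=t$.

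Next I would verify that $\mu=1$. Take $U'=V(F)\setminus\{c\}$, where $c$ denotes the center of $S_{t+1}$: its complement is the (independent) center, and $F\nabla U'=S_{t+1}$ because splitting a degree-one vertex does nothing, so $q(F\nabla U')=q=1$. Hence $U'\in\mathcal{U}$ and $\max_{x\in U'}d_{F}(x)=1$, giving $\mu\leq 1$; the reverse inequality is automatic since edges of $F$ meet $U$. Combined with $\chi(F)=2$, the hypothesis $p\geq\max\{4\mu-2,\chi(F)+1\}=\max\{2,3\}=3$ of Theorem \ref{theorem2.2} matches exactly the assumption $p\geq 3$.

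Now I would invoke the cited Tur\'{a}n-type theorem of Chen, Gould, Pfender and Wei: for sufficiently large $n$, ${\rm EX}(n,S_{t+1}^{p+1})=\mathcal{H}(n,p,1,t-1,K_{1})$, and in particular
$$
{\rm ex}(n,S_{t+1}^{p+1})=e\bigl(H(n,p,1,t-1,K_{1})\bigr).
$$
This places us in case (ii) of Theorem \ref{theorem2.2}, which yields
$$
{\rm SPEX}(n,S_{t+1}^{p+1})\subseteq{\rm EX}(n,S_{t+1}^{p+1})\subseteq\mathcal{H}(n,p,1,t-1,K_{1}),
$$
and combining with the known equality on the right-hand side produces the desired chain.

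Because Theorem \ref{theorem2.2} does the heavy lifting, there is no real obstacle here; the only point that requires care is the identification of $\mu$, where one must check both that $V(F)\setminus U'$ is independent in $F$ and that vertex-splitting on $U'$ leaves the cover number $q(F\nabla U')$ equal to $q$. Once this is confirmed for the set of leaves, all other parameters follow directly from inspection of $S_{t+1}$ and $M_{2t}$, and the corollary is immediate.
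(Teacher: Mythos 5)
Your proposal is correct and follows essentially the same route as the paper: it verifies the parameters $\mathcal{M}=\{S_{t+1},M_{2t}\}$, $\mathcal{M}^*=\{S_{t+1}\}$, $q=\beta=1$, $\lambda=t$, $\mu=1$, $\chi(F)=2$, $\mathcal{B}=\{K_1\}$ exactly as in the paper's Example 2, and then combines the Chen--Gould--Pfender--Wei description of ${\rm EX}(n,S_{t+1}^{p+1})$ with Theorem \ref{theorem2.2}(ii). Your extra care in checking $U'\in\mathcal{U}$ and the bound $p\geq\max\{4\mu-2,\chi(F)+1\}=3$ is exactly the verification the paper leaves implicit.
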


{\bf Example 3.} $F=P_t$.
By Lemma \ref{lemma3.01}, $\mathcal{M}$ consists of all linear forests with $t-1$ edges.
Let $F:=\bigcup_{i=1}^{k}P_{\ell_i}$ be an arbitrary linear forests in $\mathcal{M}$.
Then $$q(F)=\sum_{i=1}^{k}\left\lfloor\frac{\ell_i}{2}\right\rfloor
     \geq\sum_{i=1}^{k}\frac{\ell_i-1}{2}=\frac{t-1}{2}.$$
If $t$ is even, then $(\frac{t}{2}-1)S_3\cup S_2\in \mathcal{M}$.
Thus, $q=\beta=t/2$, $\lambda=1$, $\mu=2$, $\chi(F)=2$ and $\mathcal{B}=\{K_{t/2}\}$.
If $t$ is odd, then $\frac{t-1}{2}S_3\in \mathcal{M}$.
Thus, $q=\beta=(t-1)/2$, $\lambda=2$, $\mu=2$, $\chi(F)=2$ and $\mathcal{B}=\{K_{(t-1)/2}\}$.
Glebov \cite{G1111} characterized the extremal graph for ${\rm ex}(n,P_t^{p+1})$,
which was generalized by Liu \cite{L2013} to edge blow-up of paths and a class of trees:
let $t,p,n$ be three integers with $t\geq 2,p\geq 3$ and $n$ sufficiently large.
Then ${\rm EX}(n,P_t^{p+1})=\mathcal{H}(n,p,\lfloor t/2\rfloor,i,K_{\lfloor t/2\rfloor})$, where $i=1$ when $t$ is odd and $i=0$ otherwise.
Combining these with Theorem \ref{theorem2.2}, we have the following result.

\begin{cor}\label{cor4.3}
Let $t,p,n$ be three integers with $t\geq 2,p\geq 6,$ and $n$ sufficiently large. Then
 $${\rm  SPEX}(n,P_t^{p+1})\subseteq{\rm  EX}(n,P_t^{p+1})=\mathcal{H}(n,p,\lfloor t/2\rfloor,i,K_{\lfloor t/2\rfloor}),$$
 where $i=1$ when $t$ is odd and $i=0$ when $t$ is even.
\end{cor}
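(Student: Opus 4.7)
The plan is to derive Corollary \ref{cor4.3} as a direct application of Theorem \ref{theorem2.2}(ii) to $F=P_t$, reading the required invariants off the paragraph preceding the statement and using Liu's Tur\'an-type theorem to rule out case (i) of Theorem \ref{theorem2.2}.

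First I would line up the relevant parameters. The Example 3 preamble has already identified $\mathcal{M}$ as the family of all linear forests on $t-1$ edges and extracted, for both parities of $t$, the invariants $\chi(F)=2$, $\mu=2$, $\mathcal{B}=\{K_{\lfloor t/2\rfloor}\}$, with $(q,\lambda)=(t/2,1)$ when $t$ is even and $(q,\lambda)=((t-1)/2,2)$ when $t$ is odd. Consequently $\max\{4\mu-2,\chi(F)+1\}=\max\{6,3\}=6$, so the hypothesis $p\geq 6$ is exactly the threshold demanded by Theorem \ref{theorem2.2}; moreover $\lambda-1$ coincides with the parity index $i\in\{0,1\}$ appearing in the statement, and $q=\lfloor t/2\rfloor$, $\mathcal{B}=\{K_q\}$.

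Next I would invoke Liu's theorem \cite{L2013}, cited in the paragraph above, according to which for sufficiently large $n$ one has
$${\rm EX}(n,P_t^{p+1})=\mathcal{H}(n,p,\lfloor t/2\rfloor,i,K_{\lfloor t/2\rfloor})=\mathcal{H}(n,p,q,\lambda-1,\mathcal{B}).$$
In particular ${\rm ex}(n,P_t^{p+1})=e(H(n,p,q,\lambda-1,\mathcal{B}))$, which is precisely the situation of Theorem \ref{theorem2.2}(ii) and excludes case (i). Applying that clause of the theorem yields
$${\rm SPEX}(n,P_t^{p+1})\subseteq {\rm EX}(n,P_t^{p+1})\subseteq \mathcal{H}(n,p,q,\lambda-1,\mathcal{B}),$$
and combining with Liu's identification of the extremal family produces the statement of the corollary.

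There is no genuine obstacle beyond what is already packaged in Theorem \ref{theorem2.2}: the spectral stability argument, the embedding of $\mathcal{D}_{\lambda-1}$ into one Tur\'an class via the Chv\'atal--Hanson estimate of Theorem \ref{theorem2.1}, and the treatment of the $\mathcal{B}$-part are all absorbed into that theorem. The only genuinely path-specific input is Liu's Tur\'an-type theorem for $P_t^{p+1}$, which both secures case (ii) and pins down the extremal family; the threshold $p\geq 6$ arises solely from $\mu=2$, which in turn reflects the fact that no vertex cover of $P_t$ (for $t\geq 3$) can consist entirely of the two leaves of the path.
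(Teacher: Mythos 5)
Your proposal is correct and follows exactly the paper's own route: the paper proves Corollary \ref{cor4.3} precisely by combining the invariants computed in Example 3 ($q=\lfloor t/2\rfloor$, $\lambda-1=i$, $\mu=2$, $\chi(F)=2$, $\mathcal{B}=\{K_{\lfloor t/2\rfloor}\}$, hence the threshold $p\geq\max\{4\mu-2,\chi(F)+1\}=6$) with Liu's result ${\rm EX}(n,P_t^{p+1})=\mathcal{H}(n,p,\lfloor t/2\rfloor,i,K_{\lfloor t/2\rfloor})$, which puts one in case (ii) of Theorem \ref{theorem2.2}. The only blemish is your closing heuristic for $\mu=2$ (for $t=3$ the two leaves of $P_3$ do cover all edges), but this aside plays no role since $\mu$ is taken from the paper's Example 3 and a smaller $\mu$ would only weaken the required bound on $p$.
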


{\bf Example 4.} $F=C_t$.
By Lemma \ref{lemma3.01}, $\mathcal{M}$ consists of all linear forests with $t$ edges and $C_t$.
Let $F:=\bigcup_{i=1}^{k}P_{\ell_i}$ be an arbitrary linear forests in $\mathcal{M}$.
Then $$q(F)=\sum_{i=1}^{k}\left\lfloor\frac{\ell_i}{2}\right\rfloor
     \geq\sum_{i=1}^{k}\frac{\ell_i-1}{2}=\frac{t}{2}.$$
We can further find that $q(F)=\lceil t/2\rceil$ when $F$ is a cycle.
If $t$ is even, then $\frac{t}{2}S_3\in \mathcal{M}$.
It follows that $q=\beta=t/2$, $\lambda=2$, $\mu=2$, $\chi(F)=2$ and $\mathcal{B}=\{K_{t/2}\}$.
If $t$ is odd, then $\frac{t-1}{2}S_3\cup S_2\in \mathcal{M}$.
It follows that $q=\beta=(t+1)/2$, $\lambda=1$, $\mu=2$, $\chi(F)=3$ and $\mathcal{B}=\{K_{(t+1)/2}\}$.
Liu \cite{L2013} showed that ${\rm  EX}(n,C_t^{p+1})=\mathcal{H}(n,p,t/2,1,K_{t/2})$ when $t$ is even and $p\geq 3$, and
${\rm  EX}(n,C_t^{p+1})=\mathcal{H}(n,p,(t+1)/2,0,K_{(t+1)/2})$ when $t$ is odd and $p\geq 4$.
Combining these with Theorem \ref{theorem2.2},
we have the following result.

\begin{cor}\label{cor4.4}
Let $t,p,n$ be three integers with $t\geq 3, p\geq 6$ and $n$ sufficiently large.
Then
 $${\rm SPEX}(n,C_t^{p+1})\subseteq{\rm  EX}(n,C_t^{p+1})=\mathcal{H}(n,p,\lfloor (t+1)/2\rfloor,i,K_{\lfloor(t+1)/2\rfloor}),$$
where $i=1$ when $t$ is even and $i=0$ when $t$ is odd.
\end{cor}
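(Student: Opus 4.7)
The plan is to obtain Corollary \ref{cor4.4} as a direct application of Theorem \ref{theorem2.2}(ii), once the parameters of the decomposition family $\mathcal{M}=\mathcal{M}(\{C_t^{p+1}\})$ are pinned down and Liu's Tur\'{a}n-type result is invoked to verify the side hypothesis. By Lemma \ref{lemma3.01}, $\mathcal{M}=\mathcal{F}(C_t)$, and a short check shows this consists of $C_t$ together with every linear forest on $t$ edges; for such a forest $\bigcup_i P_{\ell_i}$ with $\sum_i(\ell_i-1)=t$, we have $q=\sum_i\lfloor\ell_i/2\rfloor$, which is minimised when the components are as short as possible. This identifies the canonical extremal members $\frac{t}{2}S_3$ (when $t$ is even) and $\frac{t-1}{2}S_3\cup S_2$ (when $t$ is odd), matching the lower bound $q\ge\lceil t/2\rceil$ already recorded in the excerpt.

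Next I would read off the remaining parameters. Since $\Delta(C_t)=2$, any $U\in\mathcal{U}$ satisfies $\max_{x\in U}d_{C_t}(x)=2$, so $\mu=2$. For even $t$, every vertex in the canonical independent covering of $\frac{t}{2}S_3$ is a star centre of degree $2$, giving $\lambda=2$; for odd $t$, the $S_2$ component contributes a covering vertex of degree $1$, forcing $\lambda=1$. Combined with $\chi(C_t)\le 3$ and $\beta=q$ (whence $\mathcal{B}=\{K_q\}$), the inequality $p\ge\max\{4\mu-2,\chi(F)+1\}=\max\{6,\chi(C_t)+1\}$ becomes $p\ge 6$, exactly the hypothesis of the corollary.

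To land in case (ii) of Theorem \ref{theorem2.2} we need ${\rm ex}(n,C_t^{p+1})=e(H(n,p,q,\lambda-1,\mathcal{B}))$. This is supplied by Liu's theorem \cite{L2013} recalled in Example 4: ${\rm EX}(n,C_t^{p+1})=\mathcal{H}(n,p,t/2,1,K_{t/2})$ for even $t$ with $p\ge 3$, and $\mathcal{H}(n,p,(t+1)/2,0,K_{(t+1)/2})$ for odd $t$ with $p\ge 4$, both ranges covered by $p\ge 6$. Theorem \ref{theorem2.2}(ii) then yields ${\rm SPEX}(n,C_t^{p+1})\subseteq {\rm EX}(n,C_t^{p+1})$, which with $q=\lfloor(t+1)/2\rfloor$ and $\lambda-1=i$ is exactly the claimed containment. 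The only delicate point is the parameter bookkeeping, in particular ruling out smaller values of $\lambda$ coming from other minimal bipartite members of $\mathcal{M}^*$; this is a finite case check rather than a genuine obstacle, so once it is carried out the corollary follows immediately.
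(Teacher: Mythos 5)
Your proposal is correct and follows essentially the same route as the paper: use Lemma \ref{lemma3.01} to identify $\mathcal{M}(\{C_t^{p+1}\})$ as $C_t$ together with the linear forests on $t$ edges, read off $q=\lfloor(t+1)/2\rfloor$, $\beta=q$, $\mu=2$, $\lambda=2$ or $1$ according to the parity of $t$ (so $p\geq 6$ meets the hypothesis $p\geq\max\{4\mu-2,\chi(C_t)+1\}$), and then combine Liu's determination of ${\rm EX}(n,C_t^{p+1})$ with Theorem \ref{theorem2.2}(ii). The parameter bookkeeping you flag (checking no bipartite member of $\mathcal{M}^*$ gives a smaller $\lambda$) is exactly the finite verification implicit in the paper's Example 4, so there is no gap.
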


{\bf Example 5.} $F=K_t$.
Denote by $S_{k,k}$ the graph on $2k$ vertices obtained from two copies of $S_k$
by joining the centers with a new edge.
Since each bipartite graph in $\mathcal{M}$ is obtained by splitting at least $t-2$ vertices of $K_t$,
$\mathcal{M}^*=\{S_{t-1,t-1}\cup \binom{t-2}{2}K_2,S_t\cup \binom{t-1}{2}K_2\}$.
Then $q=\binom{t-2}{2}+t-1=\binom{t-1}{2}+1$, $\beta=2+\binom{t-2}{2}$, $\lambda=1$, $\mu=t-1$, $\chi(F)=t$ and $\mathcal{B}=\{K_{2}\cup \binom{t-2}{2}K_1\}$.
As an application of Theorem \ref{theorem3.2}, Yuan \cite{Y2022} showed that
 $${\rm  EX}(n,K_t^{p+1})=\mathcal{H}\left(n,p,\binom{t-1}{2}+1,0,K_{2}\cup \binom{t-2}{2}K_1\right)=\left\{H\left(n,p,\binom{t-1}{2}+1\right)\right\}$$
for $p\geq t+1\geq 4$ and sufficiently large $n$.
Combining these with Theorem \ref{theorem2.2},
we get the following.

\begin{cor}\label{cor4.5}
Let $t,p,n$ be three integers with  $t\geq 3,p\geq 4t-6,$ and $n$ sufficiently large.
Then
   $${\rm SPEX}(n,K_t^{p+1})={\rm  EX}(n,K_t^{p+1})= \left\{H\left(n,p,\binom{t-1}{2}+1\right)\right\}.$$
\end{cor}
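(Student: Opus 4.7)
The plan is to apply Theorem \ref{theorem2.2} directly to $F=K_t$, using the parameter values recorded in Example 5. The numerical crux is verifying $\mu=t-1$: because every independent set in $K_t$ has size at most one, the set $\mathcal{U}$ of admissible vertex subsets reduces to those of the form $V(K_t)\setminus\{v\}$. The alternative $U=V(K_t)$ is excluded because $F\nabla V(K_t)\cong M_{2\binom{t}{2}}$ has $q$-value $\binom{t}{2}>\binom{t-1}{2}+1=q$. Splitting all but one vertex of $K_t$ produces $S_t\cup\binom{t-1}{2}K_2\in\mathcal{M}^*$ with $q$-value exactly $q$, and since every vertex of $K_t$ has degree $t-1$ this forces $\mu=t-1$. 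Combined with $\chi(K_t)=t$, the bound $p\geq\max\{4\mu-2,\chi(F)+1\}=4t-6$ is precisely the hypothesis of the corollary (valid for $t\geq 3$, where $4t-6\geq t+1$).

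Next I would show that $\mathcal{H}(n,p,q,\lambda-1,\mathcal{B})$ collapses to a single graph. Since $\lambda=1$, we have $\lambda-1=0$, so $\mathcal{D}_0$ contains only the edgeless graph and no edges are added inside any class of the Tur\'{a}n graph. For the embedded part, note that $\mathcal{B}=\{K_2\cup\binom{t-2}{2}K_1\}$ has $2+\binom{t-2}{2}\leq\binom{t-1}{2}=q-1$ vertices for $t\geq 3$, and any graph on $q-1$ vertices carrying at least one edge automatically contains $K_2\cup\binom{t-2}{2}K_1$. Hence ${\rm EX}(q-1,\mathcal{B})=\{E_{q-1}\}$, and therefore $\mathcal{H}(n,p,q,0,\mathcal{B})=\{H(n,p,\binom{t-1}{2}+1)\}$.

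Finally, Yuan's Tur\'{a}n-type theorem cited in Example 5 states that ${\rm EX}(n,K_t^{p+1})=\{H(n,p,\binom{t-1}{2}+1)\}$ for $p\geq t+1$ and $n$ sufficiently large, so in particular ${\rm ex}(n,K_t^{p+1})=e(H(n,p,q,0,\mathcal{B}))$. Case (ii) of Theorem \ref{theorem2.2} then applies and yields
$${\rm SPEX}(n,K_t^{p+1})\subseteq{\rm EX}(n,K_t^{p+1})\subseteq\mathcal{H}(n,p,q,0,\mathcal{B})=\{H(n,p,\binom{t-1}{2}+1)\}.$$
Because the spectral extremum over the finite set of $n$-vertex graphs is always attained, ${\rm SPEX}(n,K_t^{p+1})$ is non-empty, so all three sets coincide. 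There is no real obstacle beyond the parameter verification sketched above; the entire spectral analysis is absorbed into Theorem \ref{theorem2.2}, and the corollary is effectively a bookkeeping exercise translating Yuan's counting result into its spectral analogue.
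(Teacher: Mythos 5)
Your proposal is correct and follows essentially the same route as the paper: verify the Example 5 parameters ($q=\binom{t-1}{2}+1$, $\lambda=1$, $\mu=t-1$, $\mathcal{B}=\{K_2\cup\binom{t-2}{2}K_1\}$), note that $\mathcal{H}(n,p,q,0,\mathcal{B})$ is the single graph $H(n,p,\binom{t-1}{2}+1)$, and feed Yuan's Tur\'an-type result into Theorem \ref{theorem2.2}(ii). Only a trivial slip: the inequality $2+\binom{t-2}{2}\leq\binom{t-1}{2}$ fails at $t=3$, but there $q-1=1$ and ${\rm EX}(q-1,\mathcal{B})=\{E_{q-1}\}$ holds vacuously, so the conclusion is unaffected.
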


\section{Proof of Theorem \ref{theorem2.2}}\label{section4}

In this section, we first list some theorems and lemmas that will be used in later proof of  Theorem  \ref{theorem2.2}.
The following extremal results on edge blow-up graphs are due to Yuan (\cite{Y2022}).

\begin{thm}\label{theorem3.1}\emph{(\cite{Y2022})}
Let $F$ be a bipartite graph and $p\geq3$.
If $n$ is sufficiently large, then we have the following:\\
(i) If $q=q(F)$, then $e(H(n,p,q,0,\mathcal{B}))\leq {\rm ex}(n,F^{p+1})\leq e(H(n,p,q,\lambda-1,K_q))$;\\
(ii) If $q<q(F)$, then ${\rm EX}(n,F^{p+1})=\mathcal{H}(n,p,q,0,\mathcal{B})$.
%${\rm spex}(n,F^{p+1})= \rho(H(n,p,q,0,0,\mathcal{B}))$.
\end{thm}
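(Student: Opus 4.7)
The plan is to prove Theorem \ref{theorem3.1} in three phases: an explicit construction establishing the lower bound, an application of stability to locate a near-Tur\'an skeleton in any near-extremal $F^{p+1}$-free graph, and a structural refinement driven by the decomposition family $\mathcal{M}$ together with Theorem \ref{theorem2.1}. Throughout I use that $\chi(F^{p+1})=p+1$, since $F$ contains an edge and $F^{p+1}\supseteq K_{p+1}$.

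First, for the lower bound in (i), I would show that every $H(n,p,q,0,B)\in\mathcal{H}(n,p,q,0,\mathcal{B})$ is $F^{p+1}$-free. The key point: if one could embed $F^{p+1}$ in such a graph, then taking any two colour classes of $F^{p+1}$ that host all the original edges of $F$ yields a bipartite $M\in\mathcal{M}$; one side of this $M$ would be forced into $E_{q-1}$, giving an independent covering of $M$ of size $\le q-1$, contradicting the minimality built into $q$ and $\mathcal{B}$. For the upper bound in (i), the construction $H(n,p,q,\lambda-1,K_q)$ is shown to be $F^{p+1}$-free by the same projection argument: the $\lambda-1$ extra edges inside a Tur\'an class, chosen as a graph in $\mathcal{D}_{\lambda-1}$ (so with matching number and maximum degree at most $\lambda-1$), cannot realise the minimum-degree-$\lambda$ side of any $M\in\mathcal{M}^*$, precisely by Theorem \ref{theorem2.1}.

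Second, for the upper bound I would invoke the Erd\H{o}s--Simonovits stability theorem, which applies since $\chi(F^{p+1})=p+1$. Let $G$ be an $F^{p+1}$-free graph on $n$ vertices with $e(G)\ge {\rm ex}(n,F^{p+1})$. Stability plus edge-maximality furnishes a partition $V_1\cup\cdots\cup V_p$ of $V(G)$ minimising $\sum_i e(V_i)$, with $\sum_i e(V_i)=o(n^2)$ and $\bigl||V_i|-n/p\bigr|=o(n)$. Define an exceptional set $W\subseteq V(G)$ of vertices that either (a) have $\Omega(n)$ neighbours inside their own part, or (b) miss $\Omega(n)$ vertices of some other part. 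A standard common-neighbourhood argument --- if $|W|\ge q$, locate an $M\in\mathcal{M}$ using edges incident to $W$ and bad edges, then extend to $F^{p+1}$ through the near-complete bipartite structure between parts --- forces $|W|\le q-1$.

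Third, I would analyse the edges inside $V(G)\setminus W$. Any such edge $e\subset V_i\setminus W$ can be combined with $W$ and the near-complete bipartite blocks of $G-W$ to furnish an embedding of any $M\in\mathcal{M}$ not of type $K_q$, which extends to $F^{p+1}$ unless the edges inside each part obey strict constraints. In the regime $q=q(F)$ the maximum admissible number of such edges in each part is governed by $f(\lambda-1,\lambda-1)$ (Theorem \ref{theorem2.1}), yielding the claimed two-sided bound in (i). In the regime $q<q(F)$, the family $\mathcal{B}$ encodes precisely the coverings of bipartite $M\in\mathcal{M}$ of size below $q$; any extra edge inside a Tur\'an class would, together with $W$, realise an independent covering of size $q$ of some bipartite $M\in\mathcal{M}$, producing an $F^{p+1}$. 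Hence no such edge can exist and $G\in\mathcal{H}(n,p,q,0,\mathcal{B})$, giving (ii). The main obstacle is sharpening $|W|\le q-1$ rather than $|W|=O(1)$: the exact conclusion in (ii) leaves no slack, so this step demands a careful vertex-swap/symmetrisation exploiting the minimality of $\sum_i e(V_i)$ and the sub-extremal structure of $G[W]\cong Q_{q-1}\in{\rm EX}(q-1,\mathcal{B})$.
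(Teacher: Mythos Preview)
The paper does not prove Theorem~\ref{theorem3.1}; it is quoted from Yuan~\cite{Y2022} and used as a black box in the proof of Theorem~\ref{theorem2.2}. There is therefore no proof in this paper against which to compare your proposal.

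That said, your sketch contains a logical slip worth flagging. In your first phase you write that ``for the upper bound in (i), the construction $H(n,p,q,\lambda-1,K_q)$ is shown to be $F^{p+1}$-free.'' Showing a particular graph is $F^{p+1}$-free can only ever yield a \emph{lower} bound on ${\rm ex}(n,F^{p+1})$, never an upper bound. The inequality ${\rm ex}(n,F^{p+1})\le e(H(n,p,q,\lambda-1,K_q))$ must come from an argument that \emph{every} $F^{p+1}$-free graph on $n$ vertices has at most that many edges --- which is what your phases~2 and~3 are aiming at. (Note also that part~(i) is stated as a genuine two-sided inequality, not an equality; there is no claim that $H(n,p,q,\lambda-1,K_q)$ is $F^{p+1}$-free, and in general it need not be.) So phase~1 should be devoted entirely to the lower bound; the sentence about $H(n,p,q,\lambda-1,K_q)$ should be removed or repositioned.

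Beyond this, your phases~2--3 give a plausible high-level stability-plus-cleanup outline, and you correctly identify that pinning down $|W|\le q-1$ exactly (rather than $|W|=O(1)$) and controlling the edges inside each $V_i\setminus W$ via $f(\lambda-1,\lambda-1)$ are the crux. But these steps are only named, not argued; in particular, the passage from ``$|W|\ge q$ forces an $M\in\mathcal{M}$'' to an actual embedding of $F^{p+1}$ requires building the full $(p+1)$-chromatic structure around $M$, and your sketch does not indicate how the exceptional vertices of type~(b) (those missing many cross-edges) are handled when extending $M$ to $F^{p+1}$.
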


\begin{thm}\label{theorem3.2}\emph{(\cite{Y2022})}
Let $F$ be a non-bipartite graph and $p\geq \chi(F)+1$.
If $n$ is sufficiently large, then ${\rm EX}(n,F^{p+1})=\mathcal{H}(n,p,q,0,\mathcal{B})$.
\end{thm}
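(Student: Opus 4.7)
\textbf{Proof proposal for Theorem \ref{theorem3.2}.} Since $F$ is non-bipartite with $\chi(F) \leq p-1$, Lemma \ref{lemma3.01} gives $\mathcal{M} = \mathcal{F}(F)$, so both $F$ and $M_{2e(F)}$ lie in $\mathcal{M}$, and $\chi(F^{p+1}) = p + 1$. The plan is to follow the Erd\H{o}s--Simonovits stability-to-exact paradigm. By Erd\H{o}s--Stone--Simonovits, $\mathrm{ex}(n, F^{p+1}) = (1 - 1/p + o(1))\binom{n}{2}$, and the stability theorem implies that any extremal $G$ admits a $p$-partition $V_1 \cup \cdots \cup V_p$ with $\sum_i e(V_i) = o(n^2)$ and $|V_i| = n/p + o(n)$. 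I would choose this partition to maximize the number of crossing edges, and refine it through the analysis.

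The second phase identifies an \emph{apex set}. Call $v \in V_i$ \emph{typical} if it has at least $(1 - \delta)(n - |V_i|)$ neighbors outside $V_i$, and \emph{atypical} otherwise. A standard moving/counting argument shows that the set of atypical vertices has bounded size: moving an atypical vertex into its majority class would strictly increase the crossing-edge count, unless the loss inside is larger, and extremality forbids improvement. Set $A := \{v \in V(G) : d_G(v) \geq n - C\}$ for a sufficiently large constant $C = C(F, p)$. The two key structural claims to establish are: (a) $G[A]$ is $\mathcal{B}$-free; (b) for each $i$, no edge of $G$ lies inside $V_i \setminus A$. Claim (a) is argued by the decomposition-family mechanism of Definition \ref{def2.1}: any $B = M[S] \in \mathcal{B}$ sitting inside $A$ (with $M \in \mathcal{M}$ and $|S| < q$ a covering) extends to $M$ by placing the independent set $V(M) \setminus S$ inside any Turán class $V_j$ (which is essentially independent); combined with $T_{p-1}((p-1)t)$ drawn from the remaining $p-1$ classes using the almost-complete bipartite structure between typical vertices, this realizes $(M \cup E_t) + T_{p-1}((p-1)t)$ and hence a copy of $F^{p+1}$, contradicting $F^{p+1}$-freeness. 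Claim (b) exploits that $F$ itself is non-bipartite and lies in $\mathcal{M}$: an edge $uv$ inside $V_i \setminus A$, together with apex vertices acting as external "apex-side" vertices that are joined to $V_i$, embeds $F$ across $A \cup V_i$ in the one-class side of the template, whereupon the Turán structure in the remaining $p-1$ classes completes the $F^{p+1}$.

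The third phase pins down the exact count and structure. Combining (a) and (b) with the construction-based lower bound $\mathrm{ex}(n, F^{p+1}) \geq e(H(n, p, q, 0, \mathcal{B}))$ forces $|A| = q - 1$ exactly: any smaller apex loses more edges than can be compensated, because (b) forbids edges inside Turán classes, while $|A| \geq q$ violates (a) via the bipartite $M^{\star} \in \mathcal{M}$ with $q(M^{\star}) = q$ (whose $q$-sized independent cover would fit inside $A$). Edge-count maximality then yields $G[A] \in {\rm EX}(q-1, \mathcal{B})$. Finally, all possible edges between $A$ and $V(G) \setminus A$, and all crossing edges between the $V_i$'s, must be present: any missing such edge could be added without creating an $F^{p+1}$ (since the added edge is still compatible with the structural template), contradicting extremality. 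Therefore $G \in \mathcal{H}(n, p, q, 0, \mathcal{B})$, as required.

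The main obstacle is claim (b). Establishing that a single edge inside a Turán class suffices to embed the non-bipartite $F$ into the template requires two simultaneous ingredients: that $A$ is already substantial (so $G[A \cup V_i]$ carries enough structure to realize $F$) and that the supersaturation of $T_{p-1}((p-1)t)$ in the remaining crossing graph can be performed using only typical vertices. There is an apparent circularity — the lower bound $|A| \geq q - 1 - o(1)$ must be known before (b) is proven — which I would resolve by first proving a weak version of (b) (allowing $o(n)$ edges inside classes), using that to derive $|A| = q - 1 \pm o(1)$, and then iterating to the exact statement via an edge-by-edge removal argument and a careful accounting of gains versus losses when atypical vertices are cleaned out.
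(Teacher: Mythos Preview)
The paper does not contain a proof of Theorem~\ref{theorem3.2}. It is quoted verbatim from Yuan~\cite{Y2022} and used as a black box (together with Theorem~\ref{theorem3.1}) in the proof of Theorem~\ref{theorem2.2}; see the opening of Section~\ref{section4}. So there is no ``paper's own proof'' to compare your proposal against.

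That said, your outline follows the standard stability-to-exact template that Yuan's argument in \cite{Y2022} also uses, and steps (a), the size pinning $|A|=q-1$, and the final edge-maximality cleanup are essentially correct in spirit. The weak point is your justification of claim~(b). You write that a single edge $uv$ inside $V_i\setminus A$ ``embeds $F$ across $A\cup V_i$ in the one-class side of the template,'' but this does not follow from $F\in\mathcal{M}$ alone: $G[A\cup V_i]$ is, up to the one extra edge $uv$, the join of a small $\mathcal{B}$-free graph on $q-1$ vertices with a large independent set, and there is no a~priori reason an arbitrary non-bipartite $F$ embeds in such a structure. The actual mechanism in Yuan's proof is not ``embed $F$ directly'' but rather a cover-based argument: one shows that the $q-1$ apex vertices together with the endpoint $u$ (or the edge $uv$) realise a covering set $S$ of some $M\in\mathcal{M}$ with $|S|\le q$ whose induced pattern $M[S]$ is available in $G[A\cup\{u,v\}]$, and the non-bipartiteness of $F$ is what guarantees such an $M$ and $S$ exist with the right shape. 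Your sketch conflates the roles of $F$, of bipartite members of $\mathcal{M}^*$, and of $\mathcal{B}$ at this step, and the circularity you flag at the end is a symptom of not having isolated the correct combinatorial lemma. If you want to reconstruct Yuan's proof, this is the place to look carefully.
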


Moreover, from the proofs of Theorems \ref{theorem3.1} and \ref{theorem3.2}, we can see that
\begin{equation}\label{align0001}
H(n,p,q)~\text{and}~H(n,p,q,0,\mathcal{B})~\text{are}~F^{p+1}\text{-free~for~any}~n,
\end{equation}
and
\begin{equation}\label{align1001}
{\rm ex}(n,F^{p+1})\leq e(H(n,p,q,\lambda-1,\mathcal{B}))~\text{for sufficiently large}~n~\text{and}~p\geq \chi(F)+1,
\end{equation}
with equality if and only if ${\rm EX}(n,F^{p+1})\subseteq\mathcal{H}(n,p,q,\lambda-1,\mathcal{B})$.

The following is the spectral version of the Stability Lemma due to Nikiforov \cite{Nikiforov4}.

\begin{lem}\label{lemma3.001}\emph{(\cite{Nikiforov4})}
Let $p\geq 2$, $\frac{1}{\ln n}<c<p^{-8(p+21)(p+1)}$, $0<\varepsilon<2^{-36}p^{-24}$, and $G$ be an $n$-vertex graph.
If $\rho(G)>(\frac{p-1}{p}-\varepsilon)n$, then one of the following holds:\\
(i) $G$ contains a complete $(p+1)$-partite graph $K_{\lfloor c\ln n\rfloor, \dots,\lfloor c\ln n\rfloor,\lceil n^{1-\sqrt{c}}\rceil}$;\\
(ii) $G$ differs from $T_p(n)$ in fewer than $(\varepsilon^{\frac{1}{4}}+c^{\frac{1}{8p+8}})n^2$ edges.
\end{lem}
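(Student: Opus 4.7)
The plan is to follow Nikiforov's strategy of reducing this spectral stability statement to the combinatorial Erd\H{o}s--Simonovits stability theorem, via (a) a spectral-to-edge-density translation, (b) a cleaning step producing an almost-regular dense subgraph, (c) a Kov\'ari--S\'os--Tur\'an-style blow-up extraction, and (d) a final invocation of combinatorial stability.

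First I would convert the spectral hypothesis $\rho(G)>(\tfrac{p-1}{p}-\varepsilon)n$ into a near-Tur\'an edge count. The classical Wilf-type inequality $\rho(G)^2\le 2e(G)(1-1/\omega(G))$ combined with trivial bounds already gives $e(G)\ge \bigl(1-\tfrac{1}{p}-O(\varepsilon)\bigr)\binom{n}{2}$, so $G$ has almost the Tur\'an number of edges. Next, a standard cleaning procedure extracts a subgraph $G'$ on $(1-o(1))n$ vertices with minimum degree at least roughly $(1-\tfrac{1}{p}-\sqrt{\varepsilon})n$ and $\rho(G')\ge \rho(G)-o(n)$: iteratively delete any vertex of too-low degree; each deletion decreases $\rho$ by at most the deleted degree, so too many deletions would contradict the spectral hypothesis. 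Applying Erd\H{o}s's supersaturation theorem to $G'$ then produces at least $\delta n^{p+1}$ copies of $K_{p+1}$ for some constant $\delta=\delta(\varepsilon)>0$.

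Now I would run an iterated Kov\'ari--S\'os--Tur\'an / Erd\H{o}s blow-up argument on these copies: a $(p+1)$-fold pigeonhole over common neighbourhoods of $K_{p+1}$-vertices either locates the advertised complete multipartite subgraph $K_{\lfloor c\ln n\rfloor,\ldots,\lfloor c\ln n\rfloor,\lceil n^{1-\sqrt c}\rceil}$, giving conclusion (i), or shows that the $K_{p+1}$-copies are sufficiently spread out that no such heavy concentration exists. In the latter case, combining the edge count from the first step with the quantitative Erd\H{o}s--Simonovits stability theorem for $K_{p+1}$ forces $G$ to differ from $T_p(n)$ in fewer than $(\varepsilon^{1/4}+c^{1/(8p+8)})n^2$ edges, giving conclusion (ii).

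The main obstacle is the careful bookkeeping of the quantitative losses across the four steps. Each transition (spectral-to-edge, cleaning, supersaturation, blow-up extraction, stability) contributes a small polynomial error, and these have to be balanced so that the final discrepancy takes the exact form $\varepsilon^{1/4}+c^{1/(8p+8)}$. Specifically, the exponent $\tfrac{1}{4}$ reflects the standard discrepancy exponent produced by Erd\H{o}s--Simonovits stability from an edge-surplus of order $\varepsilon n^2$, while the exponent $\tfrac{1}{8p+8}$ arises through the Kov\'ari--S\'os--Tur\'an phase, which controls how the threshold $c\ln n$ and the distinguished part of size $n^{1-\sqrt c}$ trade off against the density of $K_{p+1}$-copies. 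Making these two losses add up to precisely the claimed bound, rather than a strictly weaker one, is the technical heart of the proof.
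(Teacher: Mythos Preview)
The paper does not prove this lemma at all: it is quoted verbatim from Nikiforov \cite{Nikiforov4} and used as a black box (indeed, only its consequence, Lemma~\ref{lemma3.002}, is actually invoked later). So there is no ``paper's own proof'' to compare your proposal against.

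As for the proposal itself, your outline is in the right spirit of Nikiforov's original argument, but a couple of the moves need tightening. First, the Wilf-type inequality $\rho(G)^2\le 2e(G)(1-1/\omega(G))$ only gives an edge lower bound once you know $\omega(G)\le p$; when $\omega(G)\ge p+1$ you are already in the regime where you must hunt for the blown-up $K_{p+1}$ directly, so the two alternatives of the lemma are really entangled from the start rather than arising only at the end. Second, the step ``iteratively delete low-degree vertices; each deletion decreases $\rho$ by at most the deleted degree'' is not correct as stated (deleting a vertex can change $\rho$ by more or less than its degree), and in Nikiforov's actual proof the cleaning and the spectral information are handled through his earlier quantitative results on $K_{p+1}$-counts in graphs with large spectral radius, not by a naive minimum-degree trimming. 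Finally, the precise exponents $\tfrac14$ and $\tfrac{1}{8p+8}$ are outputs of Nikiforov's specific chain of inequalities; your narrative about where they ``come from'' is heuristic and would not by itself reproduce them. None of this is fatal to the overall strategy, but filling in these gaps essentially amounts to rereading \cite{Nikiforov4}.
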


From the above lemma,
Desai et al. \cite{DKL2022} obtained the following result,
which helps us to  present an approach to prove Theorem  \ref{theorem2.2}.

\begin{lem} \label{lemma3.002}\emph{(\cite{DKL2022})}
Let $H$ be a graph with $\chi(H)=p+1$.
For every $\varepsilon>0$, there exist $\delta>0$ and $n_0$ such that
if $G$ is an $H$-free graph on $n\geq n_0$ vertices with $\rho(G)\geq(\frac{p-1}{p}-\delta)n$,
then $G$ can be obtained from $T_p(n)$ by adding and deleting at most $\varepsilon n^2$ edges.
\end{lem}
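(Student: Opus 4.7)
The plan is to reduce the statement directly to the spectral Stability Lemma (Lemma \ref{lemma3.001}) by ruling out its first alternative when $G$ is $H$-free. More concretely, I would choose Nikiforov's parameters $\varepsilon_0,c_0$ small enough that his error bound $(\varepsilon_0^{1/4}+c_0^{1/(8p+8)})n^2$ is less than the desired $\varepsilon n^2$, set $\delta:=\varepsilon_0$, and then apply his lemma to $G$; the $H$-freeness will kill alternative (i), forcing alternative (ii), which is exactly the conclusion we want.

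In detail, given $\varepsilon>0$, I would first pick positive constants $\varepsilon_0<2^{-36}p^{-24}$ and $c_0<p^{-8(p+21)(p+1)}$ satisfying
$$
\varepsilon_0^{1/4}+c_0^{1/(8p+8)}<\varepsilon;
$$
this is possible because, with $p$ fixed, both $\varepsilon_0^{1/4}$ and $c_0^{1/(8p+8)}$ can independently be made arbitrarily small. Set $\delta:=\varepsilon_0$, and choose $n_0$ large enough that $\frac{1}{\ln n_0}<c_0$ and $\lfloor c_0\ln n_0\rfloor\geq |V(H)|$. If $G$ is an $H$-free graph on $n\geq n_0$ vertices with $\rho(G)\geq(\frac{p-1}{p}-\delta)n$, then Lemma \ref{lemma3.001} (applied with parameters $\varepsilon_0,c_0$) gives one of its two alternatives. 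Alternative (i) would produce a complete $(p+1)$-partite subgraph of $G$ in which every part has size at least $|V(H)|$; since $\chi(H)=p+1$, $H$ embeds into any such blow-up of $K_{p+1}$, contradicting the hypothesis that $G$ is $H$-free. Hence alternative (ii) must hold, and $G$ differs from $T_p(n)$ in fewer than $\varepsilon n^2$ edges, as required.

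The work is almost entirely in setting up Nikiforov's parameters, since the Stability Lemma is the heavy lifting. The only place one has to take any care is the simultaneous choice of $\varepsilon_0$ and $c_0$: they must lie within the ranges allowed by Lemma \ref{lemma3.001} and also be small enough that the combined error $\varepsilon_0^{1/4}+c_0^{1/(8p+8)}$ is below $\varepsilon$. Once this is arranged, the verification that alternative (i) yields a copy of $H$ is immediate, since both $\lfloor c_0\ln n\rfloor$ and $\lceil n^{1-\sqrt{c_0}}\rceil$ tend to infinity with $n$, so for $n\geq n_0$ every part of the promised $(p+1)$-partite subgraph is larger than $|V(H)|$, and $H\subseteq K_{|V(H)|,\dots,|V(H)|}$ (with $p+1$ equal parts) trivially. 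I do not anticipate any further subtlety.
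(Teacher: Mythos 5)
Your derivation is correct and follows exactly the route the paper intends: the paper does not prove this lemma itself but quotes it from Desai et al., noting that it is obtained from Nikiforov's spectral Stability Lemma (Lemma \ref{lemma3.001}) precisely by choosing the parameters small and using $H$-freeness (via $\chi(H)=p+1$ and the divergence of the part sizes) to exclude alternative (i). The only cosmetic point is that Lemma \ref{lemma3.001} needs the strict inequality $\rho(G)>(\frac{p-1}{p}-\varepsilon_0)n$, so one should take $\delta$ slightly smaller than $\varepsilon_0$ (say $\delta=\varepsilon_0/2$) rather than $\delta=\varepsilon_0$; this does not affect the argument.
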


In what follows, we will give the  proof of Theorem \ref{theorem2.2}.
If $F$ contains some isolated vertices, then let $F_0$ be the graph obtained from $F$ by deleting these isolated vertices.
It is not hard to verify that a graph is $F^{p+1}$-free if and only if it is $F_0^{p+1}$-free for sufficiently large $n$.
Hence, throughout the paper,
we just consider the graph $F$ without isolated vertices.
Moreover, we always assume that $n$ is sufficiently large and $G$ is an extremal graph for ${\rm spex}(n,F^{p+1})$ in the following.
Clearly, $G$ is connected.
Otherwise, there exist two components $H_1$ and $H_2$ of $G$ with $\rho(H_1)=\rho(G)$.
Then, we can add a cut edge between $H_1$ and $H_2$ to obtain a new graph with larger spectral radius, which gives a contradiction.
By Perron-Frobenius theorem, there exists a positive unit eigenvector
$X=(x_1,\ldots,x_n)^{\mathrm{T}}$ corresponding to $\rho(G)$, where $x_{u^*}=\max\{x_i~|~i\in V(G)\}$.
Set $$\phi=\max\{2e(F),|V(F^{p+1})|,p,10\}~~\text{and}~~\eta<\frac{1}{50\phi^7},$$
which will be frequently used in the sequel.

\begin{lem}\label{lemma3.02}
For sufficiently large $n$, we have $\rho(G)\geq  \frac{p-1}{p}n-\frac{p}{4n}.$
\end{lem}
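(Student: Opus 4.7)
The plan is to exhibit an explicit $F^{p+1}$-free graph on $n$ vertices whose spectral radius already meets the claimed lower bound, and then invoke the extremality of $G$. The natural witness is the Tur\'{a}n graph $T_p(n)$. Since $F$ has no isolated vertices it has at least one edge, so by the definition of edge blow-up, $F^{p+1}$ contains a copy of $K_{p+1}$; hence every $K_{p+1}$-free graph is $F^{p+1}$-free and, in particular, $T_p(n)$ is $F^{p+1}$-free (which is also consistent with \eqref{align0001}, taking $q=1$ there). Because $G$ is extremal for ${\rm spex}(n,F^{p+1})$, this already gives $\rho(G)\geq \rho(T_p(n))$.

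Next, I would bound $\rho(T_p(n))$ from below by its average degree using the standard inequality $\rho(H)\geq 2e(H)/|V(H)|$. Writing $n=pk+r$ with $0\leq r\leq p-1$, the graph $T_p(n)$ has $r$ parts of size $k+1$ and $p-r$ parts of size $k$. A short computation, using the identity $pk^2+2rk=(n^2-r^2)/p$, yields
\[
e(T_p(n))=\binom{n}{2}-r\binom{k+1}{2}-(p-r)\binom{k}{2}=\frac{(p-1)n^2}{2p}-\frac{r(p-r)}{2p},
\]
and therefore
\[
\rho(T_p(n))\geq \frac{2e(T_p(n))}{n}=\frac{p-1}{p}\,n-\frac{r(p-r)}{pn}.
\]

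To conclude, I would apply the elementary bound $r(p-r)\leq p^2/4$ (AM-GM on the nonnegative reals $r$ and $p-r$), giving
\[
\rho(G)\geq \rho(T_p(n))\geq \frac{p-1}{p}\,n-\frac{p}{4n},
\]
as claimed. The argument is essentially a one-line spectral lower bound on the Tur\'{a}n graph, and I do not anticipate any serious obstacle: the hypothesis that $n$ is sufficiently large is not actually used, and the bound captures both the correct leading order $\tfrac{p-1}{p}n$ of $\rho(T_p(n))$ and the precise lower-order error induced by the imbalance $r=n\bmod p$ of the Tur\'{a}n partition, which is exactly what the statement asks for.
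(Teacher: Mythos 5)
Your proof is correct and follows essentially the same route as the paper: both use $T_p(n)$ as the $F^{p+1}$-free witness (the paper justifies this via $\chi(F^{p+1})=p+1$, you via $K_{p+1}\subseteq F^{p+1}$, which is equally valid), then bound $\rho(T_p(n))$ from below by its average degree $2e(T_p(n))/n$ and use the Tur\'{a}n edge count $e(T_p(n))\geq \frac{p-1}{2p}n^2-\frac{p}{8}$, which is exactly your exact formula combined with $r(p-r)\leq p^2/4$.
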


\begin{proof}
Obviously, $\chi(F^{p+1})=p+1$, which implies that $T_{p}(n)$ is $F^{p+1}$-free.
By the Rayleigh quotient and $e(T_p(n))\geq \frac{p-1}{2p}n^2-\frac{p}{8}$, we obtain that
\begin{center}
  $\rho(G)\geq \rho(T_{p}(n))\geq \frac{\mathbf{1}^{\mathrm{T}}A(T_{p}(n))\mathbf{1}}{\mathbf{1}^{\mathrm{T}}\mathbf{1}}
  =\frac{2e(T_{p}(n))}{n}\geq \frac{p-1}{p}n-\frac{p}{4n},$
\end{center}
as desired.
\end{proof}

\begin{lem}\label{lemma3.03}
For sufficiently large $n$, we have $e(G)\geq \big(\frac{p-1}{2p}-\eta^3\big)n^2.$
Furthermore, $G$ admits a partition $V(G)=\bigcup_{i=1}^{p}V_i$ such that $\sum_{1\leq i<j\leq p}e(V_i,V_j)$ attains the maximum,
$\sum_{i=1}^{p}e(V_i)\le \eta^3 n^2$ and $\big||V_i|-\frac{n}{p}\big|\leq\eta n$ for each $i\in [p].$
\end{lem}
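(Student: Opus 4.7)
The plan is to deduce both parts of the lemma from the spectral stability result (Lemma \ref{lemma3.002}), after observing that $\chi(F^{p+1})=p+1$, so $F^{p+1}$ is a valid choice of $H$ in that lemma. By Lemma \ref{lemma3.02} we have $\rho(G)\geq \frac{p-1}{p}n-\frac{p}{4n}$, so for any fixed small $\delta>0$ the spectral hypothesis $\rho(G)\geq(\frac{p-1}{p}-\delta)n$ holds once $n$ is large enough. Thus, given $\eta$, I would first fix $\varepsilon$ as a small constant multiple of $\eta^3$, then invoke Lemma \ref{lemma3.002} with that $\varepsilon$ to produce a $\delta$ and a threshold $n_0$, and work with $n\geq n_0$. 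The conclusion is that $G$ can be obtained from $T_p(n)$ by adding and deleting together at most $\varepsilon n^2$ edges.

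Let $W_1,\dots,W_p$ denote the parts of $T_p(n)$. Since the number of edges of $T_p(n)$ lost when passing to $G$ is at most $\varepsilon n^2$, the inequality $e(T_p(n))\geq \frac{p-1}{2p}n^2-\frac{p}{8}$ immediately yields
$$e(G)\geq e(T_p(n))-\varepsilon n^2\geq \Bigl(\tfrac{p-1}{2p}-\eta^3\Bigr)n^2$$
for $\varepsilon$ sufficiently small relative to $\eta$ and $n$ sufficiently large, establishing the first claim. For the second claim, I would now choose $V_1,\dots,V_p$ to be any $p$-partition of $V(G)$ maximizing the cross-edge count. The edges of $G$ lying inside the sets $W_i$ are precisely the edges of $G\setminus T_p(n)$, of which there are at most $\varepsilon n^2$; the max-cut property then forces $\sum_{i=1}^{p}e(V_i)\leq \varepsilon n^2\leq \eta^3 n^2$.

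For the balance condition, write $a_i=|V_i|-n/p$, so $\sum a_i=0$. A direct expansion gives $\sum_{i<j}|V_i||V_j|=\frac{p-1}{2p}n^2-\frac{1}{2}\sum a_i^2$. On the other hand,
$$\sum_{i<j}e(V_i,V_j)=e(G)-\sum_{i=1}^{p}e(V_i)\geq \Bigl(\tfrac{p-1}{2p}-2\eta^3\Bigr)n^2,$$
and of course $\sum_{i<j}e(V_i,V_j)\leq \sum_{i<j}|V_i||V_j|$. Combining these two inequalities gives $\sum a_i^2\leq 4\eta^3 n^2$, whence $\max_i |a_i|\leq 2\eta^{3/2}n$, which is well below $\eta n$ thanks to the tiny choice $\eta<\frac{1}{50\phi^7}$.

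There is no real obstacle beyond bookkeeping: the technical heart is Lemma \ref{lemma3.002}, which is applied essentially as a black box. The only care required is the calibration of constants, specifically choosing $\varepsilon$ as a sufficiently small multiple of $\eta^3$ (say $\varepsilon=\eta^3/4$) so that the resulting error terms in both the edge bound and the partition bounds stay within the targets $\eta^3 n^2$ and $\eta n$, and verifying that $\rho(G)\geq (\frac{p-1}{p}-\delta)n$ once $\frac{p}{4n^2}\leq \delta$.
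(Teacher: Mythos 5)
Your proposal is correct and follows essentially the same route as the paper: Lemma \ref{lemma3.02} plus the spectral stability result (Lemma \ref{lemma3.002}) with $\varepsilon$ chosen small relative to $\eta^3$, then passing to a max-cut partition to control $\sum_i e(V_i)$, and an elementary edge count to force balance. The only difference is cosmetic: you derive the balance bound via the exact identity $\sum_{i<j}|V_i||V_j|=\frac{p-1}{2p}n^2-\frac12\sum_i a_i^2$, whereas the paper isolates the most unbalanced part and uses Cauchy--Schwarz, and both give $\max_i\big||V_i|-\frac np\big|=O(\eta^{3/2})n<\eta n$.
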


\begin{proof}
Recall that $\chi(F^{p+1})=p+1$.
By Lemma \ref{lemma3.02}, $\rho(G)\geq \frac{p-1}{p}n-\frac{p}{4n}$.
Then by Lemma \ref{lemma3.002}, $G$ can be obtained from $T_p(n)$ by adding and deleting at most $\varepsilon n^2$ edges.
Let $\varepsilon>0$ be a constant with $\varepsilon<\eta^3$.
We can further obtain that
$e(G)\geq \frac{p-1}{2p}n^2-\eta^3 n^2$,
and there exists a partition $V(G)=\bigcup_{i=1}^{p} U_i$ such that
$\sum_{i=1}^{p}e(U_i)\leq \eta^3 n^2$ and $\big\lfloor\frac{n}{p}\big\rfloor\leq |U_i|\leq \big\lceil\frac{n}{p}\big\rceil$ for each $i\in [p]$.
Furthermore, we could select a new partition $V(G)=\bigcup_{i=1}^{p}V_i$
such that $\sum_{1\leq i<j\leq p}e(V_i,V_j)$ attains the maximum.
This means that $\sum_{i=1}^{p}e(V_i)$ attains the minimum, and hence
\begin{center}
$\sum\limits_{i=1}^{p}e(V_i)\leq \sum\limits_{i=1}^{p}e(U_i)\leq \eta^3 n^2.$
\end{center}

Set $s=\max\{\big||V_i|-\frac{n}{p}\big|~|~i\in [p]\}$. We may assume without loss of generality that $s=||V_1|-\frac{n}{p}|$.
From Cauchy-Schwarz inequality, we know that $(p-1)\sum_{i=2}^{p}|V_i|^2\geq (\sum_{i=2}^{p}|V_i|)^2$.
Then
   $$2\sum_{2\leq i<j\leq p}|V_i||V_j|=\left(\sum_{i=2}^{p}|V_i|\right)^2-\sum_{i=2}^{p}|V_i|^2
   \leq \frac{p-2}{p-1}(n-|V_1|)^2.$$
Consequently,
\begin{eqnarray*}
e(G)
&\leq& \sum_{1\leq i<j\leq p}|V_i||V_j|+\sum_{i=1}^{p}e(V_i)\\
&\leq& |V_1|(n-|V_1|)+\sum_{2\leq i<j\leq p}|V_i||V_j|+\eta^3 n^2\\
&\leq& |V_1|(n-|V_1|)+\frac{p-2}{2(p-1)}(n-|V_1|)^2+\eta^3 n^2\\
&=& -\frac{p}{2(p-1)}s^2+\frac{p-1}{2p}n^2+\eta^3 n^2,
\end{eqnarray*}
where the last equality holds as $s=||V_1|-\frac{n}{p}|$.
On the other hand,
   $$e(G)\geq e(T_{p}(n))-\eta^3 n^2\geq \frac{p-1}{2p}n^2-\frac{p}{8}-\eta^3 n^2
  >\frac{p-1}{2p}n^2-2\eta^3 n^2.$$
Combining the above two inequalities gives $\frac{p}{2(p-1)}s^2<3\eta^3 n^2$.
Thus, $s<\sqrt{\frac{6(p-1)\eta^3}{p}n^2}<\eta n$.
\end{proof}

Given a vertex $v\in V(G)$ and  a vertex subset $X\subseteq V(G)$ (possibly $v\notin X$),
denote by $N_G(v)$ its neighborhood, $d_G(v)$ its degree in $G$, $N_X(v)$ the set of neighbors of $v$ in $X$, respectively. Set $d_X(v)=|N_X(v)|$.
We shall define two vertex subsets $S$ and $W$ of $G$ and provide their characterization.

\begin{lem}\label{lemma3.04}
Let $S=\{v\in V(G)~|~d_G(v)\leq \big(\frac{p-1}{p}-5\eta\big)n\}.$
Then $|S|\leq \eta n$.
\end{lem}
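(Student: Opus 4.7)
The plan is to exploit the near-balanced partition $V(G)=\bigcup_{i=1}^pV_i$ supplied by Lemma~\ref{lemma3.03}: I will show that if $S$ were large, then too many cross-part edges would be missing, contradicting the edge lower bound already established. The Perron eigenvector is not needed for this lemma; everything will be obtained by a single double count.

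For $v\in V_i$, introduce the quantity
$$d^-(v):=\sum_{j\neq i}\bigl|V_j\setminus N_G(v)\bigr|,$$
the number of non-neighbours of $v$ lying outside its own class. Using the part-size estimate $|V_i|\le n/p+\eta n$ from Lemma~\ref{lemma3.03}, every vertex satisfies
$$d_G(v)\ge (n-|V_i|)-d^-(v)\ge \Bigl(\tfrac{p-1}{p}-\eta\Bigr)n-d^-(v),$$
so any $v\in S$ must have $d^-(v)\ge 4\eta n$.

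I would next bound $\sum_v d^-(v)$ globally by double counting:
$$\sum_{v\in V(G)}d^-(v)=2\sum_{1\le i<j\le p}\bigl(|V_i||V_j|-e(V_i,V_j)\bigr).$$
Cauchy--Schwarz gives $\sum_{i<j}|V_i||V_j|\le \frac{p-1}{2p}n^2$ (maximised when the parts are balanced), while Lemma~\ref{lemma3.03} provides
$$\sum_{i<j}e(V_i,V_j)=e(G)-\sum_i e(V_i)\ge \Bigl(\tfrac{p-1}{2p}-2\eta^3\Bigr)n^2.$$
Subtracting, the total cross-edge deficit is at most $2\eta^3 n^2$, and hence $\sum_v d^-(v)\le 4\eta^3 n^2$.

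A crude Markov-type inequality now closes the argument: $|S|\cdot 4\eta n\le \sum_{v\in S}d^-(v)\le 4\eta^3 n^2$, yielding $|S|\le \eta^2 n<\eta n$. There is no deep obstacle, and the entire argument is elementary bookkeeping; the only point that genuinely demands the hypotheses is that Lemma~\ref{lemma3.03} must simultaneously guarantee both $\bigl||V_i|-n/p\bigr|\le \eta n$ \emph{and} $\sum_i e(V_i)\le \eta^3n^2$, so that the Cauchy--Schwarz upper bound and the edge lower bound line up to leave a cross-edge deficit of order $\eta^3 n^2$ rather than only $\eta n^2$. If the partition were merely balanced but not edge-maximal (or conversely), the deficit estimate would degrade and the final Markov step would fail to beat $\eta n$.
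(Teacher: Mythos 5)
Your argument is correct, and it even yields the stronger bound $|S|\le\eta^2 n$; each step checks out: for $v\in V_i$ one indeed has $d_G(v)\ge (n-|V_i|)-d^-(v)\ge(\tfrac{p-1}{p}-\eta)n-d^-(v)$, so every $v\in S$ has $d^-(v)\ge 4\eta n$; the double count $\sum_v d^-(v)=2\sum_{i<j}\bigl(|V_i||V_j|-e(V_i,V_j)\bigr)$ is right, and combining $\sum_{i<j}|V_i||V_j|\le\frac{p-1}{2p}n^2$ with $\sum_{i<j}e(V_i,V_j)\ge e(G)-\sum_i e(V_i)\ge(\tfrac{p-1}{2p}-2\eta^3)n^2$ gives $\sum_v d^-(v)\le 4\eta^3 n^2$, whence the Markov step closes the proof. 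However, your route is genuinely different from the paper's. The paper does not analyze missing cross edges at all: it supposes $|S|>\eta n$, deletes a subset $S'\subseteq S$ of exactly $\lfloor\eta n\rfloor$ low-degree vertices, and shows via the edge bound of Lemma~\ref{lemma3.03} that the remaining graph on $n'=n-\lfloor\eta n\rfloor$ vertices still has more than $e(H(n',p,q,\lambda-1,\mathcal{B}))$ edges, so by Theorems~\ref{theorem3.1} and \ref{theorem3.2} it would contain $F^{p+1}$, contradicting $F^{p+1}$-freeness. Thus the paper leans on the Tur\'an-number machinery for edge blow-ups (and on $G$ being $F^{p+1}$-free), but only needs the global edge count from Lemma~\ref{lemma3.03}; your proof avoids the extremal theorems entirely and is purely elementary bookkeeping, at the price of using the finer conclusions of Lemma~\ref{lemma3.03} (balanced part sizes \emph{and} $\sum_i e(V_i)\le\eta^3n^2$), exactly as you note at the end.
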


\begin{proof}
Suppose to the contrary that $|S|>\eta n$,
then there exists a subset $S'\subseteq S$ with $|S'|=\lfloor\eta n\rfloor$.
Set $n'=|V(G-S')|=n-\lfloor\eta n\rfloor$. Then $n'+1< (1-\eta)n+2$.
Combining these with $e(G)\geq (\frac{p-1}{2p}-\eta^3\big)n^2$ (see Lemma \ref{lemma3.03}),  we deduce that
\begin{align*}
 e(G-S')&\geq  e(G)-\sum_{v\in S'}d_G(v)\nonumber\\
  &\geq  \big(\frac{p-1}{2p}-\eta^3\big)n^2-\eta n\Big(\frac{p-1}{p}-5\eta\Big)n\nonumber\\
  &=    \big(\frac{p-1}{2p}-\eta^3-\frac{p-1}{p}\eta+5\eta^2\Big)n^2 \nonumber\\
  &>  e(H(n',p,q,\lambda-1,\mathcal{B})),
\end{align*}
where the last inequality holds as $e(H(n',p,q,\lambda-1,\mathcal{B}))-\frac{p-1}{2p}\big(n'+1\big)^2=O(n)$.
By Theorems \ref{theorem3.1} and \ref{theorem3.2}, we can see that $G-S'$ contains a copy of $F^{p+1}$ as a subgraph, a contradiction.
This completes the proof.
\end{proof}

\begin{lem}\label{lemma3.05}
Let $W=\bigcup_{i=1}^{p}W_i$, where $W_i=\{v\in V_i~|~d_{V_i}(v)\geq 2\eta n\}$.
Then $|W|\leq \frac{1}{16}\eta n$.
\end{lem}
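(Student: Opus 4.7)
The plan is to bound $|W|$ by a simple double-counting argument using the bound on the number of in-part edges already established in Lemma \ref{lemma3.03}.

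First I would fix an index $i \in [p]$ and count edges inside $V_i$ in two ways. On one hand, $\sum_{v \in V_i} d_{V_i}(v) = 2e(V_i)$. On the other hand, restricting the sum to vertices in $W_i$ and using the definition $d_{V_i}(v) \ge 2\eta n$ for every $v \in W_i$, I get
\[
  2e(V_i) \;=\; \sum_{v \in V_i} d_{V_i}(v) \;\ge\; \sum_{v \in W_i} d_{V_i}(v) \;\ge\; 2\eta n \cdot |W_i|,
\]
so $|W_i| \le e(V_i)/(\eta n)$.

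Then I would sum over $i \in [p]$ and apply the bound $\sum_{i=1}^{p} e(V_i) \le \eta^3 n^2$ from Lemma \ref{lemma3.03} to obtain
\[
  |W| \;=\; \sum_{i=1}^{p} |W_i| \;\le\; \frac{1}{\eta n}\sum_{i=1}^{p} e(V_i) \;\le\; \frac{\eta^3 n^2}{\eta n} \;=\; \eta^2 n.
\]
Finally, since $\eta < 1/(50\phi^7)$ and $\phi \ge 10$ by the choice at the end of the previous page, in particular $\eta < 1/16$, so $\eta^2 n \le \frac{1}{16}\eta n$, which gives the desired inequality.

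There is no real obstacle here: the lemma is essentially a direct consequence of Lemma \ref{lemma3.03} together with the pigeonhole observation that vertices of large in-part degree must be few when the total number of in-part edges is small. The only thing to be careful about is that the constant $\eta$ was chosen small enough (in terms of $\phi$) that $\eta^2 \le \eta/16$, which is automatic from $\eta < 1/(50\phi^7) \le 1/50$.
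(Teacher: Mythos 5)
Your proof is correct and follows essentially the same double-counting argument as the paper: bound $|W_i|$ via $2e(V_i)\ge 2\eta n|W_i|$, sum over $i$ using $\sum_i e(V_i)\le \eta^3 n^2$ from Lemma \ref{lemma3.03}, and conclude from $\eta<\tfrac{1}{16}$. Nothing further is needed.
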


\begin{proof}
For every $i\in [p]$,
\begin{center}
  $2e(V_i)=\sum\limits_{v\in V_i}d_{V_i}(v)\geq
\sum\limits_{v\in W_i}d_{V_i}(v)\geq |W_i|\cdot 2\eta n.$
\end{center}
Combining this with Lemma \ref{lemma3.03} gives
\begin{center}
  $\eta^3 n^2\geq \sum\limits_{i=1}^{p}e(V_i)\geq \sum\limits_{i=1}^{p}|W_i|\eta n=|W|\eta n.$
\end{center}
This yields that $|W|\leq \frac{1}{16}\eta n$.
\end{proof}

Set $\overline{V}_i=V_i\setminus (W\cup S)$ for every $i\in [p]$.
Then we give the following result.

\begin{lem}\label{lemma3.06}
Let $i\in [p]$ and $j\in [\phi^2]$. Then \\
(i) for any $u\in \bigcup_{k\in [p]\setminus \{i\}}(W_{k}\setminus S)$, $d_{V_{i}}(u)\geq\big(\frac{1}{p^2}-2\phi\eta\big)n$; \\
(ii) for any $u\in \bigcup_{k\in [p]\setminus \{i\}}\overline{V}_{k}$, $d_{V_{i}}(u)\geq\big(\frac{1}{p}-2\phi\eta\big)n$;\\
(iii) if $u\in \bigcup_{k\in [p]\setminus \{i\}}(W_{k}\setminus S)$ and
    $\{u_1,\dots,u_j\}\subseteq \bigcup_{k\in [p]\setminus \{i\}}\overline{V}_k$,
then there exist at least $\phi$ vertices in $\overline{V}_i$ adjacent to $u_1,\dots,u_j$ (and $u$).
\end{lem}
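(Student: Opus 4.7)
I would deduce all three parts from two basic ingredients. First, the max-cut choice of partition in Lemma~\ref{lemma3.03} forces $d_{V_j}(v)\ge d_{V_k}(v)$ for every $v\in V_k$ and every $j\ne k$, since moving $v$ from $V_k$ to $V_j$ would otherwise strictly enlarge $\sum_{1\le a<b\le p}e(V_a,V_b)$, contradicting maximality. Second, any $v\notin S$ satisfies $d_G(v)\ge(\tfrac{p-1}{p}-5\eta)n$, while $|V_\ell|\le n/p+\eta n$ for each $\ell$ by Lemma~\ref{lemma3.03}.

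For (i), I would fix $u\in W_k\setminus S$ with $k\ne i$ and split $d_G(u)=d_{V_k}(u)+d_{V_i}(u)+\sum_{\ell\ne k,i}d_{V_\ell}(u)$. Replacing $d_{V_k}(u)$ by the smaller $d_{V_i}(u)$ via the max-cut inequality and each $d_{V_\ell}(u)$ with $\ell\ne k,i$ by the trivial upper bound $n/p+\eta n$, one obtains
\[
2d_{V_i}(u)\ \ge\ d_G(u)-(p-2)\bigl(\tfrac{n}{p}+\eta n\bigr)\ \ge\ \tfrac{n}{p}-(p+3)\eta n,
\]
so $d_{V_i}(u)\ge(\tfrac{1}{p^2}-2\phi\eta)n$ since $\phi\ge\max\{p,10\}$. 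For (ii), $u\in\overline{V}_k$ additionally gives $d_{V_k}(u)<2\eta n$, and bounding the parts $\ell\ne k,i$ by $n/p+\eta n$ in the same way directly yields $d_{V_i}(u)\ge(\tfrac{1}{p}-2\phi\eta)n$.

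For (iii), set $U=\{u,u_1,\ldots,u_j\}$, so $|U|\le\phi^2+1$. By (i) and (ii), each $v\in U$ has non-neighbourhood $V_i\setminus N(v)$ of size at most $(\tfrac{1}{p}-\tfrac{1}{p^2}+3\phi\eta)n$ when $v=u$ and at most $3\phi\eta n$ when $v=u_r$; moreover $|W\cup S|\le 2\eta n$ by Lemmas~\ref{lemma3.04} and~\ref{lemma3.05}. A union bound on the complements inside $V_i$ then gives
\[
\Bigl|\overline{V}_i\cap\bigcap_{v\in U}N(v)\Bigr|\ \ge\ |V_i|-\Bigl(\tfrac{1}{p}-\tfrac{1}{p^2}+3\phi\eta\Bigr)n-3j\phi\eta n-2\eta n\ \ge\ \tfrac{n}{p^2}-7\phi^3\eta n,
\]
which exceeds $\phi$ for large $n$ because $\eta<1/(50\phi^7)$.

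The only subtle step is in (i): one must invoke the max-cut inequality \emph{before} bounding the individual $d_{V_\ell}(u)$, for $u\in W_k$ may have $d_{V_k}(u)$ almost as large as $|V_k|$, so a naive per-part accounting would leave $d_{V_i}(u)$ unbounded from below. Once that inequality is applied everything else is routine degree counting, and the $\phi$-dependent budget $\eta<1/(50\phi^7)$ comfortably absorbs every $\eta$-error term with room to spare.
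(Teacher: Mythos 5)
Your proposal is correct and takes essentially the same route as the paper: parts (i) and (ii) by degree counting using the max-cut property of the partition together with the bounds $d_G(u)>(\frac{p-1}{p}-5\eta)n$, $d_{V_k}(u)<2\eta n$ and $|V_k|\le(\frac1p+\eta)n$, and part (iii) by a union/inclusion bound on common neighbourhoods inside $V_i$, discarding $|W\cup S|\le 2\eta n$ vertices to land in $\overline{V}_i$. The only cosmetic difference is in (i), where you invoke the pairwise inequality $d_{V_k}(u)\le d_{V_i}(u)$ while the paper uses the averaged form $d_{V_k}(u)\le\frac1p d_G(u)$ (your version even yields the stronger bound $\approx\frac{n}{2p}$); just note that the word ``smaller'' there should read ``larger'', since max-cut maximality gives $d_{V_i}(u)\ge d_{V_k}(u)$, which is the direction your displayed inequality actually uses.
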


\begin{proof}
(i) Assume that $u\in W_{k_0}\setminus S$ for some ${k_0}\in [p]\setminus \{i\}$.
Since $V(G)=\bigcup_{i=1}^{p}V_i$ is a partition such that $\sum_{1\leq i<j\leq p}e(V_i,V_j)$ attains the maximum,
$d_{V_{{k_0}}}(u)\leq \frac{1}{p}d_{G}(u)$.
By Lemma \ref{lemma3.03}, $|V_{k}|\leq \big(\frac{1}{p}+\eta\big)n$ for any $k\in [p]$.
These, together with $d_G(u)> \big(\frac{p-1}{p}-5\eta\big)n$ (as $u\notin S$), give that
\begin{center}
  $d_{V_i}(u)
    = d_G(u)-d_{V_{k_0}}(u)-\sum\limits_{k\in [p]\setminus\{i,k_0\}}d_{V_{k}}(u)
    \geq \frac{p-1}{p}d_G(u)- (p-2)\left(\frac{1}{p}+\eta\right)n
    \geq \big(\frac{1}{p^2}-2\phi\eta\big)n. $
\end{center}

(ii)
Assume that $u\in \overline{V}_{k_0}$ for some $k_0\in [p]\setminus \{i\}$.
 Since $u\notin S$, $d_G(u)> \big(\frac{p-1}{p}-5\eta\big)n$.
Moreover, since $u\notin W_{k_0}$, we have $d_{V_{k_0}}(u)<2\eta n$.
Then it follows from $d_{V_k}(u)\leq |V_{k}|\leq \big(\frac{1}{p}+\eta\big)n$ for any $k\in [p]\setminus\{i,k_0\}$ that
\begin{align*}
  d_{V_i}(u)
    &= d_G(u)-d_{V_{k_0}}(u)-\sum\limits_{k\in [p]\setminus\{i,k_0\}}d_{V_{k}}(u)\\
    &\geq \left(\frac{p-1}{p}-5\eta\right)n-2\eta n- (p-2)\left(\frac{1}{p}+\eta\right)n\\
    &\geq \left(\frac{1}{p}-2\phi\eta\right)n.
\end{align*}

(iii) By (i), $d_{V_i}(u)\geq \big(\frac{1}{p^2}-2\phi\eta\big)n$;
and by (ii), $d_{V_i}(u_k)\geq\big(\frac{1}{p}-2\phi\eta\big)n$ for any $k\in \{1,\dots,j\}$.
Thus,
\begin{align*}
\left|N_{V_i}(u)\cap\left(\bigcap_{k=1}^{j}N_{V_i}(u_k)\right)\right|
           &\geq  |N_{V_i}(u)|+\sum_{k=1}^{j}|N_{V_i}(u_k)|-j|V_i|\\
            &>  \left(\frac{1}{p^2}-2\phi\eta\right)n+j\left(\frac{1}{p}-2\phi\eta\right)n
                       -j\left(\frac{1}{p}+\eta\right)n\\
            &=  \left(\frac{1}{p^2}-3\phi^3\eta\right)n\\
            &\geq  |W\cup S|+\phi,
\end{align*}
as $\eta<\frac{1}{50\phi^7}$, $|W\cup S|\leq 2\eta n$, and $n$ is sufficiently large.
Then, there exist at least $\phi$ vertices in $\overline{V}_i$ adjacent to $u_1,\dots,u_j$ (and $u$).
\end{proof}

In the following two lemmas, we focus on proving $S=\varnothing$.

\begin{lem}\label{lemma3.07}
 For every $i\in [p]$, we have $\overline{\nu}_i\leq e(F)-1$,
where  $\overline{\nu}_i=\nu\big(G[\overline{V}_i]\big)$.
Moreover, $G[\overline{V}_i]$ contains
an independent set $\overline{I}_i$ with $|\overline{V}_i\setminus \overline{I}_i|\leq 2e(F)-2$.
\end{lem}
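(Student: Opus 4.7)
The approach is to establish the bound on $\overline{\nu}_i$ by contradiction and then deduce the independent-set statement directly from it. Suppose, aiming for a contradiction, that $\overline{\nu}_i\geq e(F)$ for some $i\in[p]$. Then $G[\overline{V}_i]$ contains a matching $M$ of size $e(F)$, whose $2e(F)$ endpoints realize a copy of $M_{2e(F)}$ inside $\overline{V}_i$. Since Lemma \ref{lemma3.01} yields $M_{2e(F)}\in\mathcal{M}$, Definition \ref{def2.1} supplies a constant $t=t(F,p)$ with
\[
F^{p+1}\subseteq (M_{2e(F)}\cup E_t)+T_{p-1}\bigl((p-1)t\bigr).
\]
Hence, to produce a copy of $F^{p+1}$ in $G$ (and contradict $F^{p+1}$-freeness) it suffices to extend $V(M)$ by $t$ further vertices $u_1,\dots,u_t$ from $\overline{V}_i\setminus V(M)$ and to locate a copy of $T_{p-1}((p-1)t)$ in $\bigcup_{k\neq i}\overline{V}_k$ that is completely joined to $V(M)\cup\{u_1,\dots,u_t\}$. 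The selection of $u_1,\dots,u_t$ is trivial, since $|\overline{V}_i|\geq(\tfrac{1}{p}-3\eta)n$ dwarfs the constant $2e(F)+t$.

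To find the required $T_{p-1}((p-1)t)$, I would apply Lemma \ref{lemma3.06}(ii) in two stages. First, each of the $r:=2e(F)+t$ chosen vertices in $\overline{V}_i$ has at least $(\tfrac{1}{p}-2\phi\eta)n$ neighbors in every $V_k$ with $k\neq i$, so a Bonferroni-type count shows that their common neighborhood $C_k$ inside $\overline{V}_k$ has size at least $(\tfrac{1}{p}-O(\phi\eta))n$ after discarding the $O(\eta n)$ vertices of $W\cup S$. Second, since $t$ is a bounded constant, I would build a copy of $T_{p-1}((p-1)t)$ greedily inside $\bigcup_{k\neq i}C_k$: pick $t$ vertices from each $C_k$ in sequence, requiring at each step that the new vertex be adjacent to all previously chosen vertices lying in other parts. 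This succeeds because any vertex of $\overline{V}_k$ again has at least $(\tfrac{1}{p}-2\phi\eta)n$ neighbors in each $V_{k'}$, so intersecting with the current feasible set in $C_{k'}$ shrinks the latter by only $O(\eta n)$ per step, while the total number of steps is the constant $(p-1)t$. The resulting structure embeds $(M_{2e(F)}\cup E_t)+T_{p-1}((p-1)t)$, and therefore $F^{p+1}$, into $G$, contradicting the $F^{p+1}$-freeness.

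The second assertion now follows at once. Let $M^{\ast}$ be a maximum matching of $G[\overline{V}_i]$ and set $\overline{I}_i:=\overline{V}_i\setminus V(M^{\ast})$. Maximality forces $\overline{I}_i$ to be independent in $G[\overline{V}_i]$ (any edge inside would enlarge $M^{\ast}$), and the first part yields $|M^{\ast}|=\overline{\nu}_i\leq e(F)-1$, whence $|\overline{V}_i\setminus\overline{I}_i|=2|M^{\ast}|\leq 2e(F)-2$. The only real subtlety lies in controlling the accumulated error in the greedy stage above; this is routine because only $r+(p-1)t=O(1)$ vertices are involved and every single step loses at most $O(\eta n)$ elements of the feasible set, while the tiny choice $\eta<1/(50\phi^7)$ leaves ample room.
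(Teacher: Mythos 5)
Your proposal is correct and follows essentially the same route as the paper: a matching of size $e(F)$ in $\overline{V}_i$ gives $M_{2e(F)}\in\mathcal{M}$, which via Definition \ref{def2.1} is completed to a copy of $F^{p+1}$ by greedily choosing constantly many vertices in the other classes joined to everything selected so far, contradicting $F^{p+1}$-freeness; the paper simply packages your Bonferroni/greedy counting as repeated applications of Lemma \ref{lemma3.06}(iii). Your derivation of the independent set $\overline{I}_i$ from a maximum matching is identical to the paper's.
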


\begin{proof}
By symmetry, we only need to prove $\overline{\nu}_1\leq e(F)-1$.
Suppose to the contrary that $\overline{\nu}_1\geq e(F)$.
Let $u_{1,1}u_{1,2},u_{1,3}u_{1,4},\dots,u_{1,2e(F)-1}u_{1,2e(F)}$ be independent edges in $G[\overline{V}_1]$.
Recall that $\phi\geq 2e(F)$.
We now select a subset $\widehat{V}_1=\{u_{1,1},u_{1,2},\dots,u_{1,\phi}\}\subseteq \overline{V}_1$.
By Lemma \ref{lemma3.06} (iii), there exists a subset
  $\widehat{V}_2=\{u_{2,1},u_{2,2},\dots,u_{2,\phi}\}\subseteq \overline{V}_2$
such that all vertices in $\widehat{V}_2$ are adjacent to all vertices in $\widehat{V}_1$.
Recursively applying Lemma \ref{lemma3.06} (iii), we can select a sequence of subsets $\widehat{V}_2,\dots,\widehat{V}_p$
such that  for all $k\in \{2,\dots,p\}$,
$\widehat{V}_k=\{u_{k,1},u_{k,2},\dots,u_{k,\phi}\}\subseteq \overline{V}_k$ and all vertices in $\widehat{V}_k$ are adjacent to all vertices in $\bigcup_{i=1}^{k-1}\widehat{V}_i$.
Then $G[\bigcup_{i=1}^{p}\widehat{V_i}]$ contains a subgraph obtained by embedding $M_{2e(F)}$ into one class
of $T_p(p\phi)$.
Applying Definition \ref{def2.1} with $M_{2e(F)}\in \mathcal{M}$, $G[\bigcup_{i=1}^{p}\widehat{V}_i]$ contains a copy of $F^{p+1}$ as a subgraph,
which gives a contradiction.

Therefore, $\overline{\nu}_i\leq e(F)-1$.
If $\overline{\nu}_i=0$, then $\overline{V}_i$ is a desired independent set.
If $\overline{\nu}_i>0$, then let $u_{i,1}u_{i,2},\dots,u_{i,2\overline{\nu}_i-1}u_{i,2\overline{\nu}_i}$
be $\overline{\nu}_i$ independent edges of $G[V_i]$ and $\overline{I}_i=\overline{V}_i\setminus\{u_{i,1},u_{i,2},\dots,u_{i,\overline{2\nu_i}}\}.$
Now, if $G[\overline{I}_i]$ contains an edge,
then $\nu\big(G[\overline{V}_i]\big)\geq \overline{\nu}_i+1$,
a contradiction.
So, $\overline{I}_i$ is an independent set of $G[\overline{V}_i]$.
In both cases, we can see that $|\overline{V}_i\setminus \overline{I}_i|\leq 2e(F)-2$.
\end{proof}

Since $|W|\leq\frac{1}{16}\eta n$,
we may select a vertex $v^*$ such that
$x_{v^*}=\max\{x_v~|~v\in V(G)\setminus W\}$.
Recall that $x_{u^*}=\max\{x_v~|~v\in V(G)\}$. Then
$\rho(G)x_{u^*}\le |W|x_{u^*}+(n-|W|)x_{v^*}.$
Combining this with Lemmas \ref{lemma3.02} and \ref{lemma3.05}, we get
\begin{equation}\label{align0002}
x_{v^*}\geq\frac{\rho(G)-|W|}{n-|W|}x_{u^*}\geq \frac{\rho(G)-|W|}{n}x_{u^*}
     >\left(\frac{p-1}{p}-\eta\right)x_{u^*}>\frac{1}{2}x_{u^*}.
\end{equation}
This implies that
\begin{equation}\label{align0101}
|W|x_{u^*}\leq \frac{1}{16}\eta n \cdot 2x_{v^*}=\frac{1}{8}\eta n x_{v^*}.
\end{equation}

Suppose that $v^*\in V_{i_0}$.
Then by the definition of $W$, we have $|N_{\overline{V}_{i_0}}(v^*)|\leq |N_{V_{i_0}}(v^*)|<2\eta n$.
Combining this with \eqref{align0101} and Lemma \ref{lemma3.04}, we have
\begin{align*}
 \rho(G)x_{v^*}&=  \sum_{v\in N_{W\cup S}(v^*)}x_v+
                    \sum_{v\in N_{\overline{V}_{i_0}}(v^*)}x_v+
                    \sum_{v\in N_{\bigcup_{i\in [p]\setminus \{i_0\}}\overline{V}_i}(v^*)}x_v \nonumber\\
              &<  \big(|W|x_{u^*}+|S|x_{v^*}\big)+2\eta nx_{v^*}+\sum_{i\in [p]\setminus\{i_0\}}\sum_{v\in  \overline{V}_{i}\setminus \overline{I}_{i}}x_v+\sum_{i\in [p]\setminus\{i_0\}}\sum_{v\in \overline{I}_i}x_v\nonumber\\
              &\leq  \big(\frac{1}{8}\eta n x_{v^*}+\eta n x_{v^*}\big)+2\eta nx_{v^*}+\frac{1}{8}\eta nx_{v^*}
                   +\sum_{i\in [p]\setminus\{i_0\}}\sum_{v\in \overline{I}_i}x_v,
\end{align*}
where $\overline{I}_i$ is an independent set of
$G[\overline{V}_i]$
such that $\big|\overline{V}_i\setminus \overline{I}_i\big|\leq 2e(F)-2$ (see Lemma \ref{lemma3.07}).
Then,
\begin{align}\label{align0003}
\sum_{i\in [p]\setminus\{i_0\}}\sum_{v\in \overline{I}_i}x_v
\geq \big(\rho(G)-\frac{13}{4}\eta n\big)x_{v^*}.
\end{align}

\begin{lem}\label{lemma3.08}
$S$ is an empty set.
\end{lem}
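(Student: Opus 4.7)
The plan is to derive a contradiction by supposing some vertex $v_0 \in S$ exists and constructing a modification $G^*$ of $G$ that is still $F^{p+1}$-free but has strictly larger spectral radius. Let $v^* \in V_{i_0}$ be as in \eqref{align0003}, and set
\[
N^* := \bigcup_{i \in [p]\setminus\{i_0\}} \overline{I}_i.
\]
Observe that $v_0 \notin N^*$ since each $\overline{I}_i$ avoids $W \cup S$. I would define $G^*$ as the graph obtained from $G$ by deleting every edge at $v_0$ and then making $v_0$ adjacent to every vertex of $N^*$.

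The key structural step is verifying that $G^*$ is $F^{p+1}$-free. Since $G$ is $F^{p+1}$-free and $G^*$ differs from $G$ only in edges incident to $v_0$, any hypothetical copy $H \cong F^{p+1}$ in $G^*$ must use $v_0$. Since $F$ has no isolated vertex, every vertex of $F^{p+1}$ lies in some $K_{p+1}$, so the vertex of $F^{p+1}$ corresponding to $v_0$ has a $K_p$ in its $F^{p+1}$-neighborhood; hence $N_H(v_0)$ contains $p$ pairwise adjacent vertices. Since $N_H(v_0) \subseteq N_{G^*}(v_0) = N^*$ and edges of $G^*$ among $N^*$ coincide with edges of $G$, this yields $K_p \subseteq G[N^*]$. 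But by Lemma \ref{lemma3.07} the partition $N^* = \bigcup_{i \neq i_0} \overline{I}_i$ is a proper $(p-1)$-coloring of $G[N^*]$, so $G[N^*]$ is $K_p$-free, a contradiction.

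For the spectral comparison, I would use the shifted test vector $X'$ that agrees with $X$ off $v_0$ and has $x'_{v_0} = x_{v^*}$. A direct edge-by-edge expansion together with $\rho(G)x_{v_0}=\sum_{u \in N_G(v_0)}x_u$ gives
\[
X'^{\mathrm{T}}A(G^*)X' - \rho(G)\|X'\|^2 = 2x_{v^*}\sum_{u \in N^*} x_u - \rho(G)\bigl(x_{v^*}^2+x_{v_0}^2\bigr).
\]
To show this is positive, I would first tighten the estimate on $x_{v_0}$: splitting $N_G(v_0)$ into its intersection with $W$ and its complement, and applying \eqref{align0101} together with the degree bound $d_G(v_0) \le (\tfrac{p-1}{p}-5\eta)n$, one gets $\rho(G)x_{v_0} \le |W|x_{u^*} + d_G(v_0)x_{v^*} \le (\tfrac{p-1}{p}-\tfrac{39}{8}\eta)n\,x_{v^*}$, so $\beta := x_{v_0}/x_{v^*}$ satisfies $1-\beta^2 \ge \tfrac{39 p\eta}{4(p-1)}(1-o(1))$. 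Invoking \eqref{align0003}, positivity reduces to $\rho(G)(1-\beta^2) > \tfrac{13}{2}\eta n$, which holds comfortably because $\tfrac{39}{4} > \tfrac{13}{2}$ and $\rho(G) \ge \tfrac{p-1}{p}n - o(1)$. Thus $\rho(G^*) > \rho(G)$ while $G^*$ is $F^{p+1}$-free on $n$ vertices, contradicting the extremality of $G$, so $S = \varnothing$.

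The main obstacle is the $F^{p+1}$-freeness of $G^*$. The spectral improvement is a routine Rayleigh perturbation once $x_{v_0}$ is controlled, but the $F^{p+1}$-freeness crucially rests on two pillars: every vertex of $F^{p+1}$ is contained in a $K_{p+1}$ (which in turn requires the standing assumption that $F$ has no isolated vertex), and $N^*$ carries a natural $(p-1)$-partite structure thanks to the independent sets $\overline{I}_i$ of Lemma \ref{lemma3.07}. The comfortable numerical gap between $\tfrac{39}{8}$ and $\tfrac{13}{4}$ in the spectral calculation leaves room to absorb the various lower-order error terms.
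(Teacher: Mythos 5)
Your proposal is correct, and the surgery itself is exactly the paper's: take $u_0\in S$, delete all edges at $u_0$, and join $u_0$ to $N^*=\bigcup_{i\in[p]\setminus\{i_0\}}\overline{I}_i$. Where you differ is in the two verifications. For $F^{p+1}$-freeness the paper re-embeds: if the modified graph contained a copy $H$ of $F^{p+1}$ through $u_0$, then Lemma \ref{lemma3.06}\,(iii) supplies a vertex $u\in\overline{V}_{i_0}\setminus V(H)$ adjacent to all of $N_H(u_0)$, so $G$ itself would contain $F^{p+1}$, contradicting that $G$ is $F^{p+1}$-free. Your argument instead notes that every vertex of $F^{p+1}$ lies in a $K_{p+1}$ (valid under the standing assumption that $F$ has no isolated vertices), so $N_H(v_0)\subseteq N^*$ would have to span a $K_p$ in $G$, which is impossible because $N^*$ is a disjoint union of the $p-1$ independent sets $\overline{I}_i$. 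Both are sound; the paper's re-embedding trick is the one reused later (in Lemma \ref{lemma3.09} and Claim \ref{claim3.4}), while yours is more self-contained and exploits the clique structure of $F^{p+1}$ directly. For the spectral comparison the paper simply evaluates the quadratic form of the original unit eigenvector $X$ on $A(G')-A(G)$, bounding $\sum_{v\in N_G(u_0)}x_v\le |W|x_{u^*}+d_G(u_0)x_{v^*}<(\rho(G)-4\eta n)x_{v^*}$ and comparing with \eqref{align0003}; you instead shift the test vector by resetting $x'_{v_0}=x_{v^*}$, which costs the extra bookkeeping with $\|X'\|^2$ and $\beta=x_{v_0}/x_{v^*}$, but your identity is right and the numerical margin works: $\beta\le\bigl(\tfrac{p-1}{p}-\tfrac{39}{8}\eta\bigr)n/\rho(G)$ together with the lower bound of Lemma \ref{lemma3.02} gives $\rho(G)(1-\beta^2)\ge\tfrac{39}{4}\eta n(1-o(1))>\tfrac{13}{2}\eta n$. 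So your proof is a valid, mildly different implementation of the same extremal-perturbation idea, trading the paper's generic re-embedding and plain Rayleigh step for a coloring argument and a shifted test vector.
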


\begin{proof}
Suppose to the contrary that there exists a vertex $u_0\in S$.
Let $G'$ be the graph obtained from $G$ by deleting all edges incident to $u_0$
and joining all possible edges between $\bigcup_{i\in [p]\setminus\{i_0\}}\overline{I}_i$ and $u_0$.
We first claim that $G'$ is $F^{p+1}$-free.
Otherwise, $G'$ contains a subgraph $H$ isomorphic to $F^{p+1}$.
From the construction of $G'$,
we can see that $u_0\in V(H)$.
Assume that $N_{H}(u_0)=\{u_1,u_2,\dots,u_a\}$,
then $a\leq \phi-1$ and $u_1,u_2,\dots,u_a\in \bigcup_{i\in [p]\setminus\{i_0\}}\overline{I}_i$ by the definition of $G'$.
Clearly, $\overline{I}_i\subseteq \overline{V}_i$ for each $i\in [p]\setminus \{i_0\}$.
By Lemma \ref{lemma3.06} (iii), we can select a vertex $u\in \overline{V}_{i_0}\setminus V(H)$ adjacent to $u_1,u_2,\dots,u_a$.
This implies that $G[(V(H)\setminus \{u_0\})\cup\{u\}]$ contains a copy of $F^{p+1}$, a contradiction.
Therefore, $G'$ is $F^{p+1}$-free.

In what follows, we shall show that $\rho(G')>\rho(G)$.
By the definition of $S$, $d_G(u_0)\leq (\frac{p-1}{p}-5\eta)n$.
Combining this with \eqref{align0101} and Lemma \ref{lemma3.02}, we have
\begin{align*}
\sum_{v\in N_G(u_0)}x_{v}=\sum_{v\in N_{W}(u_0)}x_v+\sum_{v\in N_{G-W}(u_0)}x_v
\le |W|x_{u^*}+d_G(u_0)x_{v^*}<\big(\rho(G)-4\eta n\big)x_{v^*}.
\end{align*}
Combining this with \eqref{align0003} gives
\begin{center}
  $\rho(G')-\rho(G) \geq X^{\mathrm{T}}\big(A(G')-A(G)\big)X
                  = 2x_{u_0}\left(\sum\limits_{i\in [p]\setminus \{i_0\}}\sum\limits_{v\in \overline{I}_i}x_v-\sum\limits_{v\in N_G(u_0)}x_v\right)>0,$
\end{center}
contradicting that $G$ is an extremal graph for ${\rm spex}(n,F^{p+1})$.
Hence, $S=\varnothing$.
\end{proof}

The following lemma is used to give a lower bound for
entries of vertices in $V(G)\setminus W$.

\begin{lem}\label{lemma3.09}
 For any $v\in V(G)\setminus W$, $(1-7\eta)x_{v^*}\leq x_v\leq x_{v^*}$.
 Moreover, %if $W\neq \varnothing$, then
 $x_v\geq \big(\frac{p-1}{p}-8\eta\big)x_{u^*}$.
\end{lem}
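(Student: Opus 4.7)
The plan is to mirror the swap argument used in the proof of Lemma~\ref{lemma3.08}. The upper bound $x_v \leq x_{v^*}$ is immediate from the definition of $v^*$ as the entry maximizer over $V(G)\setminus W$. For the lower bound, fix $v \in V(G)\setminus W$; since Lemma~\ref{lemma3.08} gives $S = \varnothing$, we have $v \in \overline{V}_{i_1}$ for some $i_1 \in [p]$. Recalling that $v^* \in V_{i_0}$, I would form $G'$ from $G$ by deleting every edge incident to $v$ and then joining $v$ to every vertex of $T := \bigl(\bigcup_{j \neq i_0} \overline{I}_j\bigr)\setminus \{v\}$.

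The first step is to check that $G'$ is $F^{p+1}$-free. Any copy $H \cong F^{p+1}$ in $G'$ must contain $v$ (else $H \subseteq G$), so $N_H(v) \subseteq T \subseteq \bigcup_{j\neq i_0}\overline{V}_j$ consists of at most $\phi-1$ vertices. A union-bound count identical in spirit to the proof of Lemma~\ref{lemma3.06}(iii), built on the degree estimate Lemma~\ref{lemma3.06}(ii) applied to each such vertex, produces a vertex $u \in \overline{V}_{i_0} \setminus V(H)$ adjacent in $G$ to every vertex of $N_H(v)$. Swapping $v$ for $u$ in $H$ places $F^{p+1}$ back inside $G$, a contradiction.

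Since $G$ is extremal and $G'$ is $F^{p+1}$-free, $\rho(G') \leq \rho(G)$. Unpacking the Rayleigh quotient at $v$ gives
\[
0 \;\geq\; X^{\mathrm{T}}\bigl(A(G') - A(G)\bigr)X \;=\; 2 x_v\!\left[\sum_{u \in T} x_u \;-\; \rho(G)\, x_v\right].
\]
Because $T$ differs from $\bigcup_{j\neq i_0}\overline{I}_j$ by at most the single vertex $v$, inequality \eqref{align0003} yields $\sum_{u \in T} x_u \geq (\rho(G) - \tfrac{13}{4}\eta n)\, x_{v^*} - x_v$, and combining with the display produces $(\rho(G)+1)\, x_v \geq (\rho(G) - \tfrac{13}{4}\eta n)\, x_{v^*}$. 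Invoking Lemma~\ref{lemma3.02} to bound $\rho(G) \geq \tfrac{p-1}{p}n - \tfrac{p}{4n}$, this simplifies to $x_v \geq (1 - 7\eta)\, x_{v^*}$ for sufficiently large $n$. The ``moreover'' clause is then obtained by chaining with \eqref{align0002}: $x_v \geq (1-7\eta)(\tfrac{p-1}{p} - \eta)\, x_{u^*} \geq (\tfrac{p-1}{p} - 8\eta)\, x_{u^*}$.

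The main obstacle is verifying the $F^{p+1}$-freeness of $G'$: the common-neighbor count in $\overline{V}_{i_0}$ must survive subtracting $W$ and $V(H)$ to remain at least $\phi$. This is precisely what the standing hypotheses $|W| \leq \tfrac{1}{16}\eta n$, $\eta < \tfrac{1}{50\phi^7}$, and $\phi \geq |V(F^{p+1})|$ guarantee; the rest of the argument is routine Rayleigh-quotient bookkeeping building on the already-established estimate \eqref{align0003}.
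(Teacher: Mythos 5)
Your proposal is correct and follows essentially the same route as the paper: the same modification $G'$ (delete all edges at the vertex, rejoin it to $\bigcup_{j\neq i_0}\overline{I}_j$ minus itself), the same swap argument from Lemma \ref{lemma3.08} for $F^{p+1}$-freeness, and the same Rayleigh-quotient computation combining \eqref{align0003}, Lemma \ref{lemma3.02} and \eqref{align0002}. The only cosmetic difference is that you argue directly from $\rho(G')\le\rho(G)$ for every $v\in V(G)\setminus W$, whereas the paper argues by contradiction from a hypothetical vertex with $x_{u_0}<(1-7\eta)x_{v^*}$; the two are equivalent.
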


\begin{proof}
By Lemma \ref{lemma3.02}, $\rho(G)>\frac{p-1}{p}n-\frac{p}{4n}>\frac12 n$.
This, together with (\ref{align0003}), leads to that
\begin{align}\label{align0006}
\sum\limits_{i\in [p]\setminus \{i_0\}}\sum\limits_{v\in \overline{I_i}}x_v
\geq\left(\rho(G)-\frac{13}{4}\eta n\right)x_{v^*}>\left(1-\frac{13}{2}\eta\right)\rho(G)x_{v^*}.
\end{align}

For any $v\in V(G)\setminus W$,
by the definition of $v^*$, we have $x_v\leq x_{v^*}$.
Now we prove that $x_v\geq (1-7\eta)x_{v^*}$.
Suppose to the contrary, then there exists a vertex $u_0$ such that $x_{u_0}<(1-7\eta)x_{v^*}$.
Let $G'$ be the graph obtained from $G$ by deleting edges incident to $u_0$
and joining all edges between $\overline{I}:=(\bigcup_{i\in [p]\setminus \{i_0\}}\overline{I}_i)\setminus \{u_0\}$ and $u_0$.
By a similar discussion as in the proof of Lemma \ref{lemma3.08},
we can find that $G'$ is $F^{p+1}$-free.

On the other hand,
since $x_{u_0}<(1-7\eta)x_{v^*}$, by \eqref{align0006},
\begin{center}
  $\sum\limits_{v\in \overline{I}}x_v-\sum\limits_{v\in N_G(u_0)}x_{v}\geq\sum\limits_{i\in [p]\setminus \{i_0\}}\sum\limits_{v\in \overline{I}_i}x_v -x_{u_0}-\rho(G)x_{u_0}>0.$
\end{center}
This leads to that
\begin{center}
  $\rho(G')-\rho(G) \geq  X^{\mathrm{T}}\big(A(G')-A(G)\big)X
                  = 2x_{u_0}\left(\sum\limits_{v\in \overline{I}}x_v-\sum\limits_{v\in N_G(u_0)}x_v\right)>0,$
\end{center}
contradicting the fact that $G$ is an extremal graph for ${\rm spex}(n,F^{p+1})$.
Hence, $x_v\geq (1-7\eta)x_{v^*}$ for any $v\in V(G)\setminus W$.
Moreover, from \eqref{align0002} we know that $x_{v^*}>\frac{p-1}{p}\big(1-\eta\big)x_{u^*}$.
Then, $$x_v\geq (1-7\eta)\frac{p-1}{p}\big(1-\eta\big)x_{u^*}\geq \left(\frac{p-1}{p}-8\eta\right)x_{u^*}.$$
The proof is complete.
\end{proof}

Given an arbitrary integer $i\in [p]$,
denote by $W_i^*=\{v\in V_i~|~d_{V_i}(v)\geq 3\phi^3\eta n\}$, $W^*=\bigcup_{i=1}^{p}W_i^*$,
$V_i^*=V_i\setminus W_i^*$, $G_i=G[V_i^*]$ and $G_{in}=\bigcup_{i=1}^{p}G_i$, respectively.
Obviously, $W_i^*\subseteq W_i$ and $W^*\subseteq W$.
In the following lemmas, we shall give some characterizations of $W^*$ and $G_{in}$.

\begin{lem}\label{lemma3.10}
We have $|W^*|= q-1$. Moreover, $d_G(u)\geq (1-20\phi^5\eta )n$ and $x_u\geq (1-40\phi^5\eta)x_{u^*}$ for any $u\in W^*$.
\end{lem}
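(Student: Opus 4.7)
I plan to prove the three conclusions of Lemma~\ref{lemma3.10} in three steps: first the upper bound $|W^{*}|\le q-1$ by an embedding argument, then the degree and entry bounds for $u\in W^{*}$ by a swap perturbation modelled on Lemmas~\ref{lemma3.08} and \ref{lemma3.09}, and finally the matching lower bound $|W^{*}|\ge q-1$ by a spectral comparison.

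For the upper bound, suppose $|W^{*}|\ge q$ and choose distinct $w_{1},\dots,w_{q}\in W^{*}$ with $w_{i}\in V_{j_{i}}$. Each $w_{i}$ has at least $3\phi^{3}\eta n$ within-part neighbors, and since $S=\varnothing$ gives $w_{i}\in W\setminus S$, also at least $(1/p^{2}-2\phi\eta)n$ neighbors in every other part by Lemma~\ref{lemma3.06}(i). By Lemma~\ref{lemma3.01}, the star forest $F'\cong \bigl(\bigcup_{v\in I_{U^{*}}}S_{d_{F}(v)+1}\bigr)\cup M_{2e(U')}$ lies in $\mathcal{M}$; since $F'$ is bipartite with $q(F')=q$, we have $F'\in\mathcal{M}^{*}$. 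Map the centers of the stars and one endpoint of each matching edge of $F'$ (which together form an independent covering of size $q$) to $w_{1},\dots,w_{q}$, and embed the remaining vertices of $F'$ (each of degree $1$) as distinct neighbors of the appropriate $w_{i}$'s; this is possible because each $w_{i}$ has $\Omega(n)$ neighbors and $|V(F')|=O(1)$. Then apply Lemma~\ref{lemma3.06}(iii) iteratively to produce a copy of $T_{p-1}((p-1)t)$ with $t=t(\{F^{p+1}\})$ together with an extra $E_{t}$, joined entirely to the embedded $F'$; Definition~\ref{def2.1} then yields $F^{p+1}\subseteq G$, contradicting $F^{p+1}$-freeness.

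For the degree and entry bounds, fix $u\in W^{*}\cap V_{i_{0}}$ and set $\overline{I}=\bigl(\bigcup_{k\in[p]\setminus\{i_{0}\}}\overline{I}_{k}\bigr)\setminus\{u\}$, with $\overline{I}_{k}$ as in Lemma~\ref{lemma3.07}. The graph $G^{u}$ obtained from $G$ by deleting all edges at $u$ and inserting all edges from $u$ to $\overline{I}$ is $F^{p+1}$-free by the reroute argument of Lemma~\ref{lemma3.08}: any $F^{p+1}$-copy through $u$ can be rewired through a vertex of $\overline{V}_{i_{0}}$ guaranteed by Lemma~\ref{lemma3.06}(iii). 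Extremality of $G$ then forces $\sum_{v\in\overline{I}}x_{v}\le\sum_{v\in N_{G}(u)}x_{v}\le d_{G}(u)x_{u^{*}}$. Combining this with the bound from \eqref{align0006}, with \eqref{align0002} relating $x_{v^{*}}$ and $x_{u^{*}}$, and with the eigenvalue equation $\rho(G)x_{u}=\sum_{v\in N_{G}(u)}x_{v}$, routine manipulation delivers both $d_{G}(u)\ge(1-20\phi^{5}\eta)n$ and $x_{u}\ge(1-40\phi^{5}\eta)x_{u^{*}}$.

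For the lower bound, suppose $|W^{*}|\le q-2$. The bounds from the previous step force each vertex of $W^{*}$ to behave as a near-universal apex, so $G$ is spectrally close to a graph of the form $H(n,p,|W^{*}|+1,\cdot)$ with at most $q-1$ apexes, giving $\rho(G)\le\rho(H(n,p,q-1,\cdot))+o(1)$. However, $H(n,p,q,0,\mathcal{B})$ is $F^{p+1}$-free by \eqref{align0001}, so extremality of $G$ yields $\rho(G)\ge\rho(H(n,p,q,0,\mathcal{B}))$, and the spectral gap $\rho(H(n,p,q,0,\mathcal{B}))-\rho(H(n,p,q-1,\cdot))$ is $\Theta(1)$, producing the contradiction. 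The principal obstacle will be the embedding in the upper-bound step: the $w_{i}$'s may be distributed arbitrarily across the parts and the distinctness of their common neighbors must be tracked carefully, but Lemma~\ref{lemma3.01} reduces the problem to the cleaner star-forest case $F'\in\mathcal{M}^{*}$.
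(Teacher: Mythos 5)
Your plan deviates from the paper's proof at every step, and each deviation hides a genuine gap. For the upper bound $|W^*|\le q-1$: you map all $q$ vertices of an independent covering of the star forest $F'$ to vertices $w_1,\dots,w_q\in W^*$ and then want a copy of $T_{p-1}((p-1)t)$ (plus $E_t$) completely joined to the embedded $F'$, citing an "iterative" use of Lemma \ref{lemma3.06}(iii). But at this stage the only degree information for a vertex of $W^*$ is $d_G(w)\ge(\frac{p-1}{p}-5\eta)n$ (since $S=\varnothing$) together with Lemma \ref{lemma3.06}(i), and Lemma \ref{lemma3.06}(iii) admits only \emph{one} vertex of $W$ among its inputs. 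The common neighborhood of $q$ vertices of $W^*$ is only bounded below by $(1-\frac{q}{p}-5q\eta)n$, which is vacuous whenever $q\ge p$ — and $q\ge p$ does occur (e.g.\ $P_t^{p+1}$ with $p=6$ and $t$ large, or $K_t^{p+1}$, where $q=\binom{t-1}{2}+1$ while $p\ge 4t-6$). This is precisely the obstacle the paper's argument is built around: it never requires common neighbors of more than $\mu$ vertices of $W^*$ at once. It embeds the auxiliary graph $F_b$ (each edge of $F$ expanded into a $K_3$ or $K_4$) via Claims \ref{claim3.1}--\ref{claim3.3}, where Claim \ref{claim3.3} — the only place the hypothesis $p\ge 4\mu-2$ enters — handles at most $\mu$ vertices of $W^*$, and only afterwards blows each small clique up into $K_{p+1}$'s by Claim \ref{claim3.2}. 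If your direct embedding worked, the hypothesis $p\ge 4\mu-2$ would be superfluous.

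The other two steps also fail as stated. Rewiring $u\in W^*$ to $\overline{I}$ as in Lemmas \ref{lemma3.08}--\ref{lemma3.09} only yields $d_G(u)x_{u^*}\ge\sum_{v\in\overline{I}}x_v\approx\frac{p-1}{p}n\cdot x_{v^*}$, hence $d_G(u)\gtrsim(\frac{p-1}{p})^2n$; since $\overline{I}$ has only about $\frac{p-1}{p}n$ vertices, no "routine manipulation" can upgrade this to $(1-20\phi^5\eta)n$. The paper instead deletes \emph{all} edges of $G_{in}\cup G[W^*]$ — only $O(\phi^4\eta n)$ of them, thanks to $\nu(G_i)\le e(F)-1$, $\Delta(G_i)\le3\phi^3\eta n$ and Theorem \ref{theorem2.1} — and joins $u_0$ to every vertex of $G-W^*$; the modified graph is $F^{p+1}$-free because it lies inside $H(pn,p,q)$ (this is where $|W^*|\le q-1$ is used), and only then does the Rayleigh-quotient comparison force the near-full degree, from which the bound on $x_u$ follows by the eigenvalue equation. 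Finally, your lower-bound step asserts $\rho(G)\le\rho(H(n,p,q-1,\cdot))+o(1)$ from "spectral closeness", but nothing available controls the contribution of the internal edges: a priori $e(G_{in})$ can be as large as $\eta^3n^2$, enough to shift the spectral radius by far more than the $\Theta(1)$ apex gap you invoke. The paper's actual argument is again a concrete exchange: assuming $|W^*|\le q-2$, delete $E(G_{in})\cup E(G[W^*])$, join one vertex $u_0\in V_1^*$ to all of $V_1^*\setminus\{u_0\}$, observe the result is a subgraph of the $F^{p+1}$-free graph $H(pn,p,q)$ because $W^*\cup\{u_0\}$ has at most $q-1$ vertices, and win roughly $\frac{n}{2p}x_{u^*}^2$ against a loss of $O(\phi^4\eta n)x_{u^*}^2$, contradicting extremality.
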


\begin{proof}
We first give three claims.
\begin{claim}\label{claim3.1}
Given two integers $i\in [p]$ and $j\in [\phi^2]$.
If $u\in W^*$ and $\{u_1,u_2,\dots,u_j\}\subseteq \bigcup_{k\in [p]\setminus \{i\}}\overline{V}_k$,
then there exist at least $\phi$ vertices in $\overline{V}_i$ adjacent to $u,u_1,\dots,u_j$.
\end{claim}

\begin{proof}

Assume that $u\in W_{k_0}^*$ for some $k_0\in [p]$.
From the definition of $W_{k_0}^*$ we know that $d_{V_{k_0}}(u)\geq 3\phi^3\eta n$;
and from Lemma \ref{lemma3.06} (i) we have $d_{V_k}(u)\geq \big(\frac{1}{p^2}-2\phi\eta\big)n\geq 3\phi^3\eta n$ for any $k\neq k_0$.
Thus, $d_{V_i}(u)\geq 3\phi^3\eta n$.
Moreover, Lemma \ref{lemma3.06} (ii) gives that $d_{V_i}(u_k)\geq \big(\frac{1}{p}-2\phi\eta\big)n$ for any $k\in [j]$.
Combining these with $|V_i|\leq \big(\frac{1}{p}+\eta\big)n$ due to Lemma \ref{lemma3.03}, we deduce that
\begin{align*}
 \left|N_{V_{i}}(u)\cap \left(\bigcap_{k=1}^{j}N_{V_{i}}(u_k)\right)\right|
           &\geq  |N_{V_i}(u)|+\sum\limits_{k=1}^{j}|N_{V_i}(u_k)|-j|V_i|\\
            &> 3\phi^3\eta n+j\left(\frac{1}{p}-2\phi\eta\right)n-j\left(\frac{1}{p}+\eta\right)n\\
            &\geq  |W|+\phi,
\end{align*}
where the last inequality holds as $j\leq \phi^2$, $|W|\leq \frac{1}{16}\eta n$ and $n$ is sufficiently large.
Then, there exist at least $\phi$ vertices in $\overline{V}_i$ adjacent to $u,u_1,\dots,u_j$.
\end{proof}

\begin{claim}\label{claim3.2}
Given integers $i\in [p]$ and $j\in \{2,\dots,p\}$.
Let $Y=\{y_1,y_2,\dots,y_j\}$ be a clique of $G$ with
$y_1\in W_{i}^*$, $y_2\in N_{\overline{V}_{i}}(y_1)$ and $\{y_2,\dots,y_j\}\subseteq \bigcup_{k=1}^{p}\overline{V}_k$.
Then $G$  has $\phi$ copies of $K_{p+1}$ with  $y_1,y_2,\dots,y_j$ as their common neighbors.
\end{claim}

\begin{proof}
Since $y_1,y_2\in V_{i}$, all vertices in $Y$ belong to at most $j-1$ partite sets.
We may assume without loss of generality that $Y\subseteq \bigcup_{k=1}^{j-1}V_k$.
By Claim \ref{claim3.1}, there exists a subset
  $\widehat{V}_j=\{u_{j,1},u_{j,2},\dots,u_{j,\phi}\}\subseteq \overline{V}_{j}$
such that all vertices in $\widehat{V}_j$ are adjacent to all vertices in $Y$.
If $p\geq j+1$,
then recursively applying Claim \ref{claim3.1}, we can select a sequence of subsets $\widehat{V}_{j+1},\dots,\widehat{V}_p$
such that for all $s\in \{j+1,\dots,p\}$ , $\widehat{V}_s=\{u_{s,1},u_{s,2},\dots,u_{s,\phi}\}\subseteq \overline{V}_s$ and all vertices in $\widehat{V}_s$ are adjacent to all vertices in $\big(\bigcup_{k=j}^{s-1}\widehat{V}_{k}\big)\cup Y$.
Let $Y_k=Y\cup \{u_{j,k},\dots,u_{p,k}\}$ for each $k\in [\phi]$.
Clearly, $G[Y_k]\cong K_{p+1}$ and $Y_a\cap Y_b=Y$ for $a\neq b$.
Thus, $G$ has $\phi$ copies of $K_{p+1}$ with $y_1,y_2,\dots,y_j$ as their common neighbors.
\end{proof}

\begin{claim}\label{claim3.3}
Given non-negative integers $r\in [\mu]$ and $s\leq \mu-1$.
Assume $Y=\{u_1,\dots,u_{r+s},v_1,\dots,v_{r}\}$,
where $u_k\in W_{k^*}^*$ for some $k^*\in [p]$ and each $k\in [r]$,
$v_k\in N_{\overline{V}_{k^*}}(u_k)$ for each $k\in [r]$, and $u_{r+k}\in \overline{V}_{(r+k)^*}$ for each $k\in [s]$ if $s\neq 0$.
Then there exist at least $\phi$ vertices adjacent to all vertices in $Y$.
\end{claim}

\begin{proof}
Since $V(G)=\bigcup_{i=1}^{p}V_i$ is a partition such that $\sum_{1\leq i<j\leq p}e(V_i,V_j)$ attains the maximum,
we have $d_{V_{{j^*}}}(u_j)\leq \frac{1}{p}d_{G}(u_j)$ for each $u_j\in W_{j^*}^*$.
Then,
\begin{align}\label{align0100}
 \left|\left(\bigcap_{j=1}^{r+s}N_G(u_{j})\right)\cap\left(\bigcup_{i=1}^{r+s}V_{i^*}\right)\right|\leq \sum_{i=1}^{r+s}\left|\left(\bigcap_{j=1}^{r+s}N_G(u_{j})\right)\cap V_{i^*}\right|\leq \sum_{i=1}^{r+s}|N_{V_{i^*}}(u_{i})|\leq \frac{1}{p}\sum_{i=1}^{r+s}|N_{G}(u_{i})|.
\end{align}
On the other hand, $\left|\bigcap_{i=1}^{r+s}N_G(u_{i})\right|\geq \sum_{i=1}^{r+s}|N_G(u_{i})|-(r+s-1)(n-1)$.
Combining this with \eqref{align0100}, we obtain
\begin{align*}
\left|\left(\bigcap_{i=1}^{r+s}N_G(u_{i})\right)\Big\backslash\left(\bigcup_{j=1}^{r+s}V_{j^*}\right)\right|
   &\geq \frac{p-1}{p}\sum_{i=1}^{r+s}|N_G(u_{i})|-(r+s-1)(n-1)\\
   &\geq \frac{p-1}{p}(r+s)\left(\frac{p-1}{p}-5\eta\right)n-(r+s-1)(n-1)\\
   &\geq \frac{p^2-(r+s)(2p-1)}{p^2}n-5(r+s)\eta n\\
   &\geq \frac{p^2-(2\mu-1)(2p-1)}{p^2}n-5(2\mu-1)\eta n\\
   &\geq \frac{(2\mu-1)n}{p^2}-5(2\mu-1)\eta n~~~~~~~(\text{as}~p\geq 4\mu-2)\\
   &\geq |W|+\frac{n}{2p^2}~~~~~~~~~~~~~~~~~~(\text{as}~\eta<\frac{1}{50\phi^7}).
\end{align*}
This implies that $$\left|\left(\bigcap_{i=1}^{r+s}N_G(u_{i})\right)\cap\left(\bigcup_{k\in [p]\setminus \{i^*|i\in [r+s]\}}\overline{V}_{k^*}\right)\right|\geq \frac{n}{2p^2}.$$
By the Pigeonhole Principle, there is a vertex subset $\widetilde{V}_{i_0}\subset \overline{V}_{i_0}$ such that
$i_0\in [p]\setminus \{i^*~|~i\in [r+s]\}$, $|\widetilde{V}_{i_0}|\geq \frac{n}{2p^3}$ and all vertices in $\widetilde{V}_{i_0}$ are adjacent to all vertices in $\{u_1,u_2,\dots,u_{r+s}\}$.
By Lemma \ref{lemma3.06} (ii), $d_{V_{i_0}}(v_k)\geq\big(\frac{1}{p}-2\phi\eta\big)n$ for any $k\in [r]$.
Thus,
\begin{align*}
\left|\widetilde{V}_{i_0}\cap\left(\bigcap_{k=1}^{r}N_{V_{i_0}}(v_k)\right)\right|
           &\geq  |\widetilde{V}_{i_0}|+\sum_{k=1}^{r}|N_{V_{i_0}}(v_k)|-r|V_{i_0}|\\
            &>  \frac{n}{2p^3}+r\left(\frac{1}{p}-2\phi\eta\right)n
                       -r\left(\frac{1}{p}+\eta\right)n\\
            &\geq  \left(\frac{1}{2p^3}-3\phi^2\eta\right)n\\
            &\geq  \phi,
\end{align*}
as $\eta<\frac{1}{50\phi^7}$ and $n$ is sufficiently large.
Then, there exist at least $\phi$ vertices in $\widetilde{V}_{i_0}$ adjacent to all vertices in $Y$.
\end{proof}

By the definition of $\mu$,
we choose a vertex subset $U\subseteq V(F)$ such that  $V(F)\setminus U$ is an independent set of $F$,
$q(F\nabla U)=q$ and $\mu=\max\{d_{F}(x)~|~x\in U\}$, and subject to this, $|U|$ is as small as possible.
Recall that $U$ is non-empty as $U\in \mathcal{U}$.
Now we claim that in graph $F$, any vertex in $U$ is adjacent to at least one vertex in $V(F)\setminus U$.
Otherwise, there exists a vertex $v_{i_0}\in U$ such that $N_F(v_{i_0})\subseteq U$.
Set $U'=U\setminus \{v_{i_0}\}$.
Clearly, $V(F)\setminus U'$ is an independent set of $F$.
Consequently, $$q(F\nabla U')=|V(F)\setminus U'|+e(U')=|V(F)\setminus U|+1+e(U)-d_F(v_{i_0})=q-d_F(v_{i_0})+1.$$
If $d_F(v_{i_0})=1$, then $q(F\nabla U')=q$, and hence $U'\in \mathcal{U}$.
However, this contradicts the minimality of $U$ as $|U'|<|U|$.
If $d_F(v_{i_0})\geq 2$, then $q(F\nabla U')=q-d_F(v_{i_0})+1<q$, contradicting the definition of $q$.
Hence, the claim holds, which implies that $U$ is a proper subset of $V(F)$.
Set
$$V(F)\setminus U=\{u_1,\dots,u_{a}\}~~\text{and}~~U=\{v_1,\dots,v_b\}.$$
Clearly, $F\nabla U\cong \big(\bigcup_{i=1}^{a}S_{d_F(u_i)+1}\big)\cup M_{2e(U)}$ and $q=a+e(U)$.
Let $F_b$ be the graph obtained from $F$ by the following:\\
(i) for any $u_iv_j\in E(F)$, replace $u_iv_j$ with a $K_3$ by adding a new vertex $u_{ij}$ such that  $V(K_3)=\{u_i,v_j,u_{ij}\}$;\\
(ii) for any $v_iv_j\in E(F)$, replace $v_iv_j$ with a $K_4$ by adding two new vertices $v_{ij},v_{ji}$ such that $V(K_4)=\{v_i,v_j,v_{ij},v_{ji}\}$;\\
(iii) all the new vertices are distinct.

Let $F_0=F_b-U$.
Then, $F_0\cong F\nabla U$.
Set $L=\{u_1,u_2,\dots,u_a\}\cup \{v_{ij}~|~v_iv_j\in E(F),i<j\}$.
Then, $|L|=a+e(U)=q$ and $L$ is an independent covering of $F_0$.
In the following we shall prove that $|W^*|\leq q-1$.
Otherwise, there exists a subset $W^{**}\subseteq W^*$ with $|W^{**}|=q$.
We may assume that $W^{**}=\{u_1^*,u_2^*,\dots,u_a^*\}\cup \{v_{ij}^*~|~v_iv_j\in E(F),i<j\}$.
Choose an arbitrary vertex $u^*\in W^{**}$.
Assume that $u^*\in V_{i^*}$.
Then, $|N_{\overline{V}_{i^*}}(u^*)|\geq |N_{V_{i^*}}(u^*)|-|W|\geq 2\phi^3 n$.
It follows that for any $v_j\in N_F(u_i)$ we can select a vertex $u_{ij}^*\in N_{\overline{V}_{i^*}}(u_i^*)$ for some $i^*$ satisfying $u_i^*\in V_{i^*}$, and for any $v_j\in N_F(v_i)$ with $i<j$ we can select a vertex $v_{ji}^*\in N_{\overline{V}_{i^*}}(v_{ij}^*)$ for some $i^*$ satisfying $v_{ij}^*\in V_{i^*}$ such that all $u_{ij}^*s$ and $v_{ji}^*s$ are distinct.
Let $F_0^*$ be the subgraph of $G$ induced by
 $$\{u_1^*,\dots,u_a^*\}\cup \{u_{ij}^*~|~u_iv_j\in E(F)\}\cup \{v_{ij}^*~|~v_iv_j\in E(F),i<j\}
\cup \{v_{ji}^*~|~v_iv_j\in E(F),i<j\}.$$
It is not hard to verify that $F_0^*$ contains a copy of $F_0$.

Recall that in graph $F_b$, each vertex in $U$ is adjacent to at least one vertex in $V(F)\setminus U$.
By the definition of $U$, for any $v_k\in U$, we have $|\{v_{i}~|~v_iv_k\in E(U),i<k\}|\leq \mu-1$ and
    $$|\{u_i~|~u_iv_k\in E(F)\}\cup \{v_{ik}~|~v_iv_k\in E(F),i<k\}|\leq d_F(v_k)\leq \mu.$$
Recursively applying Claim \ref{claim3.3}, we can select a sequence of vertices $v_1^*,v_2^*,\dots,v_b^*\in \cup_{k=1}^{p}\overline{V}_k$
such that for each $j\in [b]$, $v_j^*$ is adjacent to all vertices in $\{v_{i}^*~|~v_iv_j\in E(U), i<j\}$ and
$$\{u_i^*~|~u_iv_j\in E(F)\}\cup \{u_{ij}^*~|~u_iv_j\in E(F)\}\cup \{v_{ij}^*~|~v_iv_j\in E(F),i<j\}
\cup \{v_{ji}^*~|~v_iv_j\in E(F),i<j\}.$$
It follows that $G[V(F_0^*)\cup \{v_1^*,v_2^*,\dots,v_b^*\}]$ contains a copy  of $F_b$, say $H$.
Applying Claim \ref{claim3.2} on all $K_3s$ and $K_4s$ of $H$, we can see that $G$ contains a copy of $F^{p+1}$, which also gives a contradiction.
Hence, $|W^*|\leq q-1$.

In what follows, we prove that $|W^*|= q-1$.
If $q=1$, then we are done.
It remains the case $q\geq 2$.
Otherwise, $|W^*|\leq q-2$.
Choose an arbitrary vertex $u_0\in V(G_1)$.
Let $G'$ be the graph obtained from $G$ by deleting all edges of $G_{in}\cup G[W^{*}]$
and adding all edges between $u_0$ and $V_1^*\setminus \{u_0\}$.
Obviously, $G'\subset H(pn,p,q)$.
By \eqref{align0001}, $H(pn,p,q)$ is $F^{p+1}$-free, which implies that $G'$ is also $F^{p+1}$-free.
 By a similar discussion as in the proof of Lemma \ref{lemma3.07},
we can see that $\nu(G_{i})\leq e(F)-1\leq \frac{\phi}{2}-1$; and from the definition of $G_i$ we know that $\Delta(G_i)\leq 3\phi^3\eta n$.
By Theorem \ref{theorem2.1}, $e(G_i)\leq f(e(F)-1,\Delta(G_i))\leq \frac32\phi^4\eta n$.
According to Lemmas \ref{lemma3.09} and \ref{lemma3.03}, we have $x_v>\big(\frac{p-1}{p}-8\eta\big)x_{u^*}\geq \frac{1}{2}x_{u^*}$ for each $v\in V(G)$, and
 $$|V_1^*\setminus \{u_0\}|\geq |V_1|-|W^*|-1\geq |V_1|-q\geq \frac{n}{p}-2\eta n.$$
Combining the above two inequalities, we obtain
\begin{align*}
\rho(G')-\rho(G)
    &\geq  {X^{\mathrm{T}}(A(G')-A(G))X} \nonumber\\
    &\geq  2\left(\sum_{v\in V_1^*\setminus \{u_0\}}x_{u_0}x_{v}-\sum_{vw\in E(G_{in}\cup G[W^*])}x_vx_w\right) \nonumber\\
  &\geq 2\left(\left(\frac{n}{p}-2\eta n\right)\frac{1}{4}x_{u^*}^2-\left(p\cdot\frac32\phi^4\eta n +\binom{q-2}{2}\right)x_{u^*}^2\right)\nonumber\\
  &\geq 2\left(\frac{n}{4p}-2\phi^5\eta n\right) x_{u^*}^2>0
\end{align*}
as $\eta<\frac{1}{50\phi^7}$,
contradicting that $G$ is an extremal graph for ${\rm spex}(n,F^{p+1})$.
Hence $|W^*|= q-1$.

Finally, we consider the case $q\geq 2$, that is, $W^*\neq \varnothing$.
We shall show that $d_G(u)\geq (1-20\phi^5\eta )n$ for any vertex $u\in W^*$.
Otherwise, there exists a vertex $u_0\in W^*$ such that $d_G(u_0)< (1-20\phi^5\eta )n$.
Let $G''$ be the graph obtained from $G$ by deleting all edges of $G_{in}\cup G[W^*]$
and adding edges between $u_0$ and all non-adjacent vertices of $u_0$ in $G-W^*$.
By \eqref{align0001}, $H(pn,p,q)$ is $F^{p+1}$-free,
which implies that $G''$ is also $F^{p+1}$-free as $G''\subset H(pn,p,q)$.
On the other hand,
\begin{align*}
\rho(G'')-\rho(G)
    &\geq  {X^{\mathrm{T}}(A(G'')-A(G))X} \nonumber\\
    &\geq  2\left(\sum_{v\notin N_{G-W^*}(u_0)}x_{u_0}x_{v}-\sum_{vw\in E(G_{in})\cup E(G[W])}x_vx_w\right) \nonumber\\
  &\geq 2\left((20\phi^5\eta n-q+1)\frac{1}{4}x_{u^*}^2-\left(p\cdot\frac32\phi^4\eta n+\binom{q-1}{2}\right)x_{u^*}^2\right)\nonumber\\
  &>0.
\end{align*}
However, this contradicts the fact that $G$ is an extremal graph for ${\rm spex}(n,F^{p+1})$.
Hence, $d_G(u)\geq (1-20\phi^5\eta )n$  for any vertex $u\in W^*$.
Thus,
   $$\rho(G)x_u-\rho(G)x_{u^*}
           \geq \sum\limits_{v\in N_G(u)}x_{v}-\sum\limits_{v\in V(G)}x_{v}
           \geq -\sum\limits_{v\notin N_G(u)}x_{v}
           \geq -20\phi^5\eta n x_{u^*},$$
which implies that $x_u\geq (1-\frac{20\phi^5\eta n}{\rho(G)})x_{u^*}\geq (1-40\phi^5\eta)x_{u^*}$ as $\rho(G)\geq  \frac{p-1}{p}n-\frac{p}{4n}>\frac{n}{2}$.
\end{proof}

\begin{lem}\label{lemma3.11}
Given integers $i\in [p]$ and $j\in [\phi^2]$.
If $\{v_1,v_2,\dots,v_j\}\subseteq \bigcup_{k\in [p]\setminus \{i\}}V_k^*$,
then there exist at least $\phi$ vertices in $V_i^*$ adjacent to all vertices in $W^*\cup \{v_1,\dots,v_j\}$.
\end{lem}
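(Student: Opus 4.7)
The plan is to mimic the intersection-of-neighborhoods computations already used in Lemma \ref{lemma3.06} and Claim \ref{claim3.1}, but now handling all vertices of $W^*$ simultaneously in addition to the $j$ prescribed vertices. Since $|W^*|=q-1\le \phi-1$ by Lemma \ref{lemma3.10} and $j\le\phi^2$, the total number of neighborhoods being intersected is bounded by roughly $2\phi^2$, which is small compared with $n$.

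First I would produce lower bounds on $|N_{V_i}(x)|$ for each $x$ in $W^*\cup\{v_1,\dots,v_j\}$. For $u\in W^*$, Lemma \ref{lemma3.10} gives $d_G(u)\ge (1-20\phi^5\eta)n$, and subtracting the contributions $d_{V_k}(u)\le|V_k|\le(\tfrac{1}{p}+\eta)n$ for $k\in[p]\setminus\{i\}$ (via Lemma \ref{lemma3.03}) yields $|N_{V_i}(u)|\ge \tfrac{n}{p}-c_1\phi^5\eta n$ for a small absolute constant $c_1$. For $v_k\in V_k^*\subseteq V_k\setminus W_k^*$ with $k\neq i$, the definition of $W_k^*$ gives $d_{V_k}(v_k)<3\phi^3\eta n$, and since $S=\varnothing$ by Lemma \ref{lemma3.08} we have $d_G(v_k)>(\tfrac{p-1}{p}-5\eta)n$; subtracting $d_{V_k}(v_k)$ and $d_{V_{k'}}(v_k)\le(\tfrac{1}{p}+\eta)n$ for $k'\in[p]\setminus\{i,k\}$ gives $|N_{V_i}(v_k)|\ge\tfrac{n}{p}-c_2\phi^3\eta n$.

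Next I would apply the standard bound $\bigl|\bigcap_x N_{V_i}(x)\bigr|\ge \sum_x|N_{V_i}(x)|-(m-1)|V_i|$ with $m=(q-1)+j\le 2\phi^2$. Plugging in the two estimates and using $|V_i|\le(\tfrac1p+\eta)n$, the main $\tfrac{n}{p}$ terms telescope against $(m-1)\cdot\tfrac{n}{p}$ to leave exactly $\tfrac{n}{p}$, and the error collects to at most $O(\phi^7\eta n)$, using $q\le\phi$ and $j\le\phi^2$. Because $\eta<\tfrac{1}{50\phi^7}$, this error is smaller than $\tfrac{n}{2p}$, so
\[
\Bigl|\bigcap_{x\in W^*\cup\{v_1,\dots,v_j\}} N_{V_i}(x)\Bigr|\ge \frac{n}{2p}.
\]

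Finally, to land the common neighbors inside $V_i^*$ rather than merely in $V_i$, I would discard the at most $|W_i^*|\le|W^*|=q-1\le\phi-1$ vertices of $W_i^*$ from the intersection. Since $\tfrac{n}{2p}-(\phi-1)\ge\phi$ for sufficiently large $n$, we obtain at least $\phi$ vertices of $V_i^*$ adjacent to every vertex in $W^*\cup\{v_1,\dots,v_j\}$. There is no real obstacle here beyond bookkeeping: the only subtle point is verifying that the error constants accumulated from Lemma \ref{lemma3.10} (order $\phi^5\eta$) and from the defining threshold of $W_k^*$ (order $\phi^3\eta$) stay under control when multiplied by $|W^*|$ and $j$ respectively, which is guaranteed by the choice $\eta<\tfrac{1}{50\phi^7}$.
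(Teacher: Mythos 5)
Your proposal is correct and follows essentially the same route as the paper: bound $d_{V_i}(v_k)\ge(\tfrac1p-4\phi^3\eta)n$ using $S=\varnothing$ and $v_k\notin W_k^*$, use the degree bound of Lemma \ref{lemma3.10} for the vertices of $W^*$, intersect all these neighborhoods inside $V_i$, and discard the few vertices of $W^*$ to land in $V_i^*$ (the paper merely splits this into two steps, first intersecting the $j$ neighborhoods and then removing the at most $(q-1)\cdot 20\phi^5\eta n$ non-neighbors of $W^*$). One small bookkeeping remark: the accumulated error is really $O(\phi^6\eta n)$ (since $|W^*|\le q-1\le e(F)\le\phi/2$ multiplies the $\phi^5\eta n$ term), which is what makes the comparison with $\tfrac{n}{2p}$ go through under $\eta<\tfrac{1}{50\phi^7}$; quoting it as $O(\phi^7\eta n)$ would make that final comparison marginal.
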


\begin{proof}
Choose an arbitrary  $v\in V_{k_0}^*$ for some $k_0\in [p]\setminus \{i\}$.
Then, $d_{V_{k_0}}(v)<3\phi^3\eta n$.
Since $S=\varnothing$, $d_G(v)> \big(\frac{p-1}{p}-5\eta\big)n$.
Since $d_{V_k}(v)\leq |V_{k}|\leq \big(\frac{1}{p}+\eta\big)n$ for any $k\in [p]\setminus\{i,k_0\}$, we have
\begin{align*}
  d_{V_{i}}(v)
    &= d_G(v)-d_{V_{k_0}}(v)-\sum\limits_{k\in [p]\setminus\{i,k_0\}}d_{V_{k}}(v)\\
    &\geq \left(\frac{p-1}{p}-5\eta\right)n-3\phi^3\eta n- (p-2)\left(\frac{1}{p}+\eta\right)n\\
    &\geq \left(\frac{1}{p}-4\phi^3\eta\right)n.
\end{align*}
Hence,
\begin{align*}
\left|\bigcap_{k=1}^{j}N_{V_{i}}(v_k)\right|
           &\geq  \sum_{k=1}^{j}|N_{V_{i}}(v_k)|-(j-1)|V_{i}|\\
            &>  j\left(\frac{1}{p}-4\phi^3\eta\right)n-(j-1)\left(\frac{1}{p}+\eta\right)n\\
            &>  \left(\frac{1}{p}-5\phi^5\eta\right)n.
\end{align*}
We further obtain that
$$\left|\big(\bigcap_{k=1}^{j}N_{V_{i}}(v_k)\big)\cap \big(\bigcap_{u\in W^*}N_{V_{i}}(u)\big)\right|
   >\left(\frac{1}{p}-5\phi^5\eta\right)n-(q-1)\times20\phi^4\eta n>|W^*|+\phi,$$
where the last inequality holds as $\eta<\frac{1}{50\phi^7}$ and $n$ is sufficiently large.
Then, there exist at least $\phi$ vertices in $V_i^*$ adjacent to all vertices in $W^*\cup \{v_1,\dots,v_j\}$.
\end{proof}

\begin{lem}\label{lemma3.12}
For any $i\in [p]$, $\Delta(G_{i})\leq \lambda-1$ and $e(G_i)\leq \frac12\phi^2$.
\end{lem}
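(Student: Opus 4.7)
The strategy is to prove $\Delta(G_i)\le\lambda-1$ by contradiction and then deduce the edge bound from Theorem \ref{theorem2.1}.

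Suppose for contradiction that some $u\in V_i^*$ satisfies $d_{G_i}(u)\ge\lambda$. By the definition of $\lambda$, pick $M\in\mathcal{M}^*$, an independent covering $L=\{x^*,x_2,\ldots,x_q\}$ of $M$ with $|L|=q$, and $x^*\in L$ with $d_M(x^*)=\lambda$. The key preparatory move is to replace $M$ by the graph $M'$ obtained by splitting every vertex of $V(M)\setminus L$. By Lemma \ref{lemma3.01}, $M'\in\mathcal{M}$; splitting the ``other side'' preserves $L$ as an independent covering of size $q$ and keeps $d_{M'}(x^*)=\lambda$, so $M'\in\mathcal{M}^*$; moreover, every vertex of $V(M')\setminus L$ now has $M'$-degree exactly one. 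Building the embedding is then routine: put $\varphi(x^*)=u$; fix an arbitrary bijection $\{x_2,\ldots,x_q\}\to W^*$, which exists since $|W^*|=q-1$ by Lemma \ref{lemma3.10}; pick $\lambda$ distinct vertices of $N_{G_i}(u)$ to host the $\lambda$ leaves $N_{M'}(x^*)$; and for each remaining leaf $y\in V(M')\setminus L$, whose unique $M'$-neighbor is mapped to some $w\in W^*$, choose $\varphi(y)\in V_i^*\cap N_G(w)$. Lemma \ref{lemma3.10} guarantees $|V_i^*\cap N_G(w)|\ge |V_i^*|-20\phi^5\eta n$, which vastly exceeds the constant $|V(M')|$, so the injection extends.

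Next I extend $\varphi(V(M'))$ to a copy of $F^{p+1}$. By Definition \ref{def2.1} there is a constant $t$ with $F^{p+1}\subseteq(M'\cup E_t)+T_{p-1}((p-1)t)$. Choose $t$ further vertices of $V_i^*$ disjoint from $\varphi(V(M'))$ to play the role of $E_t$. Then for $j\in[p]\setminus\{i\}$ in turn, apply Lemma \ref{lemma3.11} with $\{v_1,\ldots,v_l\}$ taken to be the already-selected vertices lying in $\bigcup_{k\ne j}V_k^*$ (whose total count stays $\le\phi^2$), obtaining $\phi$ candidates in $V_j^*$ adjacent to $W^*$ and to all previously chosen vertices; pick any $t$ of them. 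Cross-class adjacencies between different Turán parts are automatic from the iterative construction, and intra-class edges within a single Turán part are irrelevant for subgraph containment of $T_{p-1}((p-1)t)$. Hence $G$ contains $(M'\cup E_t)+T_{p-1}((p-1)t)\supseteq F^{p+1}$, contradicting the $F^{p+1}$-freeness of $G$.

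For the edge bound, a direct adaptation of Lemma \ref{lemma3.07}, with Lemma \ref{lemma3.11} replacing Lemma \ref{lemma3.06}(iii) so that the matching-killing argument runs inside $V_i^*$ rather than $\overline{V}_i$, yields $\nu(G_i)\le e(F)-1$. Combining this with $\Delta(G_i)\le\lambda-1$ through Theorem \ref{theorem2.1} gives
\[
  e(G_i)\le f\bigl(e(F)-1,\,\lambda-1\bigr)\le (e(F)-1)\lambda\le \tfrac{\phi}{2}\cdot\phi=\tfrac{\phi^2}{2}.
\]
The main obstacle is the embedding step: without first splitting $M$ into $M'$, the $\lambda$ target vertices in $N_{G_i}(u)$ would additionally have to lie in $N_G(w_k)$ for several $w_k\in W^*$ simultaneously, and this can fail when $|N_{G_i}(u)|=\lambda$ exactly and $u$'s neighbours happen to be concentrated in the small ``bad'' set of vertices non-adjacent to some $w_k$. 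The preliminary split removes these joint constraints by making every non-$L$ vertex of $M'$ a leaf imposing at most a single $w_k$-adjacency.
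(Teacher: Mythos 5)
Your proposal is correct and follows essentially the same route as the paper: split the $\lambda$-realizing $M\in\mathcal{M}^*$ outside its size-$q$ independent covering to obtain a star forest in $\mathcal{M}$ containing an $S_{\lambda+1}$, embed its centers at the high-degree vertex $u$ and at $W^*$ (using Lemma \ref{lemma3.10}), extend to $F^{p+1}$ via Definition \ref{def2.1} by iterating Lemma \ref{lemma3.11}, and then combine $\nu(G_i)\le e(F)-1$ (the Lemma \ref{lemma3.07} argument run inside $V_i^*$) with Theorem \ref{theorem2.1}. The only cosmetic differences are bookkeeping choices (leaves picked per center rather than adjacent to all of $W^*$, and $t$ rather than $\phi$ vertices per class), which do not change the argument.
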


\begin{proof}
By symmetry, we may assume without loss of generality that $i=1$.
By the definition of $\lambda$ and $\mathcal{M}^*$,
there exists an independent covering $L^*$ of some $F^*\in \mathcal{M}^*$ such that $|L^*|=q$ and $\lambda=\min\{d_{F^*}(x)~|~x\in L^*\}$.
More precisely, assume that
$$L^*=\{u_1^*,u_2^*,\dots,u_q^*\}~~\text{and}~~d_{F^*}(u_q^*)=\lambda.$$
Since $L^*$ is an independent covering of $F^*$, $F^*-L^*$ is an empty graph.
Let $F^{**}$ be the graph obtained from $F^*$ by splitting all vertices in $V(F^*)\setminus L^*$.
By Lemma \ref{lemma3.01}, $F^{**}\in \mathcal{M}^*$.
We can further observe that $F^{**}$ is a star forest, in which one of the stars is $S_{\lambda+1}$.

We first show that $\Delta(G_{in})\leq \lambda-1$.
Suppose to the contrary, then we may assume without loss of generality that
there exists a vertex $u_q\in V(G_1)$ with $d_{G_1}(u_q)\geq \lambda$ and
let $u_{1,1},u_{1,2},\dots,u_{1,\lambda}\in N_{G_1}(u_q)$.
By Lemma \ref{lemma3.11}, there exists a subset
  $\widehat{V}_1=\{u_{1,1},u_{1,2},\dots,u_{1,\phi}\}\subseteq V_1^*$
such that all vertices in $\widehat{V}_1\setminus \{u_{1,1},u_{1,2},\dots,u_{1,\lambda}\}$ adjacent to all vertices in $W^*$.
Thus, $G[W^*\cup \widehat{V}_1]$ contains a copy of $F^{**}$.
Recursively applying Lemma \ref{lemma3.11}, we can select a sequence of subsets $\widehat{V}_2,\dots,\widehat{V}_p$
such that  $\widehat{V}_k=\{u_{k,1},u_{k,2},\dots,u_{k,\phi}\}\subseteq V_k^*$, and all vertices in $\widehat{V}_k$ are adjacent to all vertices in $W^*\cup(\bigcup_{i=1}^{k-1}\widehat{V}_i)$ for all $k\in \{2,\dots,p\}$.
By Definition \ref{def2.1}, $G[W^*\cup (\bigcup_{i=1}^{p}\widehat{V}_i)]$ contains a copy of $F^{p+1}$, a contradiction.
Hence,  $\Delta(G_{in})\leq \lambda-1$.
By a similar discussion as in the proof of Lemma \ref{lemma3.07}, $\nu(G_{i})\leq e(F)-1\leq \frac{\phi}{2}-1$.
Combining this with Theorem \ref{theorem2.1}, we have $e(G_i)\leq f(e(F)-1,\Delta(G_i))\leq \frac12\phi^2$.
\end{proof}

\begin{lem}\label{lemma3.13}
If $q\geq 2$, then $e(W^*)\leq e(Q_{q-1})$.
\end{lem}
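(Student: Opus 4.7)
The plan is to split into two cases according to whether $\beta=q$ or $\beta<q$. In the trivial case $\beta=q$ we have $\mathcal{B}=\{K_q\}$, so by Tur\'{a}n's theorem $Q_{q-1}\cong K_{q-1}$ and $e(Q_{q-1})=\binom{q-1}{2}$. Since Lemma \ref{lemma3.10} gives $|W^*|=q-1<q$, the graph $G[W^*]$ is automatically $K_q$-free, so $e(W^*)\leq\binom{q-1}{2}=e(Q_{q-1})$ is immediate.

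Henceforth assume $\beta<q$. I argue by contradiction: suppose $e(W^*)>e(Q_{q-1})$. Since $Q_{q-1}\in{\rm EX}(q-1,\mathcal{B})$ and $|W^*|=q-1$, the graph $G[W^*]$ is not $\mathcal{B}$-free, and so contains an isomorphic copy of some $B\in\mathcal{B}$. By the very definition of $\mathcal{B}$, we may write $B\cong M[S]$ for some $M\in\mathcal{M}$ and some covering $S$ of $M$ with $|S|<q$, so in fact $|S|\leq q-1=|W^*|$. The key observation I will use is that $S$ being a covering forces $V(M)\setminus S$ to be independent in $M$, and every edge of $M$ incident to a vertex in $V(M)\setminus S$ goes into $S$.

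The main step is to extend the copy of $M[S]$ sitting inside $W^*$ first to a copy of $M$ in $G$, and then to a copy of $F^{p+1}$, following the same pattern used in Lemma \ref{lemma3.12}. By Lemma \ref{lemma3.10}, every vertex of $W^*$ has at least $(1-20\phi^5\eta)n$ neighbors in $G$, so for any subset $T\subseteq S\subseteq W^*$, the common neighborhood of $T$ has size at least $n-|T|\cdot 20\phi^5\eta n$, which, since $|V(M)|$ is a constant, greatly exceeds the number of vertices needed. Hence I can greedily select, for each $v\in V(M)\setminus S$, a distinct vertex $z_v\in V(G)\setminus W^*$ adjacent to the images of $N_M(v)\subseteq S$, thereby producing a copy of $M$ in $G$. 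Then, since $M\in\mathcal{M}$, Definition \ref{def2.1} supplies a constant $t$ with $F^{p+1}\subseteq (M\cup E_t)+T_{p-1}((p-1)t)$. Applying Lemma \ref{lemma3.11} iteratively in the same manner as in the proof of Lemma \ref{lemma3.12}, I select $t$-subsets $\widehat{V}_{1},\ldots,\widehat{V}_{p-1}$ from $p-1$ distinct classes $V_{i}^{*}$ and an additional $t$-subset to serve as $E_t$, each added set consisting of vertices adjacent to all vertices in $W^*$ and to all previously chosen vertices. Together with the copy of $M$ already in place, this yields a copy of $(M\cup E_t)+T_{p-1}((p-1)t)$, and hence of $F^{p+1}$, inside $G$, contradicting that $G$ is $F^{p+1}$-free.

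The main obstacle I expect is not the counting but the bookkeeping: one must ensure the greedy choice of images of $V(M)\setminus S$ is compatible with the subsequent construction of $E_t$ and of the Tur\'{a}n structure $T_{p-1}((p-1)t)$, so that each newly added vertex lies in a previously unused part $V_j^*$, avoids the already chosen vertices, and is adjacent to all of $W^*$ together with the current partial construction. This is handled by repeated applications of Lemma \ref{lemma3.11} with constantly bounded subset sizes, exactly as in the proof of Lemma \ref{lemma3.12}, whose argument this step mirrors.
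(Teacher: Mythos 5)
Your argument is correct, but it follows a genuinely different route from the paper. The paper first proves an auxiliary claim (Claim \ref{claim3.4}) using the spectral extremality of $G$: every vertex that is isolated in some $G_i$ must be adjacent to \emph{all} vertices outside $V(G_i)$, since otherwise adding the missing edge preserves $F^{p+1}$-freeness (via Lemma \ref{lemma3.11}) and raises the spectral radius. It then selects, inside the common neighborhood of $W^*$, huge sets $I_i^*\subseteq V_i^*$ of such isolated vertices, so that $G[W^*\cup\bigcup_i I_i^*]$ contains $G[W^*]$ completely joined to a balanced complete $p$-partite graph with linear-size classes; the hypothesis $e(W^*)>e(Q_{q-1})$ then forces a copy of $F^{p+1}$ in one sweep. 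You instead stay at the level of constant-size embeddings: you extract a copy of $B\cong M[S]\in\mathcal{B}$ from $G[W^*]$ (making explicit the step the paper leaves implicit, and correctly disposing of the trivial case $\beta=q$, where $\mathcal{B}=\{K_q\}$ and $|W^*|=q-1$ makes the inequality automatic), extend it to $M$ using the degree bound of Lemma \ref{lemma3.10}, and then build the join structure $(M\cup E_t)+T_{p-1}((p-1)t)$ by iterating Lemma \ref{lemma3.11} as in Lemma \ref{lemma3.12}. Your version avoids the extra spectral perturbation argument of Claim \ref{claim3.4} (which the paper reuses later in Lemma \ref{lemma3.14} anyway), at the cost of more delicate embedding bookkeeping; the paper's version gets the final containment almost for free from Yuan's framework once the completely joined structure is in place.

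One point in your bookkeeping needs tightening. Since Lemma \ref{lemma3.11} can only guarantee adjacency from newly chosen vertices of $V_i^*$ to previously chosen vertices lying in \emph{other} classes, every vertex of the embedded $M$ (in particular each $z_v$, $v\in V(M)\setminus S$) must avoid the $p-1$ classes that will host $T_{p-1}((p-1)t)$, because in $(M\cup E_t)+T_{p-1}((p-1)t)$ these vertices are joined to the entire Tur\'{a}n part. Your prescription that ``each newly added vertex lies in a previously unused part'' cannot be carried out literally when $|V(M)\setminus S|\geq 2$, as only $p$ parts are available. The fix is immediate: place all of the $z_v$'s (and the $E_t$ vertices) in one fixed class, say inside $\bigcap_{u\in W^*}N_{V_1}(u)\cap V_1^*$, which is large by Lemma \ref{lemma3.10}, and then choose the $p-1$ Tur\'{a}n classes from $V_2^*,\dots,V_p^*$ by Lemma \ref{lemma3.11}, including the already chosen vertices among the constraint set. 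With this adjustment your proof is complete.
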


\begin{proof}
We first give a claim.
\begin{claim}\label{claim3.4}
For any $i\in [p]$, any isolated vertex $u_0$ of $G_i$ is adjacent to all vertices in $V(G)\setminus V(G_i)$.
\end{claim}

\begin{proof}
Otherwise,
let $G'$ be the graph obtained from $G$ by joining an edge between $u_0$ and some vertex $v_1\in (V(G)\setminus V(G_i))\setminus N_G(u_0)$.
Clearly, $\rho(G')>\rho(G)$.

We shall prove that $G'$ is $F^{p+1}$-free.
Otherwise, $G'$ contains a subgraph $H$ isomorphic to $F^{p+1}$.
From the construction of $G'$,
we can see that $u_0\in V(H)$.
Assume that $N_{H}(u_0)=\{u_1,u_2,\dots,u_a\}$,
then $a\leq \phi-1$ and $u_1,u_2,\dots,u_a\in (\bigcup_{k\in [p]\setminus\{i\}}V(G_k))\cup W^*$ by the definition of $G'$.
By Lemma \ref{lemma3.11}, we can select a vertex $u\in V_i^*\setminus V(H)$ adjacent to $u_1,u_2,\dots,u_a$.
Thus $G[(V(H)\setminus \{u_0\})\cup\{u\}]$ contains a copy of $F^{p+1}$, a contradiction.
Therefore, $G'$ is $F^{p+1}$-free.
However, this contradicts the fact that $G$ is an extremal graph for ${\rm spex}(n,F^{p+1})$.
\end{proof}

Suppose to the contrary that $e(W^*)>e(Q_{q-1})$.
Lemma \ref{lemma3.03} gives that $\big||V_i|-\frac{n}{p}\big|\le\eta n$ for any $i\in [p]$.
Let $\widetilde{V}_i$ be the set of vertices of degree at least one in $G_i$ for each $i\in [p]$.
By Lemma \ref{lemma3.12}, $e(G_i)\leq \frac12\phi^2$,
  and so $|\widetilde{V}_i|\leq 2e(G_i)\leq \phi^2$.
Moreover, by Lemma \ref{lemma3.10}, $d_G(u)\geq (1-20\phi^5\eta )n$ for any $u\in W^*$.
Then,
$$\left|\bigcap_{u\in W^*}N_{V_i}(u)\right|
   \geq |V_i|-(q-1)\times20\phi^5\eta n\geq \frac{n}{p}-21\phi^6\eta n\geq |W^*|+|\widetilde{V}_i|+\frac{n}{2p},$$
where the last inequality holds as $\eta<\frac{1}{50\phi^7}$ and $n$ is sufficiently large.
It follows that there exists a subset $I_i^*\subseteq \left(\bigcap_{u\in W^*}N_{V_i}(u)\right)\setminus (W^*\cup \widetilde{V}_i)$ with $|I_i^*|=\lfloor\frac{n}{2p}\rfloor$.

Let $G'$ be the graph induced by $W^*\cup \big(\bigcup_{i=1}^{p}I_i^*\big)$.
By Claim \ref{claim3.4}, for any $i\in [p]$, every vertex in $I_i^*$ is adjacent to all vertices in
$W^*\cup \big(\bigcup_{j\in [p]\setminus\{i\}}I_j^*\big)$.
Moreover, since $e(W^*)>e(Q_{q-1})$, by the definition of $Q_{q-1}$ we can see that $G'$ contains a copy of $F^{p+1}$.
Consequently, $G$ also contains a copy of $F^{p+1}$ as $G'\subset G$,
which gives a contradiction.
The proof is complete.
\end{proof}

\begin{lem}\label{lemma3.14}
Assume without loss of generality that $n_1\geq n_2\geq \cdots \geq n_p$, where $n_i:=|V(G_i)|$ for each $i\in [p]$. Then $n_1-n_p\leq 1$.
\end{lem}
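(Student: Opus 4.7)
Assume for contradiction that $n_1 \geq n_p + 2$. The plan is to construct a graph $G'$ from $G$ by relocating a carefully chosen vertex $u_0 \in V_1^*$ to the role of a $V_p^*$-vertex, verify that $G'$ is still $F^{p+1}$-free, and then show $\rho(G') > \rho(G)$, contradicting the extremality of $G$. First I would choose $u_0 \in V_1^*$ with $d_{G_1}(u_0) = 0$; such a vertex exists because Lemma \ref{lemma3.12} bounds $e(G_1) \leq \tfrac{1}{2}\phi^2$, so $V_1^*$ contains at least $n_1 - \phi^2$ vertices isolated in $G_1$, a positive number for large $n$. Claim \ref{claim3.4} (from the proof of Lemma \ref{lemma3.13}) then yields $N_G(u_0) = V(G) \setminus V_1^*$, so in particular $u_0$ is adjacent to every vertex of $V_p^*$. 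I then let $G'$ be obtained from $G$ by deleting the $n_p$ edges from $u_0$ to $V_p^*$ and adding the $n_1 - 1$ edges from $u_0$ to $V_1^* \setminus \{u_0\}$, so that $N_{G'}(u_0) = V(G) \setminus V_p^* \setminus \{u_0\}$.

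To verify $G'$ is $F^{p+1}$-free, suppose a copy $H$ of $F^{p+1}$ lies in $G'$. Since only $u_0$'s incidences have been altered, $u_0$ must belong to $V(H)$. Lemma \ref{lemma3.12} guarantees at least $n_p - \phi^2$ vertices of $V_p^*$ are isolated in $G_p$, so I can pick one, say $v_0 \in V_p^* \setminus V(H)$ (possible because $|V(H)| \leq |V(F^{p+1})| \leq \phi$ while $n_p - \phi^2 \gg \phi$ for large $n$). By Claim \ref{claim3.4}, $N_G(v_0) = V(G) \setminus V_p^* \supseteq N_{G'}(u_0) \supseteq N_H(u_0)$, so substituting $v_0$ for $u_0$ inside $H$ produces a copy of $F^{p+1}$ in $G$, contradicting its $F^{p+1}$-freeness.

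For the spectral step, the Rayleigh principle yields
\[
\rho(G') - \rho(G) \;\geq\; X^{\mathrm{T}}(A(G')-A(G))X \;=\; 2 x_{u_0}\bigl(\Sigma_1 - \Sigma_p - x_{u_0}\bigr),
\]
where $\Sigma_i = \sum_{v \in V_i^*} x_v$, so the task is to show $\Sigma_1 - \Sigma_p > x_{u_0}$. The key ingredient is that for every vertex $u$ of $V_i^*$ isolated in $G_i$, Claim \ref{claim3.4} combined with the eigenvalue equation gives the \emph{exact} identity $\rho(G) x_u = \Sigma - \Sigma_i$ (with $\Sigma = \sum_v x_v$), so all such vertices of $V_i^*$ share a common entry $\alpha_i = (\Sigma - T_i)/(\rho(G) + N_i)$, where $N_i = n_i - |\widetilde{V}_i|$ and $T_i = \sum_{w \in \widetilde{V}_i} x_w$. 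After a short algebraic manipulation, the inequality $\Sigma_1 - \Sigma_p > x_{u_0}$ reduces to
\[
\rho(G)\Sigma(N_1 - N_p - 1) - \Sigma N_p + T_1(\rho(G)+1)(\rho(G)+N_p) - T_p\rho(G)(\rho(G)+N_1) \;>\; 0.
\]
The main obstacle is this algebraic estimate. The naive entrywise bound $x_v \in [(1-7\eta)x_{v^*}, x_{v^*}]$ from Lemma \ref{lemma3.09} loses an $O(\eta n)$ slack over $V_p^*$ that swamps the one-vertex gain, so I must instead exploit the exact identity for $\alpha_i$ together with $|\widetilde{V}_i| \leq \phi^2$ and $T_i \leq \phi^2 x_{v^*}$ from Lemma \ref{lemma3.12}, and the asymptotics $\rho(G) \sim (p-1)n/p$ and $N_i \sim n/p$ from Lemmas \ref{lemma3.02}, \ref{lemma3.03}, and \ref{lemma3.10}. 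In the clean case $T_1 = T_p = 0$ the leading term $\rho(G)\Sigma(N_1 - N_p - 1)$ exceeds $\Sigma N_p$ by an amount of order $\tfrac{p-2}{p}\, n^2 x_{v^*}$, which is strictly positive since $p \geq \chi(F)+1 \geq 3$; the $T_i$-correction terms must then be carefully absorbed using the inner-edge restrictions $\Delta(G_i) \leq \lambda - 1$ and the earlier structural conclusions from Lemmas \ref{lemma3.07}--\ref{lemma3.13}.
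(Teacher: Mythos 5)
Your construction and overall strategy are the same as the paper's: take a vertex $u_0$ isolated in $G_1$, delete its edges to $V_p^*$ and join it to $V_1^*\setminus\{u_0\}$, check $F^{p+1}$-freeness, and compare Rayleigh quotients; your substitution of an isolated vertex of $G_p$ for $u_0$ is a correct way of making explicit the freeness check that the paper only asserts, and you correctly identify the crucial point that the crude bounds of Lemma \ref{lemma3.09} lose $O(\eta n)$ and that one must instead use the exact eigen-equation for isolated vertices coming from Claim \ref{claim3.4}. Your reduced inequality is also algebraically correct.

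The gap is in the final estimate, which you leave as a sketch and which, in the additive $N_i,T_i$ form you chose, does not follow from the lemmas you cite. Writing $m_i=|\widetilde{V}_i|$, the dangerous pairing in your expression is $+\Sigma\,\rho(G)\,m_p$ (arising from $N_1-N_p-1$ when $\widetilde{V}_p\neq\varnothing$) against $-T_p\,\rho(G)(\rho(G)+N_1)$: since $T_p$ can be as large as $m_p x_{v^*}$, nonnegativity of this block requires $\Sigma\gtrsim x_{v^*}(\rho(G)+N_1)$, i.e.\ an \emph{upper} bound $\rho(G)\le\big(\frac{p-1}{p}+O(\eta)\big)n$. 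Lemma \ref{lemma3.02} provides only a lower bound, and from $e(G)\approx\frac{p-1}{2p}n^2$ alone $\rho(G)$ could a priori be as large as $\sqrt{(p-1)/p}\,n$, in which case this block would be of order $-m_p n^2x_{v^*}$ and could swamp the main gain $\frac{p-2}{p}n^2x_{v^*}$ when $m_p$ is of order $\phi^2$. This is repairable: an upper bound $\rho(G)\le\big(\frac{p-1}{p}+O(\eta)\big)n$ can be extracted from the eigen-equation at a vertex $z\in V_i^*$ with $x_z\ge(1-7\eta)x_{v^*}$, using $d_G(z)\le n-|V_i|+(q-1)+\Delta(G_i)$ together with Lemmas \ref{lemma3.03}, \ref{lemma3.09} and \ref{lemma3.12} — but this step is missing from your plan. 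The paper's bookkeeping avoids the issue entirely: it writes $\sum_{v\in V_i^*}x_v=(n_i+\epsilon_i)x_i$ with $|\epsilon_i|<8\phi^2\eta$, so that $x_i=\big(\sum_{v}x_v\big)/(\rho(G)+n_i^*)$, and the comparison $(n_1^*-1)x_1>n_p^*x_p$ then reduces to $(n_1^*-n_p^*-1)\rho(G)>n_p^*$, which needs only $n_1^*-n_p^*-1>\frac23$ and the lower bound $\rho(G)>\frac{3n}{2p}$, with no upper bound on $\rho(G)$ or on $\Sigma$ required.
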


\begin{proof}
Suppose to the contrary that $n_1-n_p\geq 2$.
Choose an arbitrary integer $i\in [p]$.
By Claim \ref{claim3.4}, we may assume $x_u=x_i$ for each isolated vertex $u$ in $G_i$.
For any $v\in \widetilde{V}_i$,
by Lemma \ref{lemma3.09}, $(1-7\eta)x_{v^*}<x_v,x_i\leq x_{v^*}$.
This implies that
\begin{align*}
1-7\eta=\frac{(1-7\eta)x_{v^*}}{x_{v^*}}\leq \frac{x_{v}}{x_i}
            \leq \frac{x_{v^*}}{(1-7\eta)x_{v^*}}< 1+8\eta.
\end{align*}
By Lemma \ref{lemma3.12}, $|\widetilde{V}_i|\leq 2e(G_i)\leq {\phi^2}$ for any $i\in [p]$. Then
$$\big(|\widetilde{V}_i|-8\phi^2\eta\big)x_i< |\widetilde{V}_i|(1-7\eta)x_i \leq \sum_{v\in \widetilde{V}_i}x_v< |\widetilde{V}_i|(1+8\eta)x_i\leq \big(|\widetilde{V}_i|+8\phi^2\eta \big)x_i.$$
Then there exists a constant $\epsilon_i\in (-8\phi^2\eta,8\phi^2\eta)$ such that
\begin{align*}
\sum_{v\in \widetilde{V}_i}x_v= \big(|\widetilde{V}_i|+\epsilon_i\big)x_i.
\end{align*}
Hence,
\begin{align}\label{align0012}
\sum_{v\in V_i^*}x_v= (n_i-|\widetilde{V}_i|)x_i+\big(|\widetilde{V}_i|+\epsilon_i\big)x_i=(n_i+\epsilon_i\big)x_i.
\end{align}
By Claim \ref{claim3.4}, $\rho(G)x_i=\sum_{v\in V(G)\setminus V_i^*}x_v$. Consequently,
\begin{align*}
(\rho(G)+n_i+\epsilon_i)x_i=\sum_{v\in V(G)}x_v.
\end{align*}
Set $n_i^*=n_i+\varepsilon_i$ for each $i\in [p]$.
Recall that $\eta<\frac{1}{50\phi^7}$. Then,
\begin{align}\label{align0007}
n_1^*-n_p^*=(n_1-n_p)+(\epsilon_1-\epsilon_p)> 2-16\phi^2\eta>\frac53,
\end{align}
and
\begin{align}\label{align0008}
x_1=\frac{\sum_{v\in V(G)}x_v}{\rho(G)+n_1^*}~~\text{and}
~~x_p=\frac{\sum_{v\in V(G)}x_v}{\rho(G)+n_p^*}.
\end{align}
Choose an isolated vertex $u_0$ in $G_1$. So, $x_{u_0}=x_1$.
Let $G'$ be the graph obtained from $G$ by deleting edges
from $u_0$ to $V_p^*$
and adding all edges from $u_0$ to $V_1^*\setminus \{u_0\}$.
It is not hard to verify that $G'$ is still $F^{p+1}$-free.
However, in view of \eqref{align0007} and \eqref{align0008}, we get
\begin{center}
 $(n_1^*-1)x_1-n_p^*x_p=
\frac{(n_1^*-n_p^*-1)\rho(G)-n_p^*}{(\rho(G)+n_1^*)(\rho(G)+n_p^*)}\cdot\sum_{v\in V(G)}x_v>\frac{\frac23\rho(G)-\frac{n}{p}}{(\rho(G)+n_1^*)(\rho(G)+n_p^*)}\cdot\sum_{v\in V(G)}x_v>0,
$
\end{center}
where the last inequality holds as $p\geq 3$ and $\rho(G)>(\frac{p-1}{p}n-\frac{4p}{n})>\frac{3n}{2p}$.
This, together with \eqref{align0012}, leads to that
\begin{center}
 $\rho(G')-\rho(G)\ge
\sum\limits_{v\in V_1^*\setminus \{u_0\}}2x_{u_0}x_v-
   \sum\limits_{v\in V_p^*}2x_{u_0}x_v=2x_{u_0}\big((n_1^*-1)x_1-n_p^*x_p\big)>0,$
\end{center}
a contradiction.
Therefore, $n_1-n_p\leq 1$.
\end{proof}

Note that $|W^*|=q-1$ (see Lemma \ref{lemma3.10}) and $n_1-n_p\leq 1$ (see Lemma \ref{lemma3.14}).
Let $G_{cr}$ be the graph obtained from $G$ by deleting all edges in $E(W^*)\cup E(G_{in})$.
Then, $G_{cr}$ is a spanning subgraph of $H(n,p,q)$.
Let $G_{m}$ be the graph consisting of edges in $E(H(n,p,q))\setminus E(G_{cr})$
(it is possible that $G_m$ is empty).
Now we complete the proof of Theorem \ref{theorem2.2} for $q\geq 2$
( the proof for $q=1$ is similar and hence omitted here).
\begin{proof}
By the definition of $H(n,p,q,\lambda-1,\mathcal{B})$, we can see that
%can be obtained from $G$ by deleting all edges in $G(W^*)$
%and embedding a copy of $Q_{q-1}$ in $G(W^*)$,
%deleting all edges in $G_{in}$ and embedding a copy of $D_{\lambda-1}\in \mathcal{D}_{\lambda-1}$ in one class of $T_p(n-q+1)$.
%$\cup E(G_{in})$
%and adding all edges in $E(Q_{q-1})$ and embedding a copy of
%$\cup E(D_{\lambda-1})\cup E(E_m)$. Then
\begin{align}\label{align0009}
e(H(n,p,q,\lambda-1,\mathcal{B}))-e(G)=(e(D_{\lambda-1})+e(G_m)-e(G_{in}))+(e(Q_{q-1})-e(W^*)).
\end{align}
By Lemma \ref{lemma3.12}, $e(G_{in})\leq \sum_{i=1}^{p}e(G_i)\leq \frac12 p\phi^2$.
Combining Lemma \ref{lemma3.13}, we obtain
\begin{align}\label{align0010}
e(W^*)-e(Q_{q-1})+e(G_{in})\leq e(G_{in})\leq \frac12 p\phi^2.
\end{align}

\begin{claim}\label{claim3.5}
If $e(G)<e(H(n,p,q,\lambda-1,\mathcal{B}))$, then $\rho(G)<\rho(H(n,p,q,\lambda-1,\mathcal{B}))$.
\end{claim}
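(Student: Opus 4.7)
The plan is to compare $\rho(H^*)$ and $\rho(G)$ via the Rayleigh quotient applied to the Perron unit vector $X$ of $G$ (so $X^{\mathrm T}X=1$). I would first identify the vertex set of $H^*:=H(n,p,q,\lambda-1,\mathcal{B})$ with $V(G)$ by a natural embedding: place the $q-1$ apex vertices of $H^*$ on $W^*$, which has cardinality exactly $q-1$ by Lemma~\ref{lemma3.10}; place the $p$ partite classes of $T_p(n-q+1)$ on $V_1^*,\ldots,V_p^*$, whose sizes differ pairwise by at most one by Lemma~\ref{lemma3.14}; and embed a copy of $D_{\lambda-1}$ inside $V_1^*$ and a copy of $Q_{q-1}$ inside $W^*$. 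Then the Rayleigh bound gives
\[
\rho(H^*)-\rho(G)\ \geq\ X^{\mathrm T}\bigl(A(H^*)-A(G)\bigr)X\ =\ 2\!\!\sum_{uv\in E(H^*)\setminus E(G)}\!\!x_ux_v\ -\ 2\!\!\sum_{uv\in E(G)\setminus E(H^*)}\!\!x_ux_v.
\]

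Under this embedding the symmetric difference $E(H^*)\triangle E(G)$ splits into three disjoint parts: the missing cross edges $G_m$, all lying in $E(H^*)\setminus E(G)$; the within-class difference $E(D_{\lambda-1})\triangle E(G_{in})$; and the within-apex difference $E(Q_{q-1})\triangle E(W^*)$. Call the last two Perron-weighted pieces $S_{\rm int}$ and $S_W$. The next step is to use the near-uniformity of $X$: Lemmas~\ref{lemma3.09} and~\ref{lemma3.10} give $x_v\geq (1-O(\phi^5\eta))x_{u^*}$ for every vertex, so each product $x_ux_v$ lies in a narrow band of width $O(\phi^5\eta)x_{u^*}^2$ around $x_{u^*}^2$. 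This yields $\sum_{uv\in G_m}x_ux_v\geq(1-O(\phi^5\eta))|G_m|x_{u^*}^2$; Lemma~\ref{lemma3.13} and the apex bound give $S_W\geq(e(Q_{q-1})-e(W^*))x_{u^*}^2-O(\phi^5\eta)x_{u^*}^2$; and Lemma~\ref{lemma3.12} together with the constancy of $e(D_{\lambda-1})$ gives $|S_{\rm int}|=O(\phi^2)x_{u^*}^2$.

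To close the argument I would combine these three bounds with the edge identity (\ref{align0009}) rewritten as $|G_m|+(e(D_{\lambda-1})-e(G_{in}))+(e(Q_{q-1})-e(W^*))=e(H^*)-e(G)\geq 1$ under the hypothesis. After factoring out $x_{u^*}^2$ one obtains a lower bound of the form
\[
\rho(H^*)-\rho(G)\ \geq\ 2x_{u^*}^2\Bigl[\bigl(e(H^*)-e(G)\bigr)-C\phi^5\eta\bigl(|G_m|+e(G_{in})+e(Q_{q-1})\bigr)\Bigr]
\]
for an absolute constant $C$, and since $e(H^*)-e(G)\geq 1$ while the bracketed error is dominated either by the extra multiplicative factor coming from $|G_m|$ being paired with a matching gap in (\ref{align0009}) or, when $|G_m|$ is small, by the uniformly bounded internal terms, one concludes positivity for $\eta<1/(50\phi^7)$ and $n$ large.

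The main obstacle I foresee is the internal term $S_{\rm int}$: if $e(G_{in})>e(D_{\lambda-1})$ then $S_{\rm int}$ is genuinely negative, so one cannot handle (a)--(c) separately. The resolution is to treat the three pieces jointly through the identity (\ref{align0009}) rather than term-by-term, and to use the fact that any shortfall in $S_{\rm int}$ is forced, via (\ref{align0009}) and $e(G)<e(H^*)$, to be over-compensated by $|G_m|$ and the apex gap. A secondary technical point is controlling $x_v$ for the $O(\eta n)$ vertices in $W\setminus W^*$, for which Lemma~\ref{lemma3.09} does not directly apply; but since these vertices are few and their entries are still bounded above by $x_{u^*}$, their contribution to the error is absorbed into the $O(\phi^5\eta)x_{u^*}^2$ distortion.
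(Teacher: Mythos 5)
Your overall route is the same as the paper's (Rayleigh quotient $X^{\mathrm T}(A(H^*)-A(G))X$ with the Perron vector of $G$, the decomposition into $G_m$, $E(D_{\lambda-1})\triangle E(G_{in})$ and $E(Q_{q-1})\triangle E(W^*)$, and the joint use of the identity \eqref{align0009} with Lemmas \ref{lemma3.12} and \ref{lemma3.13}), but the quantitative core of your argument rests on a false premise. You assert that Lemmas \ref{lemma3.09} and \ref{lemma3.10} give $x_v\geq(1-O(\phi^5\eta))x_{u^*}$ for \emph{every} vertex, so that all products $x_ux_v$ sit in an $O(\phi^5\eta)x_{u^*}^2$-band around $x_{u^*}^2$. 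Lemma \ref{lemma3.09} gives no such thing: for $v\notin W$ it only yields $x_v\geq(1-7\eta)x_{v^*}$ and $x_v\geq\bigl(\frac{p-1}{p}-8\eta\bigr)x_{u^*}$, i.e.\ ordinary vertices may sit a constant factor $\approx\frac{p-1}{p}$ below $x_{u^*}$ (and in the conjectured extremal graphs they genuinely do); only the $q-1$ vertices of $W^*$ are within $1-O(\phi^5\eta)$ of $x_{u^*}$ (Lemma \ref{lemma3.10}). Hence your bound $\sum_{uv\in G_m}x_ux_v\geq(1-O(\phi^5\eta))|G_m|x_{u^*}^2$ and your final display $\rho(H^*)-\rho(G)\geq 2x_{u^*}^2\bigl[(e(H^*)-e(G))-C\phi^5\eta(\cdots)\bigr]$ are unjustified: with the correct entry bounds the positive cross-edge products are only about $\frac{(p-1)^2}{p^2}x_{u^*}^2$, while a deleted edge of $G_{in}$ can a priori weigh up to $x_{u^*}^2$, and since $e(G_{in})$ can be a constant as large as $\frac12 p\phi^2$ while the net edge surplus from \eqref{align0009} may be only $1$, a uniform normalization at scale $x_{u^*}^2$ does not give positivity (e.g.\ $51\cdot\frac{4}{9}<50$ for $p=3$).

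The repair is to keep the two scales separate, which is exactly what the paper does: compare $E(D_{\lambda-1})\cup E(G_m)$ against $E(G_{in})$ at the scale $x_{v^*}^2$ (positive products $\geq(1-8\eta)^2x_{v^*}^2$ by Lemma \ref{lemma3.09}, negative interior products bounded at essentially the same scale), compare $E(Q_{q-1})$ against $E(W^*)$ at the scale $x_{u^*}^2$ where Lemma \ref{lemma3.13} guarantees the count difference is nonnegative, and then absorb the $1-O(\eta)$ multiplicative losses using \eqref{align0009} together with the constant bound $e(W^*)-e(Q_{q-1})+e(G_{in})\leq\frac12 p\phi^2$ from \eqref{align0010}. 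Your closing remark that the vertices of $W\setminus W^*$ cause only an $O(\phi^5\eta)x_{u^*}^2$ distortion ``because they are few'' also does not work as stated: what matters is not how many such vertices there are but that each edge of $G_{in}$ meeting them could carry weight a constant fraction above $x_{v^*}^2$, and there may be constantly many such edges; this must be handled by bounding the entries of the relevant interior vertices near $x_{v^*}$ (or by building extra constant slack into the count inequality), not by an $O(\eta)$ error term.
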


\begin{proof}
If $e(H(n,p,q,\lambda-1,\mathcal{B}))-e(G)\geq 1$,
then by \eqref{align0009} and \eqref{align0010},
     %$$e(D_{\lambda-1})+e(E_m)\geq e(W^*)-e(Q_{q-1})+e(G_{in})+1.$$
%Consequently,
\begin{align*}
(e(D_{\lambda-1})+e(G_m))(1-8\eta)^2
  &\geq  \big(e(W^*)-e(Q_{q-1})+e(G_{in})+1\big)(1-16\eta)\\
  &\geq \big(e(W^*)-e(Q_{q-1})+e(G_{in})\big)-16\eta\cdot \frac12 p\phi^2+(1-16\eta) \\
  &>  e(W^*)-e(Q_{q-1})+e(G_{in})+\frac{2}{5},
\end{align*}
where the last inequality holds as $\eta<\frac{1}{50\phi^7}$.
By Lemma \ref{lemma3.09}, we have
\begin{align*}
\sum_{uv\in E(D_{\lambda-1})\cup E(G_{m})}x_{u}x_{v}-\sum_{uv\in E(G_{in})}x_{u}x_{v}
  &\geq  \big((e(D_{\lambda-1})+e(G_{m}))(1-8\eta)^2-e(G_{in})\big)x_{v^*}^2\\
  &>\big( e(W^*)-e(Q_{q-1})+\frac{2}{5}\big)\frac{1}{4}x_{u^*}^2\\
  &> \big( e(W^*)-e(Q_{q-1})\big)\frac{1}{4}x_{u^*}^2 +\frac{1}{10}x_{u^*}^2.
\end{align*}
Note that $e(Q_{q-1})\leq \binom{q-1}{2}\leq \frac{\phi^2}{2}$. By Lemma \ref{lemma3.10}, we have
\begin{align*}
\sum_{uv\in E(Q_{q-1})}x_{u}x_{v}-\sum_{uv\in E(W^*)}x_{u}x_{v}
  &\geq  \big(e(Q_{q-1})(1-\eta)^2-e(W^*)\big)x_{u^*}^2\\
  &>\big(e(Q_{q-1})-e(W^*)-2\eta e(Q_{q-1}) \big)x_{u^*}^2\\
  &> \big(e(Q_{q-1})-e(W^*)\big)x_{u^*}^2-\frac{1}{20}x_{u^*}^2.
\end{align*}
Combining the above two inequalities, we have
\begin{align*}
    &\rho(H(n,p,q,\lambda-1,\mathcal{B}))-\rho(G)\\
    &\geq  {X^{\mathrm{T}}(A(H(n,p,q,\lambda-1,\mathcal{B}))-A(G))X}\\
    &\geq  2\left(\sum_{uv\in E(Q_{q-1})}x_{u}x_{v}-\sum_{uv\in E(W^*)}x_{u}x_{v}\right)
           +2\left(\sum_{uv\in E(D_{\lambda-1})\cup E(G_{m})}x_{u}x_{v}-\sum_{uv\in E(G_{in})}x_{u}x_{v}\right) \nonumber\\
  &\geq \big(e(Q_{q-1})-e(W^*)\big)\frac{3}{2}x_{u^*}^2+\frac{1}{10}x_{u^*}^2>0,
\end{align*}
where the last inequality holds as $e(Q_{q-1})\geq e(W^*)$ by Lemma \ref{lemma3.13}. The result follows.
\end{proof}

If ${\rm ex}(n,F^{p+1})<e(H(n,p,q,\lambda-1,\mathcal{B}))$,
then  $e(G)<e(H(n,p,q,\lambda-1,\mathcal{B}))$.
By Claim \ref{claim3.5}, we have $\rho(G)<\rho(H(n,p,q,\lambda-1,\mathcal{B}))$.
On the other hand, from the definition of $G$ and \eqref{align0001} we know that $\rho(H(n,p,q,0,\mathcal{B}))\leq \rho(G)$.
Therefore, Theorem \ref{theorem2.2} (i) holds.
Now consider the case ${\rm ex}(n,F^{p+1})=e(H(n,p,q,\lambda-1,\mathcal{B}))$.
By \eqref{align1001}, we have ${\rm EX}(n,F^{p+1})\subseteq \mathcal{H}(n,p,q,\lambda-1,\mathcal{B})$.
To prove Theorem \ref{theorem2.2} (ii),
it suffices to show that $G\in {\rm EX}(n,F^{p+1})$, that is, $e(G)=e(H^*)$ for any $H^*\in {\rm EX}(n,F^{p+1})$.
Suppose to the contrary that $e(G)<e(H^*)$.
By Claim \ref{claim3.5}, $\rho(G)<\rho(H^*)$,
which contradicts that $G$ is an extremal graph for ${\rm spex}(n,F^{p+1})$.
Therefore, $e(G)=e(H^*)$ and Theorem \ref{theorem2.2} (ii) holds.
\end{proof}

\begin{remark}
Combining Corollaries \ref{cor4.1}-\ref{cor4.5},
we can see that Question \ref{prob1.1} is valid for $F^{p+1}$ when $F$ is a path, a star, a cycle, or a complete graph.
Furthermore, combining Theorems \ref{theorem2.2} (ii), \ref{theorem3.1} (ii) and \ref{theorem3.2}, we can directly get Theorem \ref{thm1.001}. This means that Question \ref{prob1.1} is still valid for $F^{p+1}$ when $F$ is a non-bipartite graph, or a bipartite graph with $q<q(F)$.
However, it seems difficult to  verify the validity of Question \ref{prob1.1}  for $F^{p+1}$ when $F$ is a bipartite graph with $q=q(F)$, and so we leave this as a problem.
\end{remark}

\end{document}